\documentclass[10pt]{article}
\usepackage{amsfonts}
\usepackage{amsmath}
\usepackage{amssymb}
\usepackage{amsthm}
\usepackage{mathrsfs}
\usepackage{graphicx}
\usepackage{tabularx}
\usepackage[includemp,body={398pt,550pt},footskip=30pt,%
            marginparwidth=60pt,marginparsep=10pt]{geometry}

\numberwithin{equation}{section}  
\numberwithin{figure}{section} 
\newtheorem{proposition}{P{\scriptsize ROPOSITION}}[section]
\newtheorem{theorem}[proposition]{T{\scriptsize HEOREM}}
\newtheorem{lemma}[proposition]{L{\scriptsize  EMMA}}
\newtheorem{corollary}[proposition]{C{\scriptsize OROLLARY}}
\newtheorem{remark}{R{\scriptsize  EMARK}}[section]
\newtheorem{definition}{D{\scriptsize  EFINITION}}[section]

\newtheorem{example}{E{\scriptsize  xample}}[section]

\newtheorem{algorithm}{A{\scriptsize LGORITHM}}[section]

\setcounter{footnote}{0}

 \addtolength{\textwidth}{0.5cm}

\title {\Large{\bf  
A New Real Structure-preserving Quaternion QR Algorithm
}}
\author{Zhi-Gang Jia$^{1}$\thanks{Email: zhgjia@jsnu.edu.cn. 
Supported by  National Natural Science Foundation of China  under grant 11201193, TAPP (PPZY2015A013) and  PAPD  of Jiangsu Higher Education Institutions.
},
  Musheng Wei$^{2}$\thanks{ Email: mwei@shnu.edu.cn. Supported by  National Natural Science Foundation of China  under grant 11171289.},
  Mei-Xiang Zhao$^{3,1}$, and Yong Chen$^1$
 \\
 \\
$1.$ School of Mathematics and Statistics, Jiangsu Normal University,\\
Xuzhou 221116, P. R. China\\
$2.$ College of Mathematics and Science, Shanghai Normal University,\\
Shanghai 200234, China.\\
$3.$ School of Information and Electrical Engineering, China University \\
of Mining and Technology,
Xuzhou 221116, China
}

\date{}

\begin{document}
\maketitle

\begin{abstract}
New real structure-preserving decompositions are introduced to develop fast and robust algorithms for the (right) eigenproblem of general quaternion matrices. Under the orthogonally $JRS$-symplectic transformations, 
the Francis $JRS$-QR step and the $JRS$-QR algorithm are firstly proposed for $JRS$-symmetric matrices and then applied to calculate  the   Schur forms of quaternion matrices. A novel quaternion Givens matrix is defined and utilized to compute the QR factorization of quaternion Hessenberg matrices. An implicit double shift quaternion QR algorithm is presented with a technique for automatically choosing shifts and within real operations. Numerical experiments are provided to demonstrate the efficiency and accuracy of newly proposed  algorithms.
\end{abstract}

{\bf Key words.} structured matrices; structure-preserving method; quaternion QR algorithm; quaternion eigenvalue problem.

\section{Introduction}\label{s:intr}
Quaternion matrices play an increasing important role  in many  fields of scientific research, both in theory and applications. The topics of quaternions are viewed of interest if the result is rather different than that of real and complex cases or the method is novel. The convenience of geometric representation and the stability of calculation  make quaternions the favourite of scientists and engineers when they develop mathematical models to simulate and analysis physics phenomena.  

Quaternion was introduced to represent points in space by Sir William Rowan Hamilton on Monday 16 October 1843 in Dublin \cite{hamil67,hank80}. During the remainder of his life, Hamilton tried hard to popularize quaternions by studying and teaching them.  He founded a school of ``quaternionists'', and wrote several books to promote quaternions.  {\it Elements of Quaternions} \cite{hamil69} is his last and longest  book. The team of promoting quaternions expanded  quickly,  not only including  Hamiliton and his students. Finkelstein et al \cite{fjs79,fjss62}  built the  foundations of  quaternionic quantum mechanics; Dixon \cite{dixon94}, G\"{u}rsey and Tze \cite{gutz96}  renewed interest in
algebrization and geometrization of physical theories by
non-commutative fields; and many others.
Primarily due to their utility in describing spatial rotations, quaternions have been widely used in and not limited in computer graphics \cite{shoe85}, bioinformatics \cite{shu10},  control theory and physics  since the late 20th century.

Recently, the  book {\it Topics in Quaternion Linear Algebra} \cite{rodman14}, written by Leiba Rodam,  devotes  entirely quaternionic linear algebra and matrix analysis, consisting of two parts. In the first part, fundamental properties and constructions of quaternionic linear algebra are explained, including matrix decompositions, numerical ranges, Jordan and Kronecker canonical forms, etc. In the second part,  the canonical forms of quaternion pencils with symmetries and the exposition approaches that of a research monograph are emphasised. This book  is an excellent reference source for working mathematicians in both theoretical  and applied areas.

Because of noncommutative multiplication of quaternions,  we have two different quaternionic eigenvalues:  the left eigenvalue  and the right eigenvalue. The right eigenvalue theory of quaternion matrices parallels that of complex eigenvalues of complex matrices in some sense, but the behavior of left eigenvalues is quite unexpected \cite{zhang97} and references therein. Most of practical quaternion models require to calculate the right eigenvalues and corresponding eigenvectors  of  quaternion matrices, while the investigation of left eigenvalues is mainly driven by purely mathematical interest.  The distribution of the left and right eigenvalues of quaternion matrices has been well studied by mathematicians. For instance, Zhang \cite{zhang07} proposed the Ger\v{s}gorin type theorems for  right eigenvalues  and  left eigenvalues. On the contrast,  there  is still no  systematic approach feasible for calculating the left eigenvalues of quaternion matrices with dimensions higher than three, and there is an extreme lack of  fast and stable algorithms of computing the right eigenvalues of general quaternion matrices as well.

Non-commutativity of quaternions blocks lots of classic algorithms being directly used to solve quaternionic (right) eigenproblems.  People have two choices of computing the right eigenvalues of general quaternion matrices:  the  quaternion QR algorithm \cite{bbm89}  and the well-known real or complex counterpart method  \cite{jiang05,leo00, zhang97}. Bunse-Gerstner, Byers and Mehrmann \cite{bbm89}  made a notable contribution  on proposing the double-implicit-shift strategy  and the Francis QR algorithm for quaternion matrices, and on calculating the quaternion Schur form with quaternion unitary similarity transformations. They also proposed the  underlying theory of the quaternion QR algorithm, including the uniqueness and the preservation of the Hessenberg form,  and indicated that such algorithm is backward stable. As the second choice, the  real or complex counterpart method  equivalently transforms the quaternionic right eigenproblem  into the eigenproblem of a real (or complex) matrix with dimension expanded four (or two) times. 
Its efficiency is now challenged by the increasing dimensions of quaternion matrices from applied fields, because of expanding the necessary operation flops and   storage space by several times.  
This new trouble is due to  overlooking algebraic structures of the real (or complex) counterpart.

The real structure-preserving strategy is to develop fast and stable algorithms relying on  structures of 
 the quaternion matrix and its real counterpart and  only processing real operations.  The aim is to combine the stability of quaternion operations and the rapidity of real calculations without dimension expanding. In essence, the real structure-preserving algorithms have comparable operation  flops and storage space with  the algorithms based on quaternion operations. The multiple symmetry structures of the real counterpart 
 were introduced in \cite{jwl13} and had been applied into computing many decompositions of quaternion matrices. The  real structure-preserving tridiagonalization algorithm in \cite{jwl13} reduced  a Hermitian quaternion matrix  into a real symmetric and  tridiagonal matrix of the same order, with the eigen information preserved. A structure-preserving LU decomposition based on the structure-preserving Gauss transformation was proposed for quaternion matrices in \cite{wama13}. Four kinds of quaternion Householder based transformations were compared with each other on their computation amounts and assignment numbers in the calculation of the QRD and SVD of quaternion matrices in \cite{lwzz16}. These real structure-preserving algorithms  have comparable stability and accuracy with the quaternion-operation-based algorithms. 
To the best of our knowledge,  there are still no real structure-preserving algorithms of solving the right eigenvalue problem of non-Hermitian quaternion matrices, which is a very difficult and important problem in quaternionic linear algebra and its applications. We will propose a new real structure-preserving QR algorithm for general quaternion matrices, with costing about a quarter of  arithmetic operations and storage space of applying  the conventional QR algorithm on  their real counterparts.

This paper is organized as follows.
 In Section \ref{s:pre}, we present some properties of quaternion matrices and the real counterparts. In Section \ref{s:SPD},   we firstly propose  the structure-preserving decompositions, including $JRS$-Hessenberg, QR and Schur decompositions,   and then present the real structure-preserving  $JRS$-Hessenberg QR iteration. In Section \ref{s:QQRA}, we present a new  fast quaternion Francis QR algorithm.
 In Section \ref{s:numex}, we provide four numerical experiments. 
 Finally in Section \ref{s:conclusion} we give several concluding remarks.

\section{ Preliminaries}\label{s:pre}
In this section we present some basic results for quaternion matrices and their real counterparts. Let $ \mathbb{H}$ denote the division ring generated by $1,~i,~j$ and $k$, with identity $1$ and 
$$i^2 = j^2 = k^2 = ijk = -1.$$

\subsection{Quaternion matrices and $JRS$-symmetric matrices}\label{ss:qJRS}
A quaternion matrix  $Q \in \mathbb{H}^{m\times n}$ is of the form
$$Q=Q_0+Q_1i+Q_2j+Q_3k,~Q_0,\cdots,Q_3\in\mathbb{R}^{m\times n},$$
and its conjugate transpose is defined as $Q^*=Q_0^T-Q_1^Ti-Q_2^Tj-Q_3^Tk$. A quaternion matrix $Q$ has right linearly independent columns (or in other words, $Q$ is full of column rank)  if and only if $Qx=0$ has a unique solution $x=0$, and moreover, the columns of $Q$ are orthogonal to each other if   $Q^*Q= I$.
The  real counterpart of a quaternion matrix $Q$ is defined in \cite{jwl13} as 
\begin{equation}\label{e:realq}
   \Upsilon_Q\equiv\left[
              \begin{array}{rrrr}
                Q_0 & Q_2& Q_1 & Q_3 \\
                -Q_2 & Q_0 & Q_3 & -Q_1 \\
                -Q_1 & -Q_3 & Q_0 & Q_2 \\
                -Q_3 & Q_1 & -Q_2 & Q_0 \\
              \end{array}
            \right]\in\mathbb{R}^{4m\times 4n}.
\end{equation}
Many computational problems of quaternion matrices can be proceeded by corresponding  real counterparts, with giving a rise  of the dimension-expanding obstacle when the original quaternion matrix is huge.
Such trouble can be solved if we sufficiently apply the structures of real counterparts in the processing of calculation. So  we need to  generalize the definitions of $JRS$-symmetric and symplectic (square) matrices  in  \cite{jwl13} into rectangular matrices.  
\begin{definition}\label{d:JRSsym}
  Define three unitary matrices
\begin{equation*}\label{e:JRS}
                              \ J_n=\left[
                                \begin{matrix}
                                 0 & 0 & -I_n & 0 \\
                                  0 & 0 & 0&-I_n  \\
                                  I_n& 0 & 0 & 0 \\
                                 0 &I_n & 0&  0\\
                                \end{matrix}
                              \right],
 R_n=\left[
                                \begin{matrix}
                                 0&-I_n & 0 & 0 \\
                                  I_n & 0 & 0& 0 \\
                                  0 & 0 & 0 & I_n \\
                                 0 & 0& -I_n& 0\\
                                \end{matrix}
                              \right],
                              \ S_n=\left[
                                \begin{matrix}
                                 0& 0& 0 & -I_n \\
                                 0 & 0 &I_n  & 0 \\
                                  0 & -I_n & 0 &0 \\
                                 I_n& 0&0& 0\\
                                \end{matrix}
                              \right].\end{equation*}
 \begin{itemize}
 \item[$(1)$] A real matrix $M\in\mathbb{R}^{4m\times 4n}$ is called \emph{$JRS$-symmetric} if $J_mMJ_n^T=M$, $R_mMR_n^T=M$ and $S_mMS_n^T=M$.
\item[$(2)$] If $m\le n$,  a matrix  $O\in\mathbb{R}^{4m\times 4n}$ is called   \emph{$JRS$-symplectic} if   $OJ_nO^T=J_m$, $OR_nO^T=R_m$ and $OS_nO^T=S_m$.
\item[$(3)$]  A matrix  $W\in\mathbb{R}^{4n\times 4n}$ is called   \emph{orthogonally $JRS$-symplectic} if it is orthogonal and $JRS$-symplectic.
\end{itemize}
\end{definition}
\noindent
We can see that an $n$-by-$n$ quaternion matrix $Q$ is unitary if and only if its real counterpart $\Upsilon_Q$ is orthogonal; and $\Upsilon_Q$ is orthogonal  if and only if  it is orthogonally $JRS$-symplectic, because $\Upsilon_Q$ is surely $JRS$-symmetric.  

Notice that the set of $JRS$-symmetric matrices is closed under addition and multiplication. 
\begin{lemma}\label{l:JRSprop}
Suppose that $M\in\mathbb{R}^{4m\times 4n}$, $B\in\mathbb{R}^{4m\times 4\ell}$ and $C\in\mathbb{R}^{4\ell\times 4n}$   are $JRS$-symmetric. 
\begin{itemize}
\item[$(1)$] $M$ has a partitioning as
\begin{equation}\label{e:M}
M=\left[
                  \begin{array}{rrrr}
                 M_0 &M_2& M_1 & M_3 \\
                -M_2 & M_0 & M_3 & -M_1 \\
                -M_1 & -M_3 & M_0 & M_2 \\
                -M_3 & M_1 & -M_2 & M_0 \\
                  \end{array}
                \right].
\end{equation}
\item[$(2)$]
For any  $\alpha,\beta\in\mathbb{R}$, $\alpha M+\beta BC$ is $JRS$-symmetric.
\item[$(3)$]Moreover,   if $B$ and $C$  are  $JRS$-symplectic, then $BC$ is also $JRS$-symplectic.
\end{itemize}
\end{lemma}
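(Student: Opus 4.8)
The three defining relations in Definition \ref{d:JRSsym} are the essential tool throughout, and I would exploit two structural facts about $J_n,R_n,S_n$: each is a signed permutation matrix, hence orthogonal, so that $J_nJ_n^T=R_nR_n^T=S_nS_n^T=I_{4n}$, and the $JRS$-symmetry conditions are \emph{linear} in the matrix being tested. With these in hand, parts $(2)$ and $(3)$ become short algebraic identities, while part $(1)$ is a structural characterization that carries the real computational weight.

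For part $(1)$ the plan is to partition $M$ into a $4\times 4$ array of blocks $M_{ab}\in\mathbb{R}^{m\times n}$, $1\le a,b\le 4$, and to read off what each of the three conditions $J_mMJ_n^T=M$, $R_mMR_n^T=M$ and $S_mMS_n^T=M$ says. Since $J_m$ (acting on the left) merely permutes and sign-flips the block rows while $J_n^T$ (acting on the right) permutes and sign-flips the block columns, each equation reduces to a list of identifications among the sixteen blocks together with sign constraints. Imposing all three conditions simultaneously forces these identifications to collapse the sixteen blocks down to four independent ones, which I would label $M_0,M_1,M_2,M_3$, and the surviving sign pattern is exactly the array displayed in (\ref{e:M}). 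Conceptually this is the statement that $\{J_n,R_n,S_n\}$ furnishes a real representation of the imaginary quaternion units, so that the matrices intertwining all three (in the two-sided sense encoded by the conjugations) form a four-real-parameter family, precisely the real counterparts $\Upsilon_Q$ of Section \ref{ss:qJRS}.

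For part $(2)$, linearity of the three conditions handles the $\alpha M$ term at once, so it remains to treat $BC$. Here I would insert $I_{4\ell}=J_\ell^T J_\ell$ and compute
\[
J_m(BC)J_n^T=(J_mBJ_\ell^T)(J_\ell CJ_n^T)=B\,C=BC,
\]
using the $JRS$-symmetry of $B$ and $C$; the identical telescoping with $R_\ell^T R_\ell=I$ and $S_\ell^T S_\ell=I$ gives the other two relations, so $BC$, and hence $\alpha M+\beta BC$, is $JRS$-symmetric. Part $(3)$ is the analogous computation for the symplectic relations: assuming $m\le\ell\le n$ so that $BC\in\mathbb{R}^{4m\times 4n}$ is of admissible shape, I would write
\[
(BC)J_n(BC)^T=B\,(CJ_nC^T)\,B^T=B\,J_\ell\,B^T=J_m,
\]
and likewise for $R$ and $S$, which is exactly the $JRS$-symplectic condition for $BC$.

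The genuinely laborious step is the block bookkeeping in part $(1)$: one must verify that the three sets of sign-and-permutation constraints are mutually consistent and that their common solution admits no free parameters beyond $M_0,\dots,M_3$. I expect this to be the main obstacle, though it is purely mechanical once the action of $J_m,R_m,S_m$ on block rows and columns is tabulated. Parts $(2)$ and $(3)$ carry no real difficulty beyond recalling that $J_\ell,R_\ell,S_\ell$ are orthogonal and keeping the dimension chain $m\le\ell\le n$ straight.
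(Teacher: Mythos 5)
Your proposal is correct and takes essentially the same route as the paper: for part $(3)$ the paper gives exactly your telescoping computation $(BC)J_n(BC)^T=B(CJ_nC^T)B^T=BJ_\ell B^T=J_m$ (and likewise for $R$ and $S$), while parts $(1)$ and $(2)$ are dismissed there as ``direct calculation,'' which is precisely the block bookkeeping and the insertion of $I_{4\ell}=J_\ell^TJ_\ell$ that you spell out.
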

\begin{proof} We only prove the item $(3)$, because  items $(1)$ and $(2)$ can be proved by  direct calculation. 
Since $B$ and $C$ are $JRS$-symplectic, we have 
$$BJ_\ell B^T=J_m, ~BR_\ell B^T=R_m, ~BS_\ell B^T=S_m,$$
and 
$$CJ_n C^T=J_\ell, ~CR_n C^T=R_\ell, ~CS_n C^T=S_\ell.$$
Then 
$$(BC)J_n (BC)^T=B(CJ_n C^T)B^T=BJ_\ell B^T=J_m,\ ~ $$
$$(BC)R_n (BC)^T=B(CR_n C^T)B^T=BR_\ell B^T=R_m,$$
$$(BC)S_n (BC)^T=B(CS_n C^T)B^T=BS_\ell B^T=S_m.\ ~$$
According to the second item in Definition \ref{d:JRSsym},  $BC$ is $JRS$-symplectic.
\end{proof}

With the real counterpart as a bridge,  many properties of quaternion matrices can be obtained through studying $JRS$-symmetric matrices. This is based on  an important discovery: 
\begin{theorem}\label{t:JRSrealq}
A matrix $M\in\mathbb{R}^{4m\times 4n}$ is $JRS$-symmetric if and only if $M$ is a real counterpart of a quaternion matrix.
\end{theorem}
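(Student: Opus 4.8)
The plan is to reduce the theorem to the single equivalence ``$M$ is $JRS$-symmetric $\iff$ $M$ has the block form \eqref{e:M}'', and then dispatch each direction separately. The bridge is a trivial but crucial pattern-matching: a matrix $M\in\mathbb{R}^{4m\times 4n}$ has the partitioning \eqref{e:M} with blocks $M_0,M_1,M_2,M_3\in\mathbb{R}^{m\times n}$ if and only if $M=\Upsilon_Q$ for the quaternion matrix $Q=M_0+M_1i+M_2j+M_3k$, since the sign-and-position layout of the $4\times4$ block array in \eqref{e:M} coincides exactly with that prescribed by the definition \eqref{e:realq}. Hence ``being a real counterpart'' and ``having the form \eqref{e:M}'' are literally the same condition, and it remains only to identify this condition with $JRS$-symmetry.

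For the forward implication I would simply invoke Lemma \ref{l:JRSprop}(1), which already asserts that every $JRS$-symmetric $M$ admits the partitioning \eqref{e:M}; reading off the blocks $M_0,\dots,M_3$ and forming $Q=M_0+M_1i+M_2j+M_3k$ then exhibits $M=\Upsilon_Q$. For the converse I would start from an arbitrary $M$ of the block form \eqref{e:M} and verify the three defining relations $J_mMJ_n^T=M$, $R_mMR_n^T=M$ and $S_mMS_n^T=M$ directly. Because $J_n$, $R_n$, $S_n$ are signed block-permutation matrices, each conjugation $X\mapsto J_mXJ_n^T$ (and likewise for $R$ and $S$) acts on the $4\times4$ array of blocks of $M$ by a fixed permutation of block-rows and block-columns together with prescribed sign changes; one then checks that the particular array in \eqref{e:M} is invariant under each of the three actions.

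The main obstacle here is not conceptual but purely clerical: one must keep the sign pattern and the transposes $J_n^T,R_n^T,S_n^T$ straight through the block multiplications, so that the permuted-and-signed array returns to \eqref{e:M} block for block rather than to some variant with a flipped sign. I would carry out the computation for one relation, say $J_mMJ_n^T=M$, entry-block by entry-block, and then remark that the relations for $R$ and $S$ follow by the identical mechanical calculation. Together the two directions give the equivalence, and composing it with the pattern-matching of the first paragraph yields the theorem.
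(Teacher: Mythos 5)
Your proposal is correct and takes essentially the same route as the paper, whose entire proof is the single sentence ``can be proved by straightforward computation'': your pattern-matching of the block form \eqref{e:M} with the definition \eqref{e:realq}, the appeal to Lemma \ref{l:JRSprop}(1) for the forward direction, and the signed block-permutation check of $J_mMJ_n^T=M$, $R_mMR_n^T=M$, $S_mMS_n^T=M$ for the converse are exactly the computation the paper leaves implicit. Indeed, your write-up is more informative than the paper's, and the block verification you outline does go through as claimed.
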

\begin{proof}The theorem can be proved by straightforward computation.
\end{proof}

The basic quaternion operations can be proceeded only by real arithmetic based on Lemma \ref{l:JRSprop} and Theorem \ref{t:JRSrealq}.   For instance, suppose that  $Q,~M,~N\in\mathbb{H}^{m\times n},~A\in\mathbb{H}^{m\times \ell},~B\in\mathbb{H}^{\ell \times n}$,  and $\alpha,~\beta\in\mathbb{R}$, then 
\begin{itemize}
\item   $Q=\alpha M+\beta N$ if and only if   $\Upsilon_Q=\alpha\Upsilon_ M+\beta\Upsilon_N$ (\cite{jwl13});
\item   $Q=\alpha AB$ if and only if   $\Upsilon_Q=\alpha\Upsilon_ A\Upsilon_B$ (\cite{jwl13});
\item   $\Upsilon_{\alpha Q^*}=\alpha(\Upsilon_Q)^T$;
\item  $\Upsilon_{ Q^{-1}}=(\Upsilon_Q)^{-1}$ if $Q$ is invertible;
\item  $2\|Q\|_F=\|\Upsilon_Q\|_F$ (\cite{lwzz16}),   $\|Q\|_2=\|\Upsilon_Q\|_2$, and  $\rho(Q)=\rho(\Upsilon_Q)$.
\end{itemize}
\subsection{The quaternion eigenvalue problems}
A pair $(x, \lambda)$ with nonzero vector $x\in\mathbb{H}^n$ and $\lambda\in\mathbb{H}$ is called the right (left) eigenpair of  a quaternion matrix $Q\in\mathbb{H}^{n\times n}$ if
\begin{equation}\label{eigprob}
Qx= x\lambda ~~(Qx= \lambda x).
\end{equation}
The existence of right eigenvalues for any quaternion matrix was first proved by Berenner \cite{brenner51}. The left eigenvalue problem was raised by Cohn \cite{cohn77}  and the existence of left eigenvalues for any quaternion matrix was proved by Wood \cite{wood85} using a topological approach.  
Every  $n$-by-$n$ quaternion matrix has at least one left eigenvalue in $\mathbb{H}$  \cite{wood85}, and however has exactly $n$ right eigenvalues, which are complex numbers with nonnegative imaginary parts \cite{brenner51,lee49}. 
Such right eigenvalues are called standard eigenvalues in \cite{zhang97}.
 Generally, left and right eigenvalues have no strong relation  to each other.  
 But they coincides when $Q$ is a real matrix. 
 Since the right eigenvalues  have been well studied in theory and are more available in many applications,   we  only study the right eigenvalues of quaternion matrices  and use   ``eigenvalue'' to indicate the right eigenvalue for simplicity in the rest of this paper. 

By adopting quaternion scalar products in $\mathbb{H}^n$, we find states in one-to-one correspondence with unit rays of the form
$v = \{x \beta\}$, where $x$ is a normalized vector and $\beta$ is a quaternion phase of unity magnitude.
The
state vector, $x\beta$, corresponding to the same physical state $x$, is an eigenvector with
eigenvalue $\overline{\beta}\lambda \beta$, 
$Q(x\beta)=(x\beta)(\overline{\beta}\lambda \beta).$
For real values of $\lambda$, we find only one eigenvalue, otherwise we can find an infinite eigenvalue spectrum
$[\lambda]=\{\lambda, \overline{\beta}_1\lambda \beta_1, \cdots, \overline{\beta}_\ell\lambda \beta_\ell, \cdots \}$
with $\beta_\ell$ unitary quaternions, called the equivalence class containing $\lambda$.
 The related set of eigenvectors
$\{x, x \beta_1, \cdots , x \beta_\ell, \cdots\}$
represents a ray.   Any two quaternions are similar if and only if their real parts and  modules of imaginary parts are respectively equivalent \cite[Theorem 2.2]{zhang97}. If $\lambda$ is not real then $[\lambda]$ contains only two complex numbers that are a conjugate pair.  
 In fact, if $\lambda=a+bi+cj+dk$ with $c^2+d^2\ne 0$ then we can choose $\beta=\alpha/|\alpha|$ with $\alpha=b+\sqrt{b^2+c^2+d^2}-dj+ck$ such that $\lambda_c=\overline{\beta}\lambda\beta=a+\sqrt{b^2+c^2+d^2}i\in[\lambda].$
For this state the right eigenvalue equation in \eqref{eigprob} becomes
\begin{equation}\label{eigprobcomplex}Qv = v\lambda_c\end{equation}
with $v\in\mathbb{H}^n$ is a representative ray and $\lambda_c\in\mathbb{C}$ is the corresponding standard eigenvalue. We will focus on  computing  the standard eigenvalues of quaternion matrices.

The following important results are recalled from \cite{rodman14} and  \cite{zhang97}:
\begin{theorem}[\cite{rodman14,zhang97}]\label{t:decompositionq}
 Let $Q\in\mathbb{H}^{n\times n}$. Then:
 \begin{itemize}
\item  (Schur's triangularization theorem) there exists a unitary $U\in\mathbb{H}^{n\times n}$ such that $U^*QU$
  is upper triangular with  complex diagonal entries;
\item  if $Q$ is Hermitian, then there exists a unitary $U\in\mathbb{H}^{n\times n}$ such that $U^*QU$ is
diagonal and real;
\item  if $Q$ is skew Hermitian, then there exists a unitary $U\in\mathbb{H}^{n\times n}$ such that $U^*QU$
is diagonal complex matrix with purely imaginary nonzero entries;
\item  if $Q$ is unitary, then there exists a unitary $U\in\mathbb{H}^{n\times n}$ such that $U^*QU$ is
diagonal and consists of unit complex numbers.
\end{itemize}
\end{theorem}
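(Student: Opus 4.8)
The plan is to establish item (1) by induction on the order $n$ and then to read off items (2)--(4) as immediate consequences, exploiting that a unitary similarity preserves each of the three symmetry types while an upper triangular matrix carrying such a symmetry is forced to be diagonal.

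For item (1), the base case $n=1$ already requires the key idea: a single quaternion $q$ need not be complex, but by the equivalence-class argument preceding \eqref{eigprobcomplex} (see also \cite[Theorem~2.2]{zhang97}) there is a unit quaternion $\beta$ with $\overline{\beta}q\beta\in\mathbb{C}$, so $U=[\beta]$ triangularizes $[q]$ with a complex diagonal entry. For the inductive step I would first invoke the existence of a right eigenvalue \cite{brenner51} to obtain a right eigenpair $(x,\lambda)$ with $Qx=x\lambda$, and then use the same equivalence-class reduction to replace $\lambda$ by a standard eigenvalue: there is a unit quaternion $\beta$ such that $v=x\beta/\|x\beta\|$ is a unit vector satisfying $Qv=v\lambda_c$ with $\lambda_c\in\mathbb{C}$ as in \eqref{eigprobcomplex}. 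Because $\mathbb{H}$ is a division ring, a right Gram--Schmidt (or Householder) process extends $v$ to a unitary matrix $U_1=[\,v\ \ V\,]\in\mathbb{H}^{n\times n}$ whose first column is $v$. Using $Qv=v\lambda_c$ and $V^*v=0$ one computes
$$U_1^*QU_1=\begin{bmatrix}\lambda_c & r\\ 0 & Q_1\end{bmatrix},\qquad Q_1=V^*QV\in\mathbb{H}^{(n-1)\times(n-1)},$$
where $r$ is a quaternion row vector. Applying the induction hypothesis to $Q_1$ yields a unitary $\widetilde U$ with $\widetilde U^*Q_1\widetilde U$ upper triangular and complex on the diagonal; setting $U=U_1\,\mathrm{diag}(1,\widetilde U)$, which is unitary as a product of unitaries, makes $U^*QU$ upper triangular with complex diagonal, completing the induction.

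For items (2)--(4) I would fix the triangularization $T=U^*QU$ from item (1) and note that $T^*=U^*Q^*U$. If $Q$ is Hermitian then $T^*=T$, so the upper triangular matrix $T$ is Hermitian and hence diagonal, and each complex diagonal entry satisfies $\overline{t_{ii}}=t_{ii}$, forcing it to be real. If $Q$ is skew-Hermitian then $T^*=-T$, so again $T$ is diagonal and a complex entry $t_{ii}=a+bi$ with $\overline{t_{ii}}=-t_{ii}$ must have $a=0$, i.e. be purely imaginary. If $Q$ is unitary then $T^*T=I$, a unitary upper triangular matrix is diagonal, and $|t_{ii}|=1$ together with $t_{ii}\in\mathbb{C}$ gives a unit complex number.

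The routine parts are the block computation and the inheritance of symmetry under unitary similarity; the hard part will be the two quaternion-specific points that do not appear in the complex case, namely ensuring each diagonal entry is genuinely complex (handled by always selecting the standard eigenvalue $\lambda_c$ from its equivalence class) and verifying that orthonormal extension and the notion of unitarity ($U^*U=I$, together with right linear independence) behave correctly over the noncommutative ring $\mathbb{H}$. One could alternatively try to derive the result from a real Schur form of the real counterpart $\Upsilon_Q$, but recovering the complex-diagonal triangular quaternion form from the real counterpart is less direct, so I would prefer the intrinsic inductive argument above.
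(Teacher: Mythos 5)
The paper offers no proof of this theorem: it is stated as a recalled result and cited directly to \cite{rodman14,zhang97}, so there is no internal argument to compare yours against. Your proof is correct and is essentially the standard one given in those references --- induction using the existence of a right eigenpair \cite{brenner51}, replacement of the eigenvalue by its complex standard representative $\lambda_c$ via a unit quaternion (exactly the equivalence-class reduction the paper recalls before \eqref{eigprobcomplex}), orthonormal extension of $v$ over the division ring $\mathbb{H}$, deflation to $Q_1=V^*QV$, and then deducing items (2)--(4) from $T^*=T$, $T^*=-T$, or $T^*T=I$ for an upper triangular $T$, where in the unitary case one should just note that column orthonormality reads $\overline{t_{11}}\,t_{12}=0$ with the quaternion factors in that order.
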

Define the quaternion Jordan block as 
$$\mathscr{J}_m(\lambda)= \left[
              \begin{array}{rrrrr}
                \lambda & 1& 0 & \cdots &0 \\
                0 & \lambda & 1&\cdots & 0 \\
                \vdots & \vdots & \ddots&\ddots  & \vdots \\
                  \vdots &   \vdots&  &  \lambda&1  \\
                0 &  0& \cdots &0 &\lambda \\
              \end{array}
            \right]\in\mathbb{H}^{m\times m}.$$
\begin{theorem}[\cite{rodman14,zhang97}]\label{t:jordan}
 Let $Q\in\mathbb{H}^{n\times n}$. 
 Then there exists an invertible $X\in\mathbb{H}^{n\times n}$ such that 
\begin{equation}\label{e:jordan}X^{-1}QX=\mathscr{J}_{m_1}(\lambda_1)\oplus\cdots \mathscr{J}_{m_p}(\lambda_p),~\lambda_1,\cdots,\lambda_p\in \mathbb{H}.\end{equation}
The form \eqref{e:jordan} is uniquely determined by $Q$ up to arbitrary permutation of diagonal blocks 
and up to  a replacement of $\lambda_1,\cdots,\lambda_p$ with $\hat{\lambda}_1,\cdots,\hat{\lambda}_p$ within the diagonal blocks 
where  $\hat{\lambda}_s\in [\lambda_s],~s=1,\cdots,p$.
\end{theorem}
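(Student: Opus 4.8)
The plan is to transfer the problem to classical Jordan theory over $\mathbb{C}$ through a complex representation of $\mathbb{H}^{n\times n}$, and then to descend the resulting complex similarity to a quaternion one. First I would normalize the eigenvalues. By the discussion preceding \eqref{eigprobcomplex}, every quaternion $\lambda$ is similar through a unit quaternion $\beta$ to a standard representative $\hat\lambda\in\mathbb{C}$ with nonnegative imaginary part; conjugating $\mathscr{J}_m(\lambda)$ by $\beta I_m$ replaces the diagonal entries $\lambda$ by $\bar\beta\lambda\beta=\hat\lambda$ and leaves the superdiagonal entries equal to $\bar\beta\beta=1$, so $\mathscr{J}_m(\lambda)$ is $\mathbb{H}$-similar to $\mathscr{J}_m(\hat\lambda)$. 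Hence it suffices to establish \eqref{e:jordan} with complex standard $\lambda_s$, and this same computation later yields the freedom $\hat\lambda_s\in[\lambda_s]$ in the uniqueness claim.

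For existence, write $Q=A+Bj$ with $A,B\in\mathbb{C}^{n\times n}$ and introduce the complex adjoint $\chi_Q=\left[\begin{smallmatrix}A&B\\-\bar B&\bar A\end{smallmatrix}\right]\in\mathbb{C}^{2n\times 2n}$, an injective $\mathbb{R}$-algebra homomorphism with $\chi_{PQ}=\chi_P\chi_Q$ and $\chi_{Q^{-1}}=(\chi_Q)^{-1}$ (the complex analogue of the dictionary after Theorem \ref{t:JRSrealq}). A direct computation gives $\chi_{\mathscr{J}_m(\lambda)}=\Gamma_m(\lambda)\oplus\Gamma_m(\bar\lambda)$ for complex $\lambda$, where $\Gamma_m(\cdot)$ denotes the ordinary complex Jordan block, so the quaternion Jordan structure of $Q$ and the complex Jordan structure of $\chi_Q$ determine each other once the latter is known to be conjugate-symmetric. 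I would obtain that symmetry from the conjugate-linear involution $\tau$ on $\mathbb{C}^{2n}$ induced by right multiplication by $j$, which satisfies $\tau^2=-I$ and $\tau\chi_Q=\chi_Q\tau$: hence $\tau$ maps $\ker(\chi_Q-\lambda I)^k$ conjugate-linearly and bijectively onto $\ker(\chi_Q-\bar\lambda I)^k$. For $\lambda\notin\mathbb{R}$ this forces the Jordan blocks at $\lambda$ and $\bar\lambda$ to agree in number and size, while for $\lambda\in\mathbb{R}$ the generalized eigenspace carries a quaternionic structure on which $\chi_Q-\lambda I$ acts quaternion-linearly, so its blocks occur in equal-size pairs. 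In all cases the complex Jordan form of $\chi_Q$ equals $\bigoplus_s\big(\Gamma_{m_s}(\lambda_s)\oplus\Gamma_{m_s}(\bar\lambda_s)\big)$ with $\mathrm{Im}\,\lambda_s\ge 0$, which is precisely $\chi$ applied to the right-hand side of \eqref{e:jordan}.

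It remains to descend and to settle uniqueness. Given a complex similarity $P^{-1}\chi_Q P=\chi_D$ with $D=\bigoplus_s\mathscr{J}_{m_s}(\lambda_s)$, I must upgrade $P$ to an intertwiner of the form $\chi_X$: since both $\chi_Q$ and $\chi_D$ commute with $\tau$, the matrix $\tau P\tau^{-1}$ is also an intertwiner, and a suitable real-linear combination of $P$ and $\tau P\tau^{-1}$ commutes with $\tau$ and therefore lies in the image of $\chi$, giving the required invertible $X\in\mathbb{H}^{n\times n}$. For uniqueness, applying $\chi$ to any decomposition \eqref{e:jordan} shows that the complex Jordan form of $\chi_Q$ equals $\bigoplus_s(\Gamma_{m_s}(\lambda_s)\oplus\Gamma_{m_s}(\bar\lambda_s))$; since that form is a $\mathbb{C}$-similarity invariant of $\chi_Q$ and hence of $Q$, the sizes $m_s$ and the conjugate pairs $\{\lambda_s,\bar\lambda_s\}$, equivalently the classes $[\lambda_s]$ together with the sizes, are determined by $Q$. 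Combined with the first step this yields uniqueness up to permutation of blocks and replacement $\hat\lambda_s\in[\lambda_s]$.

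The main obstacle is the descent step: turning an arbitrary complex intertwiner into a quaternion-structure-preserving one that is still invertible. The linear combination above automatically commutes with $\tau$, but keeping it invertible requires a genericity (Zariski-density) argument on the space of intertwiners; the same careful use of $\tau$, equivalently of the $JRS$-symmetry of the real counterpart $\Upsilon_Q$, is also what makes the real-eigenvalue blocks pair up in the existence step.
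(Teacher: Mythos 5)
The paper offers no proof of Theorem \ref{t:jordan} at all: it is quoted as known background from \cite{rodman14,zhang97}, so there is no internal argument to compare yours against, only the proofs in those references. Your proof --- normalizing each eigenvalue to a standard complex representative by conjugating the block with a unit quaternion, passing to the complex adjoint $\chi_Q=\left[\begin{smallmatrix}A&B\\-\bar{B}&\bar{A}\end{smallmatrix}\right]$, forcing the conjugate-paired Jordan structure of $\chi_Q$ via the conjugate-linear map $\tau$ (with the even-multiplicity argument at real eigenvalues), and descending the complex similarity to a quaternion one by picking an invertible $\tau$-fixed intertwiner --- is essentially the standard route taken in those cited sources, and it is correct; in particular you correctly flag and resolve the one delicate point, namely that invertibility of a $\tau$-fixed intertwiner must be secured by a genericity (Zariski-density) argument on the real form of the intertwiner space rather than by taking the naive symmetrization $P+\tau P\tau^{-1}$, which can be singular.
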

The (right) eigenvalues are continuous functions of the quaternion matrix.
\begin{theorem}[\cite{rodman14}]
 Let $Q\in\mathbb{H}^{n\times n}$ and let $\lambda_1,\ldots,\lambda_s$ be all the distinct eigenvalues of $Q$ in the closed upper complex half-plane $\mathbb{C}_+$. Then for every $\epsilon>0$, there exists $\delta>0$ such that if $\Delta Q\in\mathbb{H}^{n\times n}$ satisfies $\|\Delta Q\|<\delta$, then the eigenvalues of $Q+\Delta Q$ are contained in the union 
 $$\bigcup_{t=1}^s\{z\in \mathbb{C}_+:~|z-\lambda_t|<\epsilon\}.$$ 
\end{theorem}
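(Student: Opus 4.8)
The plan is to transfer the problem to the real counterpart $\Upsilon_Q\in\mathbb{R}^{4n\times 4n}$, where the classical continuity of matrix eigenvalues applies, and then translate back. The bridge is the identification of the standard eigenvalues of a quaternion matrix with the eigenvalues of its real counterpart lying in the closed upper half-plane $\mathbb{C}_+$. First I would establish this correspondence. Suppose $\lambda_c=a+bi\in\mathbb{C}_+$ is a standard eigenvalue of $Q$ with eigenvector $v$, so that $Qv=v\lambda_c$. Applying $\Upsilon$ together with the multiplication rule $\Upsilon_{AB}=\Upsilon_A\Upsilon_B$ to both sides gives $\Upsilon_Q\Upsilon_v=\Upsilon_v\Upsilon_{\lambda_c}$, where $\Upsilon_v\in\mathbb{R}^{4n\times 4}$ and $\Upsilon_{\lambda_c}\in\mathbb{R}^{4\times 4}$ is the real counterpart of the scalar $\lambda_c$. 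Using the listed identities $\Upsilon_{Q^*}=\Upsilon_Q^T$ and $\Upsilon_{AB}=\Upsilon_A\Upsilon_B$ one checks $\Upsilon_v^T\Upsilon_v=\Upsilon_{v^*v}=|v|^2 I_4$, so $\Upsilon_v$ has full column rank; and a direct computation shows $\Upsilon_{\lambda_c}$ has eigenvalues $a\pm bi$, each of multiplicity two. Hence the column space of $\Upsilon_v$ is a four-dimensional $\Upsilon_Q$-invariant subspace on which $\Upsilon_Q$ is conjugate to $\Upsilon_{\lambda_c}$, and in particular $\lambda_c\in\sigma(\Upsilon_Q)\cap\mathbb{C}_+$.

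Conversely, transporting the quaternion Jordan form of Theorem~\ref{t:jordan} through $\Upsilon$ (using $\Upsilon_{AB}=\Upsilon_A\Upsilon_B$ and $\Upsilon_{Q^{-1}}=(\Upsilon_Q)^{-1}$) shows that $\Upsilon_Q$ is similar to a direct sum of real counterparts of quaternion Jordan blocks, whose eigenvalues are exactly the standard eigenvalues $\lambda_t$ and their conjugates $\overline{\lambda_t}$. Consequently $\sigma(\Upsilon_Q)\cap\mathbb{C}_+=\{\lambda_1,\dots,\lambda_s\}$, and since $\Upsilon_Q$ is real, $\sigma(\Upsilon_Q)$ is the conjugate-symmetric multiset obtained by adjoining the conjugates $\overline{\lambda_t}$. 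Next I would invoke the classical fact that the eigenvalues of a real matrix, counted as a multiset in $\mathbb{C}$, depend continuously on its entries, which follows from the continuous dependence of the roots of the characteristic polynomial on its coefficients. Combining this with the norm identity $\|\Upsilon_{\Delta Q}\|_2=\|\Delta Q\|_2$ from Section~\ref{ss:qJRS}, so that $\Upsilon_{Q+\Delta Q}=\Upsilon_Q+\Upsilon_{\Delta Q}$ is a perturbation of $\Upsilon_Q$ whose size is controlled by $\|\Delta Q\|$, yields: for every $\epsilon>0$ there is $\delta>0$ such that $\|\Delta Q\|<\delta$ forces every eigenvalue of $\Upsilon_{Q+\Delta Q}$ to lie within $\epsilon$ of $\sigma(\Upsilon_Q)=\{\lambda_t,\overline{\lambda_t}\}_t$. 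As the eigenvalues of $Q+\Delta Q$ are precisely the elements of $\sigma(\Upsilon_{Q+\Delta Q})\cap\mathbb{C}_+$, each of them lies within $\epsilon$ of some $\lambda_t$ or of some $\overline{\lambda_t}$.

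The one delicate point, and the step I expect to require care, is ruling out that a perturbed standard eigenvalue $z\in\mathbb{C}_+$ is matched to a conjugate $\overline{\lambda_t}$ in the open lower half-plane rather than to $\lambda_t$ itself. Because the conclusion for a given $\epsilon$ is implied by the conclusion for any smaller $\epsilon'$ (a smaller disk is contained in the larger), it suffices to prove the statement for all sufficiently small $\epsilon$. Concretely, taking $\epsilon<\min\{\operatorname{Im}\lambda_t:\operatorname{Im}\lambda_t>0\}$ makes the disk $\{z:|z-\overline{\lambda_t}|<\epsilon\}$ disjoint from $\mathbb{C}_+$ whenever $\lambda_t$ is non-real, so a point $z\in\mathbb{C}_+$ within $\epsilon$ of $\sigma(\Upsilon_Q)$ must in fact lie within $\epsilon$ of some $\lambda_t\in\mathbb{C}_+$; the real $\lambda_t$ need no special treatment, as they already belong to $\mathbb{C}_+$. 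This places every eigenvalue of $Q+\Delta Q$ in $\bigcup_{t=1}^s\{z\in\mathbb{C}_+:|z-\lambda_t|<\epsilon\}$, which completes the proof.
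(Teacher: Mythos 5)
Your proof is correct, but note that the paper itself does not prove this statement at all: it is recalled verbatim from Rodman's book \cite{rodman14} as background material, so there is no in-paper argument to compare against. Your argument is a legitimate self-contained proof, and it is very much in the spirit of the rest of the paper: you route everything through the real counterpart $\Upsilon_Q$, using exactly the identities the paper lists in Section \ref{ss:qJRS} ($\Upsilon_{AB}=\Upsilon_A\Upsilon_B$, $\Upsilon_{Q^*}=\Upsilon_Q^T$, $\|\Upsilon_{\Delta Q}\|_2=\|\Delta Q\|_2$, linearity). The three ingredients are all sound: (i) the forward correspondence $Qv=v\lambda_c\Rightarrow\Upsilon_Q\Upsilon_v=\Upsilon_v\Upsilon_{\lambda_c}$ with $\Upsilon_v^T\Upsilon_v=\|v\|^2I_4$, giving $\lambda_c\in\sigma(\Upsilon_Q)$; (ii) the converse via the quaternion Jordan form (Theorem \ref{t:jordan}), which identifies $\sigma(\Upsilon_Q)$ with $\{\lambda_t,\overline{\lambda_t}\}_{t=1}^s$ — here your phrase ``similar to a direct sum'' is right, though strictly the real counterpart of a direct sum is only permutation-similar to the direct sum of the real counterparts, because of the interleaved block layout in \eqref{e:realq}; and (iii) classical root-continuity of the characteristic polynomial of the real matrix $\Upsilon_{Q+\Delta Q}$. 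The point you flag as delicate — that a perturbed standard eigenvalue in $\mathbb{C}_+$ might be matched to a conjugate $\overline{\lambda_t}$ in the open lower half-plane — is indeed the only trap in this reduction, and your fix (reduce to $\epsilon<\min\{\operatorname{Im}\lambda_t:\operatorname{Im}\lambda_t>0\}$, so the $\epsilon$-disks about non-real conjugates miss $\mathbb{C}_+$ entirely, with real $\lambda_t$ needing no care) is complete and handles the vacuous-minimum case implicitly. What your approach buys, compared with simply citing \cite{rodman14}, is a proof that uses only machinery already present in the paper, reinforcing the paper's theme that spectral questions about quaternion matrices reduce to questions about their $JRS$-symmetric real counterparts.
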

\noindent
Recall that the eigenvalues in the closed upper complex half-plane $\mathbb{C}_+$ of a quaternion matrix is called  standard eigenvalues. 

For any two different $n$-dimensional  quaternion vectors $x,y$, if $\|x\|=\|y\|$ and $y^*x\in\mathbb{R}$, there exists a Householder matrix $I-2\omega\omega^*$ with $\omega=(y-x)/\|y-x\|$  maps $y$ to $x$ \cite{bbm89}.  Applying the Householder based transformations, we can calculate the QR factorization of quaternion matrix $A\in\mathbb{H}^{n\times n}$,
 i.e.,  $A=QR$, where $Q\in\mathbb{H}^{n\times n}$ is unitary and $R\in\mathbb{H}^{n\times n}$ is upper triangular \cite{buns86}.  As a milestone work,  Bunse-Gerstner, Byers and Mehrmann in \cite{bbm89}  proposed the practical QR algorithm to calculate the Schur decomposition of a quaternion matrix.  
The bump chasing,  double implicit shift method of the Francis QR iteration \cite{gova13,stewart01} were also carried over to the quaternion case with explicit algorithms listed in the appendix of \cite{bbm89}.
 The quaternion QR algorithm (\cite[Algorithm A5]{bbm89}) is suitable for computing the Schur decomposition of a general quaternion matrix. Unfortunately quaternion arithmetic is quite expensive and to be avoided at all possible. 
 We will show that there is a real equivalent of the Schur form and that the QR algorithm can be adapted to compute it in real arithmetic.

The way to  combine the stability of quaternion operations and the rapidity of real calculations is to  develop  real structure-preserving  algorithms based on the  algebraic symmetry properties  of the real counterpart.  
We find that the decompositions of quaternion matrices can be put into effect by  the $JRS$-symmetry-preserving  transformations of their real counterparts, and meanwhile, the  accompanying dimension-expanding problem caused by the real counterpart method will vanish. This motivates us to develop the structure-preserving Hessenberg reduction and the real Schur form of $JRS$-symmetric matrices at first, and then design a new real structure-preserving Francis QR algorithm for quaternion matrices, which is expected to be fast and strongly backward stable. We emphasize that the real counterpart will not be generated in the newly proposed 
algorithms,  and hence the operations will be directly applied on the real part and three imaginary parts of the quaternion matrix.

\section{The structure-preserving methods}\label{s:SPD}
In this section, we propose the structure-preserving Hessenberg, QR and Schur decompositions of $JRS$-symmetric matrices and  the real structure-preserving $JRS$-QR algorithm.  

Firstly, we recall the fact that orthogonally  $JRS$-symplectic equivalence transformations can preserve  the $JRS$-symmetry \cite{jwl13}. From the second term of  Definition \ref{d:JRSsym}, straightforward calculation indicates that  every orthogonally $JRS$-symplectic matrix $W\in\mathbb{R}^{4n\times 4n}$  has the block structure
\begin{equation}\label{e:w}
W=\left[
              \begin{array}{rrrr}
                W_0 & W_2& W_1 & W_3 \\
                -W_2 & W_0 & W_3 & -W_1 \\
                -W_1 & -W_3 & W_0 & W_2 \\
                -W_3 & W_1 & -W_2 & W_0 \\
              \end{array}
            \right], W_1, \cdots, W_3\in\mathbb{R}^{n\times n}.
\end{equation}
An example of orthogonally $JRS$-symplectic matrix  is the  generalized symplectic Givens rotation defined as
\begin{equation}\label{e:gl}
G_\ell=\left[\begin{smallmatrix}
I_{\ell-1},&           0,& 0,& 0,&           0,& 0,& 0,&           0,& 0,& 0,&            0,& 0\\
0,&  \alpha_0,& 0,& 0,&  \alpha_2,& 0,& 0,&   \alpha_1,& 0,& 0,&   \alpha_3,& 0\\
0,&           0,& I_{n-\ell},& 0,&           0,& 0,& 0,&           0,& 0,& 0,&            0,& 0\\
0,&           0,& 0,& I_{\ell-1},&           0,& 0,& 0,&           0,& 0,& 0,&            0,& 0\\
0,& -\alpha_2,& 0,& 0,&  \alpha_0,& 0,& 0,&  \alpha_3,& 0,& 0,&   -\alpha_1,& 0\\
0,&           0,& 0,& 0,&           0,& I_{n-\ell},& 0,&           0,& 0,& 0,&            0,& 0\\
0,&           0,& 0,& 0,&           0,& 0,& I_{\ell-1},&           0,& 0,& 0,&            0,& 0\\
0,&  -\alpha_1,& 0,& 0,& -\alpha_3,& 0,& 0,&  \alpha_0,& 0,& 0,&   \alpha_2,& 0\\
0,&           0,& 0,& 0,&           0,& 0,& 0,&           0,& I_{n-\ell},& 0,&            0,& 0\\
0,&           0,& 0,& 0,&           0,& 0,& 0,&           0,& 0,& I_{\ell-1},&            0,& 0\\
0,& -\alpha_3,& 0,& 0,&   \alpha_1,& 0,& 0,& -\alpha_2,& 0,& 0,&   \alpha_0,& 0\\
0,&           0,& 0,& 0,&           0,& 0,& 0,&           0,& 0,& 0,&            0,& I_{n-\ell}\\
\end{smallmatrix}\right]
\end{equation}
where $1\le \ell\le n, \alpha_0, \alpha_1,\alpha_2, \alpha_3\in[-1,1]$ and $\alpha_0^2+\alpha_1^2+\alpha_2^2+\alpha_3^2=1$.
Notice that if $\alpha_2\equiv 0$ and $\alpha_3\equiv 0$ then $G_l$  defined by \eqref{e:gl}
 is an $4n\times 4n$ symplectic Givens rotation  $J_s(i,\alpha)$ defined by equation (37) in  \cite{bbmx98}.
Another example is the direct sum of four identical $n$-by-$n$ Householder matrices
\begin{equation*}
(\mathscr{H}_\ell\oplus \mathscr{H}_\ell\oplus \mathscr{H}_\ell\oplus \mathscr{H}_\ell)(v,\beta)
\end{equation*}
where $v$ is a vector of length $n$ with its first $\ell-1$ elements equal to zero
and $\beta$ a scalar satisfying $\beta(\beta v^Tv-2)=0$. 
If $W\in\mathbb{R}^{4n\times 4n}$ is orthogonally  $JRS$-symplectic and   $M\in\mathbb{R}^{4n\times 4n}$ is $JRS$-symmetric then
\begin{eqnarray*}
J_n(W^TM W)J_n^T=(J_nW^T)M(J_nW^T)^T=W^TM W,\ \ \\
R_n(W^TM W)R_n^T=(R_nW^T)M(R_nW^T)^T=W^TM W,\\
S_n(W^TM W)S_n^T=(S_nW^T)M(S_nW^T)^T=W^TM W.\ \
\end{eqnarray*}
This implies that $JRS$-symmetry is preserved by orthogonally $JRS$-symplectic similarity transformations.

\subsection{The upper $JRS$-Hessenberg form}
Now we deduce the upper Hessenberg form of $JRS$-symmetric matrices   under the   orthogonally $JRS$-symplectic transformations.

\begin{definition}\label{d:hess} A $JRS$-symmetric matrix  $H\in\mathbb{R}^{4n\times 4n}$ is called an upper $JRS$-Hessenberg matrix if 
\begin{equation}\label{e:hess}
H=\left[
                  \begin{array}{rrrr}
                 H_0 &H_2& H_1 & H_3 \\
                -H_2 & H_0 & H_3 & -H_1 \\
                -H_1 & -H_3 & H_0 & H_2 \\
                -H_3 & H_1 & -H_2 & H_0 \\
                  \end{array}
                \right],
\end{equation}
where $H_0\in\mathbb{R}^{n\times n}$ is an upper Hessenberg matrix, $H_1, \ H_2,\ H_3\in\mathbb{R}^{n\times n}$ are upper triangular matrices. Moreover if all  subdiagonal elements of $H_0$ are nonzeros, $H$ is called an unreduced upper $JRS$-Hessenberg matrix. 
 \end{definition}

\begin{theorem}\label{t:hess}
Suppose that  a $JRS$-symmetric matrix $M\in\mathbb{R}^{4n\times 4n}$ is of the form \eqref{e:M}.
Then there exists an orthogonally $JRS$-symplectic matrix $W\in\mathbb{R}^{4n\times 4n}$ such that
$WM W^T=H$ is an upper $JRS$-Hessenberg matrix.
\end{theorem}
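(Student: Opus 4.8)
The plan is to reduce $M$ one column at a time by an explicit product of the two orthogonally $JRS$-symplectic building blocks exhibited just above the statement: the generalized symplectic Givens rotation $G_\ell$ of \eqref{e:gl} and the direct sum $(\mathscr{H}_\ell\oplus\mathscr{H}_\ell\oplus\mathscr{H}_\ell\oplus\mathscr{H}_\ell)(v,\beta)$ of four identical real Householder matrices. Since orthogonally $JRS$-symplectic similarities preserve $JRS$-symmetry (the displayed identities above show exactly this), every intermediate matrix keeps the block form \eqref{e:M}, so it suffices to track the generating blocks $M_0,M_1,M_2,M_3$; equivalently, writing $M=\Upsilon_Q$ with $Q=M_0+M_1i+M_2j+M_3k$ by Theorem \ref{t:JRSrealq}, the whole reduction can be read off from the single quaternion column being cleared. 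Here $G_\ell=\Upsilon_{D_\ell}$ for the unitary $D_\ell=\mathrm{diag}(1,\dots,q,\dots,1)$ with the unit quaternion $q$ in position $\ell$, while the four-fold Householder is $\Upsilon_{\mathscr{H}_\ell}$ for a real orthogonal $\mathscr{H}_\ell$; their similarities act as $D_\ell Q D_\ell^*$ and $\mathscr{H}_\ell Q\mathscr{H}_\ell^T$, respectively.

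First I would fix a column index $k$ and consider the quaternion subcolumn $x=(x_{k+1},\dots,x_n)^T$ with $x_\ell=M_0(\ell,k)+M_1(\ell,k)i+M_2(\ell,k)j+M_3(\ell,k)k$, and clear it in two stages. In the first stage, for each $\ell=k+1,\dots,n$ I apply the rotation $G_\ell$ chosen so that its conjugation left-multiplies the $(\ell,k)$ entry and carries $x_\ell$ to the nonnegative real number $|x_\ell|$; distinct rotations act on distinct coordinates, so $x_{k+1},\dots,x_n$ are transformed independently and, after the stage, the three imaginary parts of the subcolumn vanish identically while its real part is $(|x_{k+1}|,\dots,|x_n|)^T$. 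In the second stage I apply one real Householder $\mathscr{H}_{k+1}$, embedded as the four-fold direct sum, mapping $(|x_{k+1}|,\dots,|x_n|)^T$ to $r e_1$ with $r=\|x\|$. Because this transformation is real and acts identically on all four components, it leaves the (already vanishing) imaginary parts of the subcolumn zero and annihilates the real entries in rows $k+2,\dots,n$, leaving a single \emph{real} subdiagonal entry $r$ in position $(k+1,k)$. This two-stage split is the conceptual crux: the Givens stage forces the subdiagonal of $Q$ to be real (hence $H_1,H_2,H_3$ can be made upper triangular), while the Householder stage only has to clear a real vector to produce the Hessenberg profile of $H_0$.

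I would then run $k$ from $1$ up to $n-1$, performing only the Givens stage when $n-k=1$, i.e.\ to make the last subdiagonal entry real. The essential bookkeeping, and the step I expect to be the main obstacle, is to verify that the work on column $k+1$ never disturbs the zeros already produced in columns $1,\dots,k$. Both the left and right actions of $G_\ell$ and $\mathscr{H}$ for the $(k+1)$-st column touch only rows and columns of index $\ge k+2$, so column $k$ and the real subdiagonal entry in row $k+1$ are untouched; and where a rotation does reach an already-zero entry, multiplying zero by the rotation quaternion keeps it zero. Granting this invariant, after the full sweep the real block $H_0$ is upper Hessenberg, the imaginary blocks $H_1,H_2,H_3$ are upper triangular, and the subdiagonal of $Q$ is real—precisely the upper $JRS$-Hessenberg form of Definition \ref{d:hess}.

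Finally I would set $W$ to be the product of all the $G_\ell$ and $\mathscr{H}$ factors used. Each factor is orthogonal and $JRS$-symplectic, so by Lemma \ref{l:JRSprop}(3) their product is $JRS$-symplectic and, being a product of orthogonal matrices, orthogonal; hence $W$ is orthogonally $JRS$-symplectic and $WMW^T=H$ is the desired upper $JRS$-Hessenberg matrix. An equally clean route, if one prefers to avoid writing out the sweep, is to invoke the correspondence between orthogonally $JRS$-symplectic matrices and quaternion unitaries together with the standard quaternion Householder Hessenberg reduction, with the Givens stage supplying exactly the phase needed to meet the real-inner-product requirement $y^*x\in\mathbb{R}$ of the quaternion Householder construction recalled from \cite{bbm89}.
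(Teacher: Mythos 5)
Your proposal is correct and follows essentially the same route as the paper's proof: the paper also clears each column in two stages, first applying the generalized symplectic Givens rotations $G_2,\dots,G_\ell$ of \eqref{e:gl} to rotate the quaternion subcolumn entries into the nonnegative reals $\gamma_{s1}$, then a four-fold real Householder $\mathscr{H}\oplus\mathscr{H}\oplus\mathscr{H}\oplus\mathscr{H}$ to annihilate the entries below the subdiagonal, with the product of all factors being orthogonally $JRS$-symplectic by Lemma \ref{l:JRSprop}. The only organizational difference is that the paper packages your column-by-column sweep (with its invariant that earlier zeros are preserved) as an induction on $n$, recursing on the trailing $4(\ell-1)\times 4(\ell-1)$ $JRS$-symmetric submatrix obtained by deleting rows and columns $1,\ell+1,2\ell+1,3\ell+1$.
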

\begin{proof}
We  prove the assertion  by  induction on the order $n$.
  For $n=1$, it is clear that the theorem is true. Suppose that for the case $1\le n<\ell$,  there exists an orthogonally
  $JRS$-symplectic matrix $\widetilde{W}\in\mathbb{R}^{4n\times 4n}$ such that
  \begin{equation}\label{e:wuwn}
 \widetilde{W}M\widetilde{W}^T=\left[
                  \begin{array}{rrrr}
                 \widetilde{H}_0 & \widetilde{H}_2&  \widetilde{H}_1 &  \widetilde{H}_3 \\
                - \widetilde{H}_2 &  \widetilde{H}_0 & \widetilde{H}_3 & - \widetilde{H}_1 \\
                - \widetilde{H}_1 & - \widetilde{H}_3 &  \widetilde{H}_0 &  \widetilde{H}_2 \\
                - \widetilde{H}_3 &  \widetilde{H}_1 & - \widetilde{H}_2 &  \widetilde{H}_0 \\
                  \end{array}
                \right],
  \end{equation}
where $\widetilde{H}_0\in\mathbb{R}^{n\times n}$ is an upper Hessenberg matrix, $\widetilde{H}_{1,2,3}\in\mathbb{R}^{n\times n}$ are upper triangular matrix.
   For $n=\ell$,
denote
 \begin{equation*}M_0=\left[\begin{smallmatrix}
                                 m^{(0)}_{11} &  m^{(0)}_{12}&  m^{(0)}_{13}  &M^{(0)}_{14} \\
                                 m^{(0)}_{21} &  m^{(0)}_{22} & m^{(0)}_{23} &M^{(0)}_{24}\\
                                  m^{(0)}_{31} & m^{(0)}_{32} & m^{(0)}_{33} &M^{(0)}_{34}\\
                                 M^{(0)}_{41} & M^{(0)}_{42} & M^{(0)}_{43} & M^{(0)}_{44}\\
                             \end{smallmatrix}
                             \right],\ \ \ 
M_1=\left[\begin{smallmatrix}
                                m^{(1)}_{11}  & m^{(1)}_{12}&  m^{(1)}_{13}  &M^{(1)}_{14} \\
                               m^{(1)}_{21} &  m^{(1)}_{22}  &  m^{(1)}_{23} &M^{(1)}_{24}\\
                               m^{(1)}_{31} &  m^{(1)}_{32} &   m^{(1)}_{22}  &M^{(1)}_{34}\\
                               M^{(1)}_{41}& M^{(1)}_{42} & M^{(1)}_{43} &  M^{(1)}_{44}
                               \end{smallmatrix}
                             \right],
 \end{equation*}
 \begin{equation*}M_2=\left[\begin{smallmatrix}
                             m^{(2)}_{11}  &  m^{(2)}_{12}&  m^{(2)}_{13}  &M^{(2)}_{14} \\
                             m^{(2)}_{21} &  m^{(2)}_{22} &m^{(2)}_{23} &M^{(2)}_{24}\\
                             m^{(2)}_{31} &  m^{(2)}_{32} &  m^{(2)}_{33}  &M^{(2)}_{34}\\
                             M^{(2)}_{41} & M^{(2)}_{42} & M^{(2)}_{43} &M^{(2)}_{44}
                                \end{smallmatrix}
                             \right],\ \ \ 
M_3=\left[\begin{smallmatrix}
                                    m^{(3)}_{11}  & m^{(3)}_{12}& m^{(3)}_{13}  &M^{(3)}_{14} \\
                                   m^{(3)}_{21} &   m^{(3)}_{22}  &  m^{(3)}_{23} &M^{(3)}_{24}\\
                                   m^{(3)}_{31} &  m^{(3)}_{32} &  m^{(3)}_{33}  &M^{(3)}_{34}\\
                                   M^{(3)}_{41} & M^{(3)}_{42} & M^{(3)}_{43} & M^{(3)}_{44}
                               \end{smallmatrix}
                             \right],
 \end{equation*}
 in which $m^{(r)}_{st}\in\mathbb{R}$,  $M^{(r)}_{s4}\in\mathbb{R}^{1\times (\ell-3)}$ and $M^{(r)}_{4s}\in\mathbb{R}^{(\ell-3)\times 1}$ 
 and $M^{(r)}_{44}\in\mathbb{R}^{(\ell-3)\times (\ell-3)}$,  $ r=0,\ldots,3$, $s,t=1,2,3$,
  
There are a series  of  generalized symplectic Givens rotations $G_2,G_3,\ldots,G_\ell\in\mathbb{R}^{4n\times 4n}$ such that
$$\widehat{M}:= G_\ell \cdots G_3G_2M (G_\ell\cdots G_3G_2)^T=\left[
              \begin{smallmatrix}
                \widehat{M}_0 & \widehat{M}_2& \widehat{M}_1 & \widehat{M}_3 \\
                -\widehat{M}_2 & \widehat{M}_0 & \widehat{M}_3 & -\widehat{M}_1 \\
                -\widehat{M}_1 & -\widehat{M}_3 & \widehat{M}_0 & \widehat{M}_2 \\
                -\widehat{M}_3 & \widehat{M}_1 & -\widehat{M}_2 & \widehat{M}_0 \\
              \end{smallmatrix}
            \right]$$
with
\begin{equation*}\widehat{M}_0=\left[\begin{smallmatrix}
       m^{(0)}_{11} &  \widehat{m}^{(0)}_{12}&  \widehat{m}^{(0)}_{13} &\widehat{M}^{(0)}_{14} \\
        \gamma_{21} &  \widehat{m}^{(0)}_{22} &  \widehat{m}^{(0)}_{23} &\widehat{M}^{(0)}_{24}\\
        \gamma_{31} & \widehat{ m}^{(0)}_{32} &  \widehat{m}^{(0)}_{33} &\widehat{M}^{(0)}_{34}\\
     \Gamma_{41} & \widehat{M}^{(0)}_{42} & \widehat{M}^{(0)}_{43} & \widehat{M}^{(0)}_{44}\\
                             \end{smallmatrix}
                             \right],\ \ \ 
\widehat{M}_1=\left[\begin{smallmatrix}
              m^{(1)}_{11} &  \widehat{m}^{(1)}_{12}  &  \widehat{m}^{(1)}_{13} &\widehat{M}^{(1)}_{14}\\
                                  0 &  \widehat{m}^{(1)}_{22}  &  \widehat{m}^{(1)}_{23} &\widehat{M}^{(1)}_{24}\\
                              0 &  \widehat{m}^{(1)}_{32} &  \widehat{m}^{(1)}_{33}  &\widehat{M}^{(1)}_{34}\\
                  0 & \widehat{M}^{(1)}_{42} & \widehat{M}^{(1)}_{43} &  \widehat{M}^{(1)}_{44}
                               \end{smallmatrix}
                             \right],
 \end{equation*}
 \begin{equation*}\widehat{M}_2=\left[\begin{smallmatrix}
              m^{(2)}_{11} &  \widehat{m}^{(2)}_{12}  &  \widehat{m}^{(2)}_{13} &\widehat{M}^{(2)}_{14}\\
                   0&  \widehat{m}^{(2)}_{22}  &  \widehat{m}^{(2)}_{23} &\widehat{M}^{(2)}_{24}\\
                       0&  \widehat{m}^{(2)}_{32} &  \widehat{m}^{(2)}_{33}  &\widehat{M}^{(2)}_{34}\\
                         0 & \widehat{M}^{(2)}_{42} & \widehat{M}^{(2)}_{43}&\widehat{M}^{(2)}_{44}
                                \end{smallmatrix}
                             \right],\ \ \ 
\widehat{M}_3=\left[\begin{smallmatrix}
      m^{(3)}_{11} & \widehat{m}^{(3)}_{12}  &  \widehat{m}^{(3)}_{13} &\widehat{M}^{(3)}_{14}\\
                          0 &  \widehat{m}^{(3)}_{22}  &  \widehat{m}^{(3)}_{23} &\widehat{M}^{(3)}_{24}\\
                          0 &  \widehat{m}^{(3)}_{32} &  \widehat{m}^{(3)}_{33}  &\widehat{M}^{(3)}_{34}\\
                          0& \widehat{M}^{(3)}_{42} & \widehat{M}^{(3)}_{43} & \widehat{ M}^{(3)}_{44}
                               \end{smallmatrix}
                             \right],
 \end{equation*}
 where    $\Gamma_{41}=[\gamma_{41},\cdots,\gamma_{\ell 1}]^T$, 
  $\gamma_{s1}=\sqrt{ (m^{(0)}_{s1})^2 +  (m^{(2)}_{s1})^2 + (m^{(1)}_{s1})^2  +(m^{(3)}_{s1})^2}$ ($s=2,3,\ldots, \ell$).
Then we can generate a Householder matrix $\mathscr{H}_2\in\mathbb{R}^{\ell\times\ell}$ such that
$$\mathscr{H}_2\widehat{M}_0(:,1)=[m_{11}^{(0)},\widetilde{\gamma}_{21},0,\cdots,0]^T,$$
 and  process  the orthogonally $JRS$-symplectic transformation
$$\widetilde{M}:= [\mathscr{H}_2\oplus \mathscr{H}_2\oplus \mathscr{H}_2\oplus \mathscr{H}_2]\widehat{M}[\mathscr{H}_2\oplus \mathscr{H}_2\oplus \mathscr{H}_2\oplus \mathscr{H}_2]^T=\left[
              \begin{smallmatrix}
                \widetilde{M}_0 & \widetilde{M}_2& \widetilde{M}_1 & \widetilde{M}_3 \\
                -\widetilde{M}_2 & \widetilde{M}_0 & \widetilde{M}_3 & -\widetilde{M}_1 \\
                -\widetilde{M}_1 & -\widetilde{M}_3 & \widetilde{M}_0 & \widetilde{M}_2 \\
                -\widetilde{M}_3 & \widetilde{M}_1 & -\widetilde{M}_2 & \widetilde{M}_0 
              \end{smallmatrix}
            \right],$$
where  $\widetilde{M}_s=H_2  \widehat{M}_sH_2^T$, $s=0,\ldots,3$.
Note that the submatrix of $\widetilde{M}$ by deleting the $1,\ell+1,2\ell+1,3\ell+1$ rows and columns is a $4(\ell-1)\times 4(\ell-1)$ $JRS$-symmetric matrix.
By the introduction assumption,  the theorem can be proved.
\end{proof}
\begin{corollary}\label{c:hermitian} Suppose that $M\in\mathbb{R}^{4n\times 4n}$ is a $JRS$-symmetric matrix.
\begin{itemize}
\item[$(1)$] If $M$ is also symmetric,
 there exists an orthogonally $JRS$-symplectic matrix $W\in\mathbb{R}^{4n\times 4n}$ such that
\begin{equation}\label{e:H}
WMW^T=H_0\oplus H_0\oplus H_0\oplus H_0,
\end{equation}
where $H_0\in\mathbb{R}^{n\times n}$ is a symmetric tridiagonal matrix \cite{jwl13}.
\item[$(2)$] If $M$ is also skew-symmetric, there exists an orthogonally $JRS$-symplectic matrix $W\in\mathbb{R}^{4n\times 4n}$ such that $WMW^T=H$ has the form \eqref{e:hess}  with
 $H_0=-H_0^T\in\mathbb{R}^{n\times n}$ tridiagonal and $H_1, \ H_2,\ H_3\in\mathbb{R}^{n\times n}$ diagonal.
 \end{itemize}
\end{corollary}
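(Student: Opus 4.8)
The plan is to derive both parts directly from Theorem \ref{t:hess}, observing that the additional (skew-)symmetry of $M$ forces the off-diagonal real blocks of the resulting $JRS$-Hessenberg form to degenerate.

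First I would apply Theorem \ref{t:hess} to obtain an orthogonally $JRS$-symplectic matrix $W$ with $WMW^T=H$ an upper $JRS$-Hessenberg matrix of the form \eqref{e:hess}, so that $H_0$ is upper Hessenberg and $H_1,H_2,H_3$ are upper triangular. Because $W$ is orthogonal, $W^T=W^{-1}$, so the map $M\mapsto WMW^T$ is simultaneously a congruence and a similarity; hence it preserves symmetry and skew-symmetry. Thus $M=M^T$ yields $H=H^T$, while $M=-M^T$ yields $H=-H^T$.

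Next I would translate the (skew-)symmetry of $H$ into conditions on its real blocks by transposing the block array \eqref{e:hess} and matching entries block by block. In the symmetric case this forces $H_0=H_0^T$ together with $H_1=-H_1^T$, $H_2=-H_2^T$, $H_3=-H_3^T$; in the skew-symmetric case it forces $H_0=-H_0^T$ together with $H_s=H_s^T$ for $s=1,2,3$.

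Finally the collapse follows from two elementary facts about triangular matrices. For part $(1)$, $H_0$ is symmetric and upper Hessenberg, hence symmetric tridiagonal; each $H_s$ $(s=1,2,3)$ is simultaneously upper triangular and skew-symmetric, hence zero, which yields the block-diagonal form \eqref{e:H}. For part $(2)$, $H_0$ is skew-symmetric and upper Hessenberg, hence tridiagonal with vanishing diagonal; each $H_s$ is simultaneously upper triangular and symmetric, hence diagonal, which is exactly the asserted form. I expect no genuine obstacle here: the only care needed is in verifying the block-level transpose relations coming from the $4\times 4$ block structure, and the remainder is an immediate consequence of Theorem \ref{t:hess} combined with these triangularity observations.
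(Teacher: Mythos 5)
Your proof is correct and follows the route the paper itself intends: the corollary is stated without an explicit proof as a consequence of Theorem \ref{t:hess}, and your argument---congruence by the orthogonal $W$ preserves symmetry and skew-symmetry, after which block-transposing the form \eqref{e:hess} forces $H_0$ to be (skew-)symmetric Hessenberg, hence tridiagonal, and forces each upper triangular $H_s$ to be skew-symmetric (hence zero) in case $(1)$ or symmetric (hence diagonal) in case $(2)$. The block-level transpose relations you flag as the only delicate point do check out against \eqref{e:hess}, so there is no gap.
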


\subsection{The  $JRS$-QR decomposition}
In analogous processing,  we  define and calculate the $JRS$-QR decomposition of $JRS$-symmetric matrices.
\begin{definition}\label{d:uppertriangular} A $JRS$-symmetric matrix  $R\in\mathbb{R}^{4m\times 4n}$ is called an upper $JRS$-triangular matrix if 
\begin{equation}\label{e:qr}
R=\left[
                  \begin{array}{rrrr}
                 R_0 &R_2& R_1 & R_3 \\
                -R_2 & R_0 & R_3 & -R_1 \\
                -R_1 & -R_3 & R_0 & R_2 \\
                -R_3 & R_1 & -R_2 & R_0 \\
                  \end{array}
                \right],
\end{equation}
where $R_0\in\mathbb{R}^{m\times n}$ is upper triangular, $R_1, \ R_2$,  and $R_3\in\mathbb{R}^{m\times n}$ are  strictly upper triangular.
 Moreover,  if  $R_0$ is also strictly upper triangular then $R$ is called a strictly  upper $JRS$-triangular matrix. 
 \end{definition}

\begin{theorem}\label{t:qr} Suppose that $M\in\mathbb{R}^{4m\times 4n}$ is a $JRS$-symmetric matrix.
Then there exists an orthogonally $JRS$-symplectic matrix $W\in\mathbb{R}^{4m\times 4m}$ such that
$W^T\Upsilon_Q=R\in\mathbb{R}^{4m\times 4n}$ is an upper $JRS$-triangular form.
\end{theorem}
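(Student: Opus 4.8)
My plan is to mimic the Hessenberg reduction of Theorem \ref{t:hess}, but to annihilate the \emph{entire} portion of each column lying below the main diagonal (and, in the three ``imaginary'' blocks, the diagonal as well), processing one column at a time by induction on $\min\{m,n\}$. By Theorem \ref{t:JRSrealq} the assumption that $M$ is $JRS$-symmetric already means $M=\Upsilon_Q$ for some $Q\in\mathbb{H}^{m\times n}$, so I may write $M$ in the block form \eqref{e:M}. Two observations will make a \emph{one-sided} (left) reduction legitimate here, even though the Hessenberg argument used a two-sided similarity. First, each generalized symplectic Givens rotation $G_\ell$ of \eqref{e:gl} and each four-fold Householder $\mathscr{H}\oplus\mathscr{H}\oplus\mathscr{H}\oplus\mathscr{H}$ is orthogonal and $JRS$-symmetric, and by Lemma \ref{l:JRSprop}(2) a product of $JRS$-symmetric matrices is again $JRS$-symmetric; hence every partial product of such factors applied to $M$ stays $JRS$-symmetric, i.e. keeps the block form \eqref{e:M}, so that $W^{T}M$ is automatically of the shape \eqref{e:qr} required by Definition \ref{d:uppertriangular}. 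Second, by the observation following Definition \ref{d:JRSsym}, a $JRS$-symmetric orthogonal matrix is automatically orthogonally $JRS$-symplectic; thus it suffices to accumulate orthogonal $JRS$-symmetric factors and the resulting product $W$ will be orthogonally $JRS$-symplectic for free.

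For the reduction of the first column I would note that the four real components of the quaternion entry $q_{s1}$ sit in rows $s,\,m+s,\,2m+s,\,3m+s$ of the first column of $M$, as $m^{(0)}_{s1},-m^{(2)}_{s1},-m^{(1)}_{s1},-m^{(3)}_{s1}$. For each $s=1,\dots,m$ I would choose a generalized symplectic Givens rotation $G_s$ acting on exactly these four rows that rotates this $4$-vector onto $(\gamma_{s1},0,0,0)^{T}$ with $\gamma_{s1}=\sqrt{(m^{(0)}_{s1})^{2}+(m^{(1)}_{s1})^{2}+(m^{(2)}_{s1})^{2}+(m^{(3)}_{s1})^{2}}=|q_{s1}|$. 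These row-quadruples are pairwise disjoint across $s$, so the $G_s$ do not interfere; applying all of them, and crucially \emph{including} $G_1$ (which the Hessenberg reduction omits, and which is precisely what forces the $(1,1)$ entries of $M_1,M_2,M_3$ to vanish), turns the first column of $M_0$ into the real vector $(\gamma_{11},\dots,\gamma_{m1})^{T}$ and the first columns of $M_1,M_2,M_3$ into zero. I would then apply one four-fold Householder $\mathscr{H}\oplus\mathscr{H}\oplus\mathscr{H}\oplus\mathscr{H}$, with $\mathscr{H}$ the real reflector carrying $(\gamma_{11},\dots,\gamma_{m1})^{T}$ to $(\|q_1\|,0,\dots,0)^{T}$, to clear the whole subcolumn $\gamma_{21},\dots,\gamma_{m1}$; since it acts identically on the four blocks and the first columns of $M_1,M_2,M_3$ are already zero, those stay zero. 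After this the first column of the (still $JRS$-symmetric) matrix carries $\|q_1\|$ in the $(1,1)$ entry of the $M_0$-block and zeros everywhere below and off it.

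To close the induction I would observe that deleting rows $1,m+1,2m+1,3m+1$ and columns $1,n+1,2n+1,3n+1$ is the same as deleting the first row and first column from each of the four blocks, so the trailing $4(m-1)\times 4(n-1)$ matrix is again $JRS$-symmetric, being the real counterpart of the trailing quaternion submatrix. The induction hypothesis then supplies an orthogonally $JRS$-symplectic $\widetilde{W}\in\mathbb{R}^{4(m-1)\times 4(m-1)}$ that triangularizes it; embedding $\widetilde{W}$ to act as the identity on the deleted rows and columns (a leading $1$ in the $W_0$-block and a leading $0$ in the $W_1,W_2,W_3$-blocks of \eqref{e:w}) produces a matrix that is a permutation-similarity of $I_4\oplus\widetilde{W}$, hence orthogonal, and is $JRS$-symmetric by construction, therefore orthogonally $JRS$-symplectic, and it fixes the already-reduced first column. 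Composing it with the first-column transformation yields the required $W$; the cases $m=0$ or $n=0$ are vacuous and serve as the base of the induction, so the rectangular situations $m\neq n$ are handled automatically.

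The step I expect to demand the most care is exactly the replacement of the two-sided similarity used for the Hessenberg form by the one-sided reduction needed for QR: I must verify that left multiplication alone preserves $JRS$-symmetry and that the three zeros produced by each $G_s$ in the ``imaginary'' rows are genuine zeros of $M_1,M_2,M_3$ rather than an accident of one column. Both follow from Lemma \ref{l:JRSprop}(2) together with the forced block form \eqref{e:M}, but they are the crux and must be checked, not assumed. As a sanity check, the statement is nothing but the real-counterpart image of the quaternion QR factorization $Q=UR_Q$, with $U$ unitary and $R_Q$ upper triangular and real on its diagonal: putting $W=\Upsilon_U$ and $R=\Upsilon_{R_Q}$ gives $W^{T}\Upsilon_Q=\Upsilon_{U^{*}Q}=\Upsilon_{R_Q}=R$. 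I nonetheless prefer the inductive construction above, since it stays entirely within real arithmetic and directly delivers the structure-preserving algorithm.
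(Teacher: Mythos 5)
Your proposal is correct and takes essentially the same approach as the paper: the paper's own proof of this theorem is a single line stating it "can be proved in a similar way with Theorem \ref{t:hess}", and your column-by-column induction using the generalized symplectic Givens rotations of \eqref{e:gl} followed by four-fold Householder reflectors is exactly that analogy carried out in detail, with the correct adjustments for the QR setting (one-sided multiplication preserving $JRS$-symmetry via Lemma \ref{l:JRSprop}, and inclusion of $G_1$ to make the diagonal entries of $R_1,R_2,R_3$ vanish).
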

\begin{proof}
The theorem can be proved in a similar way with  Theorem \ref{t:hess}.
\end{proof}

Notice that if $n=1$, $W$ acts like a Householder transformation to simultaneously delete nonzero elements of $M$ besides $(1,1),(m+1,2),(2m+1,3),(3m+1,4)$ positions; in this case we denote
\begin{equation}\label{e:house}
W=\texttt{house}(M).
\end{equation}
This notation will be used in the outlines of our algorithms.

\subsection{The real $JRS$-Schur decomposition}
The real $JRS$-Schur form can be introduced for $JRS$-symmetric matrices.
\begin{definition}\label{d:schur} A $JRS$-symmetric matrix  $T\in\mathbb{R}^{4n\times 4n}$ is called the  real $JRS$-Schur form if 
\begin{equation}\label{e:schur}
T=\left[
                  \begin{array}{rrrr}
                 T_0 &T_2& T_1 & T_3 \\
                -T_2 & T_0 & T_3 & -T_1 \\
                -T_1 & -T_3 & T_0 & T_2 \\
                -T_3 & T_1 & -T_2 & T_0 \\
                  \end{array}
                \right],
\end{equation}
where $T_0\in\mathbb{R}^{n\times n}$ is a real Schur form, $T_1, \ T_2$,  and $T_3\in\mathbb{R}^{n\times n}$  are   upper triangular.
 \end{definition}

\begin{theorem}\label{t:schur}
Suppose that $M\in\mathbb{R}^{4n\times 4n}$ is a $JRS$-symmetric matrix.
Then there exists an orthogonally $JRS$-symplectic matrix $W\in\mathbb{R}^{4n\times 4n}$ such that
$W^TMW=T\in\mathbb{R}^{4n\times 4n}$ is a real $JRS$-Schur form.
\end{theorem}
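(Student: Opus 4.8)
The plan is to obtain the real $JRS$-Schur form by transplanting the quaternion Schur triangularization of Theorem~\ref{t:decompositionq} through the real-counterpart correspondence, rather than by arguing with $M$ directly. By Theorem~\ref{t:JRSrealq} the $JRS$-symmetric matrix $M$ is the real counterpart $\Upsilon_Q$ of a unique $Q\in\mathbb{H}^{n\times n}$. Schur's triangularization theorem then supplies a unitary $U\in\mathbb{H}^{n\times n}$ such that $R:=U^{*}QU$ is upper triangular with complex diagonal entries. Writing $R=R_0+R_1 i+R_2 j+R_3 k$, the upper-triangularity of $R$ as a quaternion matrix forces each real component $R_0,R_1,R_2,R_3$ to be upper triangular, so that $R_0$ is in particular a (degenerate) real Schur form.

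Next I would set $W:=\Upsilon_U$ and check that it is the transformation required. Since $U$ is unitary, $\Upsilon_U$ is orthogonal, and because every orthogonal real counterpart is $JRS$-symmetric it is automatically orthogonally $JRS$-symplectic, as recorded just after Definition~\ref{d:JRSsym}. Using $\Upsilon_{U^{*}}=\Upsilon_U^{T}$ together with the multiplicativity $\Upsilon_{AB}=\Upsilon_A\Upsilon_B$ of the real counterpart, one gets $W^{T}MW=\Upsilon_{U^{*}}\Upsilon_Q\Upsilon_U=\Upsilon_{U^{*}QU}=\Upsilon_R$. Comparing $\Upsilon_R$ block by block with \eqref{e:realq} and \eqref{e:schur} shows that $\Upsilon_R$ has exactly the form \eqref{e:schur} with $T_0=R_0$, $T_1=R_1$, $T_2=R_2$, $T_3=R_3$; since all four are upper triangular and $T_0$ is a real Schur form, $T=\Upsilon_R$ is a real $JRS$-Schur form, which finishes the argument.

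The point that requires genuine care is not the algebra above but the tension between the stated target and the structure-preserving philosophy of the paper. The route via Theorem~\ref{t:decompositionq} in fact delivers the \emph{stronger} upper-triangular $T_0$, but it leans on the quaternion (hence complex) Schur form. A self-contained, fully real constructive proof---mirroring the inductive deflation used for Theorem~\ref{t:hess}---would instead peel off one standard eigenvalue at a time by an orthogonally $JRS$-symplectic similarity; here the real obstacle is a non-real standard eigenvalue $a+bi$ with $b>0$, for which insisting on real transformations precludes a one-dimensional deflation and forces one to isolate a two-dimensional real invariant subspace as a $2\times 2$ block of $T_0$. I expect the hard part to be verifying that such a deflation can always be realized by an orthogonally $JRS$-symplectic matrix while preserving the interleaved block pattern \eqref{e:M}, and that the induction hypothesis then applies to the trailing $4(n-1)\times 4(n-1)$ $JRS$-symmetric submatrix obtained after deleting the appropriate four rows and columns, exactly as in the bookkeeping at the end of Theorem~\ref{t:hess}.
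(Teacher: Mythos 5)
Your proof is correct, but it takes a genuinely different route from the paper's. The paper disposes of Theorem~\ref{t:schur} in one sentence---``proved in a similar way with Theorem~\ref{t:hess}''---i.e.\ it appeals to the same inductive, constructive deflation by orthogonally $JRS$-symplectic transformations that built the $JRS$-Hessenberg form, staying entirely inside real arithmetic and never invoking quaternion results. You instead transplant the quaternion Schur triangularization (Theorem~\ref{t:decompositionq}) through the real-counterpart correspondence: Theorem~\ref{t:JRSrealq} identifies $M$ with $\Upsilon_Q$, the identities $\Upsilon_{AB}=\Upsilon_A\Upsilon_B$ and $\Upsilon_{U^*}=\Upsilon_U^T$ give $W^TMW=\Upsilon_{U^*QU}$, and the equivalence ``unitary $\Leftrightarrow$ orthogonal $\Leftrightarrow$ orthogonally $JRS$-symplectic'' (noted after Definition~\ref{d:JRSsym}) certifies $W=\Upsilon_U$ as an admissible transformation. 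Your route buys two things: a complete, rigorous existence proof (the paper's analogy is genuinely incomplete as stated, since a Schur form, unlike a Hessenberg form, cannot be reached by a finite elimination process, and a real-arithmetic deflation argument for non-real standard eigenvalues would require exactly the $2\times 2$-block bookkeeping you flag in your last paragraph), and a stronger conclusion, namely $T_0$ genuinely upper triangular with the standard eigenvalues' real parts on its diagonal rather than merely quasi-triangular. What the paper's approach buys in exchange is self-containment within the real structure-preserving framework and alignment with the algorithmic development (the Francis $JRS$-QR step of Section~\ref{s:JRSHessQR} computes precisely the quasi-triangular form, with $2\times 2$ bumps, that such a real deflation argument would produce); your proof, by contrast, leans on the quaternion Schur theorem imported from \cite{rodman14,zhang97}.
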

\begin{proof}
The theorem can be proved in a similar way with  Theorem \ref{t:hess}.
\end{proof}
\subsection{The structure-preserving $JRS$-Hessenberg QR iteration}\label{s:JRSHessQR}
Based on the previous structure-preserving decompositions,  we turn to designing a real structure-preserving  algorithm of computing the  real $JRS$-Schur decomposition. Let  $M\in\mathbb{H}^{4n\times 4n}$ be $JRS$-symmetric, then a practical $JRS$-QR algorithm can be written as 
\begin{flushleft}
\begin{verse}
$H=VMV^T$\\
{\bf for} $s=1,2,\ldots$\\
\ \ \ \  $H=WR$\ \ ($JRS$-QR decomposition)\\
\ \ \ \ $H=RW$\\
{\bf end}
\end{verse}
\end{flushleft}
where  each  $V,W\in\mathbb{R}^{4n\times 4n}$ is orthogonally $JRS$-symplectic and  $R\in\mathbb{R}^{4n\times 4n}$ is upper $JRS$-triangular.  
When  $M$  has complex eigenvalue  this real iteration is associated with a difficulty  that $H$ can never converge to $JRS$-triangular form. 
The expectations must be lowered and  we must  be content with the calculation of  an alternative decomposition--the real $JRS$-Schur decomposition. 
If $V$ is chosen so that $H$ is upper $JRS$-Hessenberg, then the amount of work per iteration is reduced from $O(n^3)$ to $O(n^2)$.

The  traditional QR algorithm can be adapted to compute a real $JRS$-Schur form of $M$ in
real arithmetic.
\begin{flushleft}
\begin{verse}
$H=VMV^T$  ($JRS$-Hessenberg reduction)\\
{\bf for} $s=1,2,\ldots$\\
\ \ \ \  Determine a scalar $\kappa$.\\
\ \ \ \  $H-\kappa I=WR$,\ \ ($JRS$-QR decomposition)\\
\ \ \ \ $H=RW+\kappa I$.\\
{\bf end}
\end{verse}
\end{flushleft}
The reduction of $M$ to  $JRS$-Hessenberg form is done in real arithmetic. If the Wilkinson shift $\kappa$ is real, the $JRS$-QR step  results  in a real matrix $H$. If  $\kappa$ is complex, we simultaneously  apply two 
  $JRS$-QR steps, one with shift $\kappa$ and the other with shift $\overline{\kappa}$ to yield a matrix $\widehat{H}$. If
\begin{equation*}
  \breve{W}\breve{R}=(H-\kappa I)(H-\overline{\kappa}I)
\end{equation*}
is the $JRS$-QR decomposition of
 $ (H-\kappa I)(H-\overline{\kappa}I)$, 
then 
\begin{equation*} 
 \widehat{H}=\breve{W}^T H\breve{W}.
\end{equation*}
Since
\begin{equation*}
  (H-\kappa I)(H-\overline{\kappa}I)=H^{2}-2Re(\kappa)H+|\kappa|^{2}I
\end{equation*}
is real,  so are $\breve{W}$ and $\widehat{H}$. The strategy of working with complex conjugate Wilkinson
shifts is so called the \emph{Francis double shift} strategy.
The complex arithmetic can be avoided 
 by forming the matrix $H^{2}-2Re(\kappa)H+|\kappa|^{2}I$, computing its Q-factor $\breve{W}$, and then computing
$\widehat{H} = \breve{W}^{T}H\breve{W}$.
 Unfortunately, 
  the formation of
$H^{2}-2Re(\kappa)H+|\kappa|^{2}I$ requires $O(n^{3})$ operations.
So we have to use a remarkable property
of  $JRS$-Hessenberg matrices to sidestep the formation of $H^2-2Re(\kappa)H+|\kappa|^{2}I$. 
Before turning to this property,  we first consider the uniqueness of the upper $JRS$-Hessenberg reduction.

\subsubsection{The uniqueness of the upper $JRS$-Hessenberg reduction}
Let $M$ be a $JRS$-symmetric matrix of order $4n$ and let $H=WMW^T$ be a orthogonally $JRS$-symplectic reduction of $M$ to upper $JRS$-Hessenberg form. When reducing $M$ to upper $JRS$-Hessenberg form $H$ by a unitary similarity, we must introduce $4(2n-1)(n-1)$ zeros but only $(n-1)(n-2)/2$ free zeros into $M$.   
Notice that an orthogonally $JRS$-symmetric matrix has $n(n-1)/2$ degrees of freedom. Since we must use
$(n-1)(n-2)/2$ of the degrees of freedom to introduce zeros in $M$, we have $n-1$ degrees
of freedom left over in $W$,  just enough to specify the first column of $W$.

\begin{theorem}[\bf Implicit Q Theorem for $JRS$-Hessenberg Form]\label{t:implicitQTheorem}
Suppose that $M$ is a $4n$-by-$4n$ $JRS$-symmetric matrix, 
and 
$U:=[u_1,\ldots,u_{4n}]$ and  $V:=[v_1,\ldots,v_{4n}]$ are orthogonally  $JRS$-symplectic matrices such that 
 $U^TMU=H$ 
and 
$V^TMV=\widehat{H}$ 
 are upper $JRS$-Hessenberg forms defined by \eqref{e:hess}.
Let $r$ denote the smallest positive integer for which $H_0(r,r-1)=0$, with the convention that $r=n$ if $H$ is unreduced. If $[u_1,u_{n+1},u_{2n+1},u_{3n+1}]=[v_1,v_{n+1},v_{2n+1},v_{3n+1}]$, then  $[u_s,u_{n+s},u_{2n+s},u_{3n+s}]$ $=$ $\pm[v_s,v_{n+s},v_{2n+s},v_{3n+s}]$ for $s=2:r$. Moreover, if $r<n$, then  $\widehat{H}_0(r+1,r)=0$.
\end{theorem}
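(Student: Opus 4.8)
The plan is to transport the statement to the quaternion side, where it becomes the familiar implicit-$Q$ theorem for Hessenberg matrices, and then to run the classical column-by-column argument, the one genuinely new point being a realness argument forced by the $JRS$-Hessenberg structure. By Theorem~\ref{t:JRSrealq} I may write $M=\Upsilon_{\mathcal M}$, $H=\Upsilon_{\mathcal H}$, $\widehat H=\Upsilon_{\widehat{\mathcal H}}$, $U=\Upsilon_{\mathcal U}$, $V=\Upsilon_{\mathcal V}$ with $\mathcal M,\mathcal H,\widehat{\mathcal H}\in\mathbb H^{n\times n}$ and $\mathcal U,\mathcal V\in\mathbb H^{n\times n}$ unitary (unitarity because $U,V$ are orthogonal). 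Using $\Upsilon_{Q^*}=(\Upsilon_Q)^T$ and $\Upsilon_{AB}=\Upsilon_A\Upsilon_B$, the hypotheses $U^TMU=H$, $V^TMV=\widehat H$ become $\mathcal U^*\mathcal M\mathcal U=\mathcal H$ and $\mathcal V^*\mathcal M\mathcal V=\widehat{\mathcal H}$. The crucial dictionary entry for Definition~\ref{d:hess} is that, since $H_1,H_2,H_3$ are upper triangular, the subdiagonal entry $\mathcal H(s+1,s)$ equals the \emph{real} scalar $H_0(s+1,s)$; thus an upper $JRS$-Hessenberg matrix is exactly a quaternion upper Hessenberg matrix whose subdiagonal is real. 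Likewise the four-column block $[u_s,u_{n+s},u_{2n+s},u_{3n+s}]$ is precisely the real encoding of the quaternion column $\mathcal U(:,s)$, so the hypothesis reads $\mathcal U(:,1)=\mathcal V(:,1)$ and the conclusion reads $\mathcal U(:,s)=\pm\mathcal V(:,s)$.

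Next I set $\mathcal W=\mathcal V^*\mathcal U$, which is quaternion unitary, and read off from the two similarities the intertwining relation $\widehat{\mathcal H}\mathcal W=\mathcal W\mathcal H$. The hypothesis gives the base case $\mathcal W(:,1)=\mathcal V^*\mathcal U(:,1)=\mathcal V^*\mathcal V(:,1)=e_1$. Writing $\omega_s:=\mathcal W(:,s)$ and comparing column $s$ of $\widehat{\mathcal H}\mathcal W=\mathcal W\mathcal H$, the Hessenberg shape of $\mathcal H$ isolates the top term and gives
\[
\omega_{s+1}\,\mathcal H(s+1,s)=\widehat{\mathcal H}\,\omega_s-\sum_{a=1}^{s}\omega_a\,\mathcal H(a,s).
\]
Assuming inductively $\omega_a=\pm e_a$ for $a\le s$, the right-hand side is supported on rows $1,\dots,s+1$; since $\mathcal H(s+1,s)=H_0(s+1,s)$ is real and, by the definition of $r$, nonzero for the columns in question, and since $\omega_{s+1}$ is a unit vector orthogonal to $e_1,\dots,e_s$, it follows that $\omega_{s+1}=e_{s+1}\,q$ for some unit quaternion $q$.

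The step I expect to be the real obstacle, and the only place the argument departs from the real case, is upgrading this unit quaternion $q$ to a real sign $\pm1$. In the classical theorem a unit scalar is automatically $\pm1$; here I must use the $JRS$ structure. Comparing the row-$(s+1)$ entries in the displayed identity yields $q\,H_0(s+1,s)=\pm\,\widehat H_0(s+1,s)$, and because \emph{both} subdiagonals are real --- which is exactly where the upper triangularity of $H_{1},H_{2},H_{3}$ and $\widehat H_{1},\widehat H_{2},\widehat H_{3}$ enters --- the quaternion $q$ must be real, hence $q=\pm1$ (and $|\widehat H_0(s+1,s)|=|H_0(s+1,s)|$). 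This closes the induction and gives $\omega_s=\pm e_s$, that is $\mathcal U(:,s)=\mathcal V\,\omega_s=\pm\mathcal V(:,s)$, throughout the unreduced leading block; re-encoding each quaternion column into its four real columns (a real $\pm1$ scales all four identically) delivers the asserted relation for the indices $s=2,\ldots,r$.

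Finally, for the \emph{moreover} claim I take the first column at which the subdiagonal of $H_0$ vanishes: there the top term drops out of the recurrence, so $\widehat{\mathcal H}\,\omega_r$ is supported on rows $1,\dots,r$, and reading its row-$(r+1)$ entry forces $\widehat H_0(r+1,r)=0$. The main risks in carrying this out are bookkeeping ones: tracking that quaternion scalars multiply the columns \emph{on the right}, and verifying once and for all the two dictionary facts --- the block structure \eqref{e:w} of $\mathcal W$'s real counterpart, and the real-subdiagonal characterization of upper $JRS$-Hessenberg form --- on which the realness upgrade ultimately rests.
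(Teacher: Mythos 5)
Your proof is correct in substance and follows the same skeleton as the paper's: both form the transition matrix between the two reductions (your $\mathcal W=\mathcal V^*\mathcal U$ is exactly the quaternion matrix whose real counterpart is the paper's $W=V^TU$), both extract a column recurrence from the intertwining relation $\widehat{\mathcal H}\mathcal W=\mathcal W\mathcal H$ (the paper's equation \eqref{e:gwd}), and both close by induction on columns. The genuine difference is where the argument lives. The paper stays on the real side, manipulating $4n\times 4n$ block matrices, and at the crucial moment merely asserts ``by induction'' that $W_0(:,1:s)$ is upper triangular with nonzero diagonal while $W_1(:,1:s),W_2(:,1:s),W_3(:,1:s)$ are strictly upper triangular; you instead transport everything through the isomorphism of Theorem~\ref{t:JRSrealq} and work with $n\times n$ quaternion matrices. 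What your version buys is a transparent treatment of the one step that has no counterpart in the classical real proof: why the unit scalar $q$ linking corresponding columns is $\pm1$ rather than an arbitrary unit quaternion. Your identity $q\,H_0(s+1,s)=\pm\widehat H_0(s+1,s)$, with both subdiagonal entries real (exactly because $H_{1,2,3}$ and $\widehat H_{1,2,3}$ are upper triangular) and $H_0(s+1,s)\neq0$, forces $q\in\mathbb R$, hence $q=\pm1$; this is precisely the content hidden inside the paper's unproved triangularity assertion, so on this point your write-up is more complete than the paper's. What the paper's formulation buys in exchange is self-containedness in the real setting in which the theorem is stated (no dictionary facts about $\Upsilon$ are invoked), and block bookkeeping that its later algorithms reuse.

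One defect you share with the paper, and should be aware of: with $r$ literally defined as the smallest index for which $H_0(r,r-1)=0$, the recurrence degenerates one column too early, and the conclusion at $s=r$ is actually false. Take $\mathcal M=I$ of size $2$, $\mathcal U=I$, and $\mathcal V=\mathrm{diag}(1,q)$ with $q$ a nonreal unit quaternion: all hypotheses hold, $r=2$, yet $\mathcal U(:,2)\neq\pm\mathcal V(:,2)$. The statement is consistent only under the Golub--Van Loan convention that $r$ is the smallest index with $H_0(r+1,r)=0$ (equivalently, the leading $r\times r$ Hessenberg block is unreduced). Your induction, and in particular your ``moreover'' step --- where you need the top term to drop out at column $r$, i.e.\ $H_0(r+1,r)=0$ --- implicitly uses that corrected convention, and the paper's proof makes the same silent shift (its final display even computes the $(r+1,r)$ entry while calling it $\widehat H_0(r,r-1)$). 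So this is an off-by-one in the theorem as stated, not a gap peculiar to your argument, but a careful version of your proof should fix the definition of $r$ explicitly before running the induction.
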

\begin{proof}
Define $W=V^TU$ and  two kinds of  partitioning
$$W:=\left[
                  \begin{matrix}
                  w_1&\cdots&w_{4n}
                   \end{matrix}
                \right]
:=\left[
                  \begin{matrix}
                 W_0 &W_2& W_1 & W_3 \\
                -W_2 & W_0 & W_3 & -W_1 \\
                -W_1 & -W_3 & W_0 & W_2 \\
                -W_3 & W_1 & -W_2 & W_0 \\
                  \end{matrix}
                \right].$$
Then $W$ is orthogonally  $JRS$-symplectic and  $$[w_1,w_{n+1},w_{2n+1},w_{3n+1}]=[e_1,e_{n+1},e_{2n+1},e_{3n+1}].$$
Denote that $H:=[h_1,\ldots,h_{4n}]$.
The equation $\widehat{H}W=WH$ implies  that 
$$\widehat{H}[w_s,w_{n+s},w_{2n+s},w_{3n+s}]=W[h_s,h_{n+s},h_{2n+s},h_{3n+s}], s=2,\cdots,n.$$
So that 
\begin{eqnarray}
\nonumber [w_s,w_{n+s},w_{2n+s},w_{3n+s}]H_0(s,s-1)
=
\widehat{H}[w_{s-1},w_{n+s-1},w_{2n+s-1},w_{3n+s-1}]\qquad\qquad\\
-[W_{s,1}, W_{s,2}, W_{s,3}, W_{s,4}]
\label{e:gwd} \left[
\begin{smallmatrix}
                 H_0(1:s-1,s-1) &H_2(1:s-1,s-1) & H_1 (1:s-1,s-1) & H_3(1:s-1,s-1)  \\
                -H_2(1:s-1,s-1)  &H_0(1:s-1,s-1)  &H_3 (1:s-1,s-1) & -H_1(1:s-1,s-1)  \\
                -H_1(1:s-1,s-1)  & -H_3(1:s-1,s-1)  & H_0 (1:s-1,s-1) & H_2(1:s-1,s-1)  \\
                -H_3(1:s-1,s-1)  &H_1(1:s-1,s-1)  & -H_2(1:s-1,s-1)  &H_0(1:s-1,s-1)  \\
\end{smallmatrix}
                \right],
\end{eqnarray}
where each $W_{s,t}:=W(:,(t-1)n+1:(t-1)n+s-1)$, $t=1,\ldots,4$.
Since $\widehat{H}$ is upper $JRS$-Hessenberg matrix, 
$$\widehat{H}[w_{s-1},w_{n+s-1},w_{2n+s-1},w_{3n+s-1}]:=\left[
\begin{smallmatrix}
                 \widehat{W}_0(:,s-1) &\widehat{W}_2(:,s-1) & \widehat{W}_1(:,s-1) & \widehat{W}_3(:,s-1) \\
                -\widehat{W}_2(:,s-1)  & \widehat{W}_0(:,s-1)  & \widehat{W}_3 (:,s-1)& -\widehat{W}_1(:,s-1) \\
                -\widehat{W}_1(:,s-1)  & -\widehat{W}_3(:,s-1)  &\widehat{W}_0(:,s-1)&\widehat{W}_2(:,s-1) \\
                -\widehat{W}_3(:,s-1)  & \widehat{W}_1(:,s-1) & -\widehat{W}_2(:,s-1)&\widehat{W}_0(:,s-1)  \\
\end{smallmatrix}
\right]$$
is $JRS$-symmetric, 
where  $\widehat{W}_0(:,s-1)$ has its last $n-s$ entries being zeros and the $s$-th entry nonzero, $\widehat{W}_1(:,s-1)$,$\widehat{W}_2(:,s-1)$ and $\widehat{W}_3(:,s-1)$ have  their last $n-s+1$ entries being zeros.
By introduction on $n$, we can see that
$W_0(:,1:s)$ is upper triangular with nonzero entries on its  diagonal, $W_1(:,1:s)$, $W_2(:,1:s)$ and $W_3(:,1:s)$ are strictly upper triangular.
Thus for $2\le s\le r$,  
$$[w_s,w_{n+s},w_{2n+s},w_{3n+s}]=\pm [e_s,e_{n+s},e_{2n+s},e_{3n+s}].$$
Since $U=VW$, we obtain
$$[u_s,u_{n+s},u_{2n+s},u_{3n+s}]=\pm[v_s,v_{n+s},v_{2n+s},v_{3n+s}],\ s=2,\cdots,r.$$ 
Multiplying   equation \eqref{e:gwd} by $w_r^T$ from the left side,   
there is 
$H_0(r,r-1)=w_r^T\widehat{H}w_{r-1}$,
and then 
$$|H_0(r,r-1)|=|u_r^TV\widehat{H}V^Tu_{r-1}|=|u_r^TMu_{r-1}|=|v_r^TMv_{r-1}|=|\widehat{H}_0(r,r-1)|.$$
 If $r<n$, the structures of $W$ and $H$ implies 
 $$\widehat{H}_0(r,r-1)=e_{r+1}^T\widehat{H}e_r=\pm e_{r+1}^T\widehat{H}We_r=\pm e_{r+1}^TWDe_r=W(r+1,:)D(:,r)$$
 $$= \left[\begin{smallmatrix}W_0(r+1,:)&W_2(r+1,:)&W_1(r+1,:)&W_3(r+1,:) \end{smallmatrix}\right]
 \left[\begin{smallmatrix}
 H_0(:,r)\\
 - H_2(:,r)\\
 - H_1(:,r)\\
 - H_3(:,r)
 \end{smallmatrix}\right]=0.$$
\end{proof}
An important result following the implicit Q theorem  is that if both $U^TMU=H$ and $V^TMV=\widehat{H}$ are  unreduced upper $JRS$-Hessenberg matrices and $[u_1,u_{n+1},u_{2n+1},u_{3n+1}]$ $=$ $[v_1,v_{n+1},$ $v_{2n+1},$ $v_{3n+1}]$, then $H$ and $\widehat{H}$ are ``essentially equal'' in the sense that 
$\widehat{H}=S^{-1}HS$ with  $S={\rm diag}(\pm 1,\ldots, \pm 1)$.

\subsubsection{The double-implicit-shift strategy}\label{s:doubleim}

We now return to our preliminary algorithm and modify it to avoid
the expensive computation of $H^{2}-2Re(\kappa)H+|\kappa|^{2}I$. 
Let $\kappa$ be a complex Francis shift of $H$. 
If we compute the
Q-factor $\breve{W}$ of the matrix $H^{2}-2Re(\kappa)H+|\kappa|^{2}I$ then $\widehat{H} = \breve{W}^{T}H\breve{W}$ is the result of
	applying two steps of the QR algorithm with shifts $\kappa$ and $\overline{\kappa}$. The work of  simultaneously determining $\breve{W}$ and $\widehat{H}$ can be  resolved into five steps:
\begin{itemize}
  \item[1.]  Compute the $1$, $n+1$, $2n+1$ and $3n+1$ columns of $C=H^{2}-2Re(\kappa)H+|\kappa|^{2}I\in\mathbb{R}^{4n\times 4n}$,  and save them into  $F\in\mathbb{R}^{4n\times 4}$.
  \item[2.]  Determine  a Householder transformation  $W_{F}\in\mathbb{R}^{4n\times 4n}$ such that 
  $$W_F^TF=\sigma[e_1,e_{n+1},e_{2n+1},e_{3n+1}],$$
 where each $e_s$ denotes the $s$-th column of the identity matrix and $\sigma\in\mathbb{R}$ is nonnegative.

  \item[3.]  Set $H_F=W_F^{T}HW_F$.
  \item[4.]  Use Householder transformations to reduce $H_F$ to upper $JRS$-Hessenberg form $\widehat{H}$. Call the accumulated transformations $\widehat{W}$.
  \item[5.]  Set $\breve{W}=W_F\widehat{W}$.
\end{itemize}
The key computations are the computation of the  $1$, $n+1$, $2n+1$ and $3n+1$ columns of $C$ and
the reduction of $H_F$ to upper $JRS$-Hessenberg form. 
Because $H$ is upper $JRS$-Hessenberg one can effect the first calculation in $O(1)$ operations and the second in  $O(n^{2})$ operations. We now turn to the details.  For simplicity,  if there is no confusion then a $JRS$-symmetric matrix is represented by its first  block  row, such as 
\begin{equation}\label{e:4H}H:=[H_0,H_2,H_1,H_3].\end{equation}
\begin{remark}\label{r:4blocks}
A $JRS$-symmetric matrix is uniquely determined by its four submatrices on the first row block, and the converse is also true. The structure-preserving transformation on a  $JRS$-symmetric matrix  is equivalent to corresponding transformations on four submatrices on the first row block.
\end{remark}

{\bf Getting started.} Define $C=H^{2}-2Re(\kappa)H+|\kappa|^{2}I:=[C_0,C_2,C_1,C_3]$. 
The computation of the first column of $C_s(s=0,1,2,3)$ requires that
we first compute the scalars $2Re(\kappa)$ and $|\kappa|^{2}$. To do this we need to compute
$\kappa$ firstly.  Define
  a submatrix of $H$ according to   $m=n-1$ as 
$$
H_{mn}=\left[
\begin{array}{rrrr}
               H_0(m:n,m:n) & H_2(m:n,m:n)& H_1(m:n,m:n)& H_3(m:n,m:n)\\
              -H_2(m:n,m:n) &  H_0(m:n,m:n) & H_3(m:n,m:n)& - H_1(m:n,m:n) \\
             - H_1(m:n,m:n) & -H_3(m:n,m:n) &   H_0(m:n,m:n) & H_2(m:n,m:n) \\
              -H_3(m:n,m:n) &  H_1(m:n,m:n) & -H_2(m:n,m:n) &   H_0(m:n,m:n)\\
\end{array}
\right],$$ 
where each $H_s(m:n,m:n)$ denotes the submatrix on $m$ and $n$ rows and columns of $H_s$.
Compute the smallest magnitude eigenvalues of $H_{mn}$, and choose it as the shift $\kappa$.

Define $H^2:=[\widetilde{H}_0,\widetilde{H}_2,\widetilde{H}_1,\widetilde{H}_3]$.  Because $H$ is upper $JRS$-Hessenberg, only the first three components of the first column of 
 $\widetilde{H}_s$ are nonzero, $s=0,\ldots,3$. 
They are calculated by
\begin{flalign}
\begin{split}
[\widetilde{H}_0(1:3,1),\widetilde{H}_2(1:3,1),\widetilde{H}_1(1:3,1),\widetilde{H}_3(1:3,1)]= \qquad   \qquad \qquad \qquad \qquad\qquad \qquad \quad\\
\quad \left[\small
     \begin{array}{cc|}
       h^{(0)}_{11} & h^{(0)}_{12} \\
      h^{(0)}_{21} & h^{(0)}_{22} \\
       0 & h^{(0)}_{32} \\
     \end{array}
          \begin{array}{cc|}
       h^{(2)}_{11} & h^{(2)}_{12} \\
      h^{(2)}_{21} & h^{(2)}_{22} \\
       0 & h^{(2)}_{32} \\
     \end{array}
          \begin{array}{cc|}
       h^{(1)}_{11} & h^{(1)}_{12} \\
      h^{(1)}_{21} & h^{(1)}_{22} \\
       0 & h^{(1)}_{32} \\
     \end{array}
          \begin{array}{cc}
       h^{(3)}_{11} & h^{(3)}_{12} \\
      h^{(3)}_{21} & h^{(3)}_{22} \\
       0 & h^{(3)}_{32} \\
     \end{array}
   \right]
   \left[\small
            \begin{array}{r|r|r|r}
              h^{(0)}_{11} &h^{(2)}_{11}& h^{(1)}_{11}&  h^{(3)}_{11} \\
               h^{(0)}_{21} & h^{(2)}_{21}&  h^{(1)}_{21}&  h^{(3)}_{21}\\ \hline
              -h^{(2)}_{11} & h^{(0)}_{11}&  h^{(3)}_{11}& - h^{(1)}_{11} \\
               -h^{(2)}_{21} & h^{(0)}_{21}&  h^{(3)}_{21}&  -h^{(1)}_{21}\\ \hline
              - h^{(1)}_{11} & -h^{(3)}_{11}&  h^{(0)}_{11}&  h^{(2)}_{11} \\
              - h^{(1)}_{21} & -h^{(3)}_{21}&  h^{(0)}_{21}& -h^{(2)}_{21}\\ \hline
              - h^{(3)}_{11} & h^{(1)}_{11}&  -h^{(2)}_{11}& h^{(0)}_{11} \\
              - h^{(3)}_{21} & h^{(1)}_{21}&  -h^{(2)}_{21}& h^{(0)}_{21}\\ 
            \end{array}
          \right].
\end{split}&
\end{flalign}
 Then  the first column of $C_s$  is 
 \begin{equation}
 c_s=C_s(:,1)=\left[
             \begin{array}{c}
             \widetilde{H}_s(1:3,1)-2Re(\kappa)H_s(1:3,1)+|\kappa|^2I(1:3,sn+1)\\
             0\\
             \vdots \\
             0 \\
             \end{array}
           \right],\  s=0,\cdots,3.
\end{equation}

Now we apply the substitution of $t$ for $2Re(\kappa)$ and $d$ for $|\kappa|^{2}$ to make sure that 
our algorithm works even if the Francis double shifts are real. Specifically, suppose
that the matrix $H_{mn}$ has two smallest  magnitude eigenvalues $\lambda$ and $\mu$. Then
\begin{equation*}
  C=(H-\lambda I)(H-\mu I)=H^{2}-(\lambda+\mu)H+\lambda\mu I=H^{2}-tH+dI.
\end{equation*}
Then we collect the first columns of $C_0,\ldots, C_3$ in
$$F:=[c_0,c_2,c_1,c_3]=\left[
             \begin{array}{cccc}
            f_0&f_2&f_1&f_3\\
             0&0&0&0\\
             \vdots&\vdots&\vdots&\vdots \\
             0&0&0&0 \\
             \end{array}
           \right]$$
           with 
$$ f_s=\widetilde{H}_s(1:3,1)-tH_s(1:3,1)+dI(1:3,sn+1)\in\mathbb{R}^3, s=0,\cdots,3.$$
Observe  that the Household transformation  $W_F$ such that 
$W_F^TF:=\sigma [e_1, 0, 0, 0]$
  can be determined in $O(1)$ flops.

{\bf Reduction back to $JRS$-Hessenberg form.}
Since a similarity transformation with $W_F$ only changes the first, second and third rows and columns of
$H_s$, so that $H_F=W_F^THW_F$ has the form
\begin{equation}\label{e:HF}
H_F=\left[
                  \begin{array}{rrrr}
                 H_0^{F} &H_2^{F} & H_1^{F}  & H_3^{F} \\
                -H_2^{F}  & H_0^{F}  & H_3^{F} & -H_1^{F}  \\
                -H_1^{F}  & -H_3^{F}  & H_0^{F}  & H_2^{F}  \\
                -H_3^{F}  & H_1^{F}  & -H_2^{F}  & H_0^{F} \\
                  \end{array}
                \right]
 \end{equation}
where
\begin{equation*}
H_0^F=\left[
                                    \begin{array}{cccccc}
                                      \times &\times & \times& \times & \times & \times \\
                                           \times &\times & \times& \times & \times & \times \\
                                           \times &\times & \times& \times & \times & \times \\
                                      \times & \times & \times  & \times  & \times & \times  \\
                                      0 & 0 & 0 & \times  & \times  & \times  \\
                                      0 & 0 & 0 & 0 & \times  & \times  \\
                                    \end{array}
                                  \right],~
H_{1,2,3}^F=\left[
                                    \begin{array}{cccccc}
                                      \times &\times & \times& \times & \times & \times \\
                                           \times &\times & \times& \times & \times & \times \\
                                           \times &\times & \times& \times & \times & \times \\
                                      0 & 0 & 0  & \times  & \times & \times  \\
                                      0 & 0 & 0 & 0  & \times  & \times  \\
                                      0 & 0 & 0 & 0 &0 & \times  \\
                                    \end{array}
                                  \right].
\end{equation*}
This matrix can be restored to upper $JRS$-Hessenberg form by the orthogonally $JRS$-symplectic transformations.  The calculation proceeds are as follows:
\begin{eqnarray*}
[H_0^F,H_2^F,H_1^F,H_3^F]\overset{W_{1}}{\Longrightarrow}\qquad \qquad \qquad \qquad \qquad\qquad\qquad\qquad\qquad\qquad\qquad\qquad\qquad\qquad\qquad\qquad\\
\left[\small
\begin{array}{cccccc|cccccc|cccccc|cccccc}
\times &\times & \times& \times & \times & \times 
&\times &\times & \times& \times & \times & \times 
 &\times &\times & \times& \times & \times & \times 
 &\times &\times & \times& \times & \times & \times\\
\times &\times & \times& \times & \times & \times 
&0 &\times & \times& \times & \times & \times 
 &0 &\times & \times& \times & \times & \times 
 &0 &\times & \times& \times & \times & \times\\
0 &\times & \times& \times & \times & \times 
&0 &\times & \times& \times & \times & \times 
 &0 &\times & \times& \times & \times & \times 
 &0 &\times & \times& \times & \times & \times\\
0 &\times & \times& \times & \times & \times 
&0&\times & \times& \times & \times & \times 
 &0 &\times & \times& \times & \times & \times 
 &0 &\times & \times& \times & \times & \times\\
0&\times & \times& \times & \times & \times 
&0&0 & 0& 0 & \times & \times 
 &0 &0& 0&0& \times & \times 
 &0 &0 & 0& 0 & \times & \times\\
0 &0 & 0& 0& \times & \times 
&0 &0 & 0& 0 & 0& \times 
 &0 &0& 0& 0 & 0 & \times 
 &0 &0 & 0& 0 &0 & \times\\
                                    \end{array}
                                  \right]
\end{eqnarray*}
\begin{eqnarray*}
\overset{W_{2}}{\Longrightarrow}\qquad \qquad \qquad \qquad \qquad\qquad\qquad\qquad\qquad\qquad\qquad\qquad\qquad\qquad\qquad\qquad\\
\left[\small
\begin{array}{cccccc|cccccc|cccccc|cccccc}
\times &\times & \times& \times & \times & \times 
&\times &\times & \times& \times & \times & \times 
 &\times &\times & \times& \times & \times & \times 
 &\times &\times & \times& \times & \times & \times\\
\times &\times & \times& \times & \times & \times 
&0 &\times & \times& \times & \times & \times 
 &0 &\times & \times& \times & \times & \times 
 &0 &\times & \times& \times & \times & \times\\
0 &\times& \times& \times & \times & \times 
&0 &0 & \times& \times & \times & \times 
 &0 &0 & \times& \times & \times & \times 
 &0 &0 & \times& \times & \times & \times\\
0 &0 & \times& \times & \times & \times 
&0&0 & \times& \times & \times & \times 
 &0 &0 &\times& \times & \times & \times 
 &0 &0 &\times& \times & \times & \times\\
0&0 & \times& \times & \times & \times 
&0&0 & \times& \times & \times & \times 
 &0 &0& \times&\times& \times & \times 
 &0 &0 & \times& \times & \times & \times\\
0 &0 & \times& \times& \times & \times 
&0 &0 & 0& 0 & 0& \times 
 &0 &0& 0& 0 & 0 & \times 
 &0 &0 & 0& 0 &0 & \times\\
                                    \end{array}
                                  \right]
 \end{eqnarray*}
\begin{eqnarray*}
\overset{W_{3}}{\Longrightarrow}\qquad \qquad \qquad \qquad \qquad\qquad\qquad\qquad\qquad\qquad\qquad\qquad\qquad\qquad\qquad\qquad\\
\left[\small
\begin{array}{cccccc|cccccc|cccccc|cccccc}
\times &\times & \times& \times & \times & \times 
&\times &\times & \times& \times & \times & \times 
 &\times &\times & \times& \times & \times & \times 
 &\times &\times & \times& \times & \times & \times\\
\times &\times & \times& \times & \times & \times 
&0 &\times & \times& \times & \times & \times 
 &0 &\times & \times& \times & \times & \times 
 &0 &\times & \times& \times & \times & \times\\
0 &\times& \times& \times & \times & \times 
&0 &0 & \times& \times & \times & \times 
 &0 &0 & \times& \times & \times & \times 
 &0 &0 & \times& \times & \times & \times\\
0 &0 & \times& \times & \times & \times 
&0&0 & 0& \times & \times & \times 
 &0 &0 &0& \times & \times & \times 
 &0 &0 &0& \times & \times & \times\\
0&0 & 0& \times & \times & \times 
&0&0 &0& \times & \times & \times 
 &0 &0& 0&\times& \times & \times 
 &0 &0 & 0& \times & \times & \times\\
0 &0 & 0& \times& \times & \times 
&0 &0 & 0& \times & \times& \times 
 &0 &0& 0& \times & \times & \times 
 &0 &0 & 0& \times &\times & \times\\
                                    \end{array}
                                  \right]
\end{eqnarray*}
\begin{eqnarray*}
\overset{W_{4}}{\Longrightarrow}\qquad \qquad \qquad \qquad \qquad\qquad\qquad\qquad\qquad\qquad\qquad\qquad\qquad\qquad\qquad\qquad\\
\left[\small
\begin{array}{cccccc|cccccc|cccccc|cccccc}
\times &\times & \times& \times & \times & \times 
&\times &\times & \times& \times & \times & \times 
 &\times &\times & \times& \times & \times & \times 
 &\times &\times & \times& \times & \times & \times\\
\times &\times & \times& \times & \times & \times 
&0 &\times & \times& \times & \times & \times 
 &0 &\times & \times& \times & \times & \times 
 &0 &\times & \times& \times & \times & \times\\
0 &\times& \times& \times & \times & \times 
&0 &0 & \times& \times & \times & \times 
 &0 &0 & \times& \times & \times & \times 
 &0 &0 & \times& \times & \times & \times\\
0 &0 & \times& \times & \times & \times 
&0&0 & 0& \times & \times & \times 
 &0 &0 &0& \times & \times & \times 
 &0 &0 &0& \times & \times & \times\\
0&0 & 0& \times & \times & \times 
&0&0 &0& 0 & \times & \times 
 &0 &0& 0&0& \times & \times 
 &0 &0 & 0& 0 & \times & \times\\
0 &0 & 0& 0& \times & \times 
&0 &0 & 0& 0 & \times& \times 
 &0 &0& 0& 0 & \times & \times 
 &0 &0 & 0& 0 &\times & \times\\
                                    \end{array}
                                  \right]
\end{eqnarray*}
\begin{eqnarray*}
\overset{W_{5}}{\Longrightarrow}\qquad \qquad \qquad \qquad \qquad\qquad\qquad\qquad\qquad\qquad\qquad\qquad\qquad\qquad\qquad\qquad\\
\left[\small
\begin{array}{cccccc|cccccc|cccccc|cccccc}
\times &\times & \times& \times & \times & \times 
&\times &\times & \times& \times & \times & \times 
 &\times &\times & \times& \times & \times & \times 
 &\times &\times & \times& \times & \times & \times\\
\times &\times & \times& \times & \times & \times 
&0 &\times & \times& \times & \times & \times 
 &0 &\times & \times& \times & \times & \times 
 &0 &\times & \times& \times & \times & \times\\
0 &\times& \times& \times & \times & \times 
&0 &0 & \times& \times & \times & \times 
 &0 &0 & \times& \times & \times & \times 
 &0 &0 & \times& \times & \times & \times\\
0 &0 & \times& \times & \times & \times 
&0&0 & 0& \times & \times & \times 
 &0 &0 &0& \times & \times & \times 
 &0 &0 &0& \times & \times & \times\\
0&0 & 0& \times & \times & \times 
&0&0 &0& 0 & \times & \times 
 &0 &0& 0&0& \times & \times 
 &0 &0 & 0& 0 & \times & \times\\
0 &0 & 0& 0& \times & \times 
&0 &0 & 0& 0 & 0& \times 
 &0 &0& 0& 0 & 0 & \times 
 &0 &0 & 0& 0 &0 & \times\\
                                    \end{array}
                                  \right].
\end{eqnarray*}
Now we prove that the upper $JRS$-Hessenberg structure is preserved through the shift QR iteration.
\begin{theorem}\label{t:prHess} 
Suppose $H\in\mathbb{R}^{4n\times 4n}$ is unreduced upper $JRS$-Hessenberg, 
and $\kappa\in\mathbb{C}$ does not represent an eigenvalue of $H$. If $WR=C:=H^{2}-2Re(\kappa)H+|\kappa|^{2}I$ is a $JRS$-QR decomposition, then $\widehat{H}=W^THW$ is also upper $JRS$-Hessenberg.
\end{theorem}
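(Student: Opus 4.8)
The plan is to exploit the fact that $C$ is a polynomial in $H$, which lets me bypass the explicit bulge chasing depicted above and reduce the entire claim to a sparsity-bookkeeping argument. First I would record two structural facts about $C=H^{2}-2\mathrm{Re}(\kappa)H+|\kappa|^{2}I$. By Lemma \ref{l:JRSprop} the $JRS$-symmetric matrices are closed under real linear combinations and products, so $C$ is $JRS$-symmetric; and since $C=p(H)$ with $p(x)=x^{2}-2\mathrm{Re}(\kappa)x+|\kappa|^{2}=(x-\kappa)(x-\bar\kappa)$, it commutes with $H$, i.e. $HC=CH$. I also need $R$ invertible: because $H$ is a real matrix its spectrum is closed under conjugation, so $\kappa$ failing to be an eigenvalue forces $\bar\kappa$ to fail as well, whence $\det C=\det(H-\kappa I)\det(H-\bar\kappa I)=|\det(H-\kappa I)|^{2}\neq 0$. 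As $W$ is orthogonal, $R=W^{T}C$ is then nonsingular. (The only place the hypotheses on $\kappa$ enter is through this nonsingularity of $C$; unreducedness is not needed for the structural conclusion.)

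The key step is the identity $\widehat H=RHR^{-1}$. Starting from $HC=CH$ and $C=WR$, I would write $HWR=WRH$, cancel $R$ on the right, and multiply by $W^{T}$ on the left (using $W^{T}W=I$) to obtain $W^{T}HW=RHR^{-1}$. This converts the analytic hypothesis on $\kappa$ into a purely algebraic statement and removes $W$ from the discussion entirely.

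Next I would translate into quaternion language. By Theorem \ref{t:JRSrealq} the $JRS$-symmetric matrices $H$ and $R$ are the real counterparts of quaternion matrices $h=H_0+H_1 i+H_2 j+H_3 k$ and $r=R_0+R_1 i+R_2 j+R_3 k$, and by the identities $\Upsilon_{AB}=\Upsilon_A\Upsilon_B$ and $\Upsilon_{Q^{-1}}=(\Upsilon_Q)^{-1}$ the matrix $\widehat H=RHR^{-1}$ is the real counterpart of $\hat h=r h r^{-1}$. By Remark \ref{r:4blocks} it then suffices to control the first block row of $\widehat H$, i.e. the components of $\hat h$. Definition \ref{d:hess} says precisely that $h$ is upper Hessenberg as a quaternion matrix with \emph{real} subdiagonal (the imaginary parts $H_1,H_2,H_3$ being upper triangular), while Definition \ref{d:uppertriangular} says $r$ is upper triangular with real diagonal; since $R$ is nonsingular, $r$ has nonzero real diagonal entries $(R_0)_{\ell\ell}$, so $r^{-1}$ is again upper triangular with real diagonal.

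Finally I would finish with two index chases, which is where the work of distinguishing $JRS$-Hessenberg from plain Hessenberg lies. The product of an upper-triangular, an upper-Hessenberg, and an upper-triangular quaternion matrix is upper Hessenberg by exactly the same index count as in the real case (noncommutativity is irrelevant to which entries are forced to vanish), giving $\hat h_{pq}=0$ for $p>q+1$; equivalently $\widehat H_0$ is upper Hessenberg and $\widehat H_1,\widehat H_2,\widehat H_3$ vanish below the subdiagonal. For the subdiagonal itself I would compute $\hat h_{q+1,q}$: the triangular sparsity collapses the double sum to the single surviving term $r_{q+1,q+1}\,h_{q+1,q}\,(r^{-1})_{q,q}$, a product of the real diagonal entries of $r$ and $r^{-1}$ with the real subdiagonal entry $h_{q+1,q}$, hence real. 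Therefore $\widehat H_1,\widehat H_2,\widehat H_3$ vanish on the subdiagonal as well and are upper triangular, so $\widehat H$ is upper $JRS$-Hessenberg. The main obstacle is this last reality computation: keeping the subdiagonal real is exactly what forces the imaginary parts to remain upper triangular, and it hinges on the diagonals of $r$ and $r^{-1}$ being real, which in turn comes from the strict upper triangularity of $R_1,R_2,R_3$ in Definition \ref{d:uppertriangular}.
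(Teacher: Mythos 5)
Your proof is correct and follows essentially the same route as the paper's: nonsingularity of $C$ (hence of $R$), the commutation $CH=HC$ giving $\widehat{H}=W^THW=RHR^{-1}$, and then the observation that conjugating an upper $JRS$-Hessenberg matrix by $JRS$-triangular matrices preserves the form. The only difference is one of detail: the paper merely asserts this last structural fact, while you verify it via the quaternion index chase, including the genuinely needed point that the subdiagonal stays real because the diagonals of $r$ and $r^{-1}$ are real, which is exactly what keeps $\widehat{H}_1,\widehat{H}_2,\widehat{H}_3$ upper triangular.
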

\begin{proof}
Since $\kappa$ is not an eigenvalue of $H$, $C$ is nonsingular, and so is $R$. 
The orthogonally $JRS$-symplectic matrix $W=CR^{-1}$, and $W^T=W^{-1}=RC^{-1}$.
Since $CH=HC$, $W^THW=RC^{-1}HCR^{-1}=RHR^{-1}$. 
Note that $R$ and $R^{-1}$ are $JRS$-triangular.  As the product of two $JRS$-triangular matrices with a $JRS$-Hessenberg matrix, $\widehat{H}$  is upper $JRS$-Hessenberg.  
\end{proof}
The implicit determination of $\widehat{H}$ from $H$ outlined above bases on  the {\it Francis QR step}, first described by Francis (1961) and then included in the books \cite{gova13,stewart01}.

\subsubsection{Computing the real $JRS$-Schur form} 
 The standard way to solve the dense nonsymmetric eigenproblem is firstly  reducing a  matrix to  the upper Hessenberg form, and  producing the real Schur form by iteration with  the  Francis QR step. 
In this subsection we indicate  how to reduce a real $JRS$-Hessenberg matrix  $H\in\mathbb{R}^{4n\times 4n}$ to a real $JRS$-Schur form $T=W^THW$ with the  orthogonal $JRS$-symplectic matrix $W$.

Denote that $H:=[H_0,H_2,H_1,H_3]$, $W:=[W_0,W_2,W_1,W_3]$ and $T:=[T_0,T_2,T_1,T_3]$.  
\begin{itemize}
 \item Firstly,  find the largest nonnegative integer $q$ and the smallest nonnegative integer $p$ such
        that 
$$H_0=\begin{array}{cc}
\left[\begin{array}{ccc}
H_{11}&  H_{12}&  H_{13} \\
0&  H_{22}&  H_{23} \\
0&  0&  H_{33} \end{array}\right]
&
\begin{array}{l}
p \\
n-p-q\\
q
\end{array}
     \end{array}$$
     \quad    where $H_{33}$ is upper quasi-triangular and $H_{22}$ is unreduced.
 \item   Secondly,  if $q<n$, perform a Francis  $JRS$-QR step  on the unreduced upper $JRS$-Hessenberg matrix  $H_{22}$:
            $$H_{22}=\breve{W}^TH_{22}\breve{W}.$$ 
\end{itemize}
 Let $\epsilon$ denote the machine  precision. 
The calculated real $JRS$-Schur form $\widehat{T}$ has the structure defined by \eqref{e:schur} 
and is orthogonally similar to a $JRS$-symmetric matrix  near to $H$, i.e., 
$$W^T(H+E)W=\widehat{T},$$
where $W$ is orthogonally $JRS$-symplectic,  $E$ is $JRS$-symmetric with  small $\|E\|_2\approx\epsilon\|H\|_2$.
The calculated $\widehat{W}$ is almost orthogonally $JRS$-symplectic in the sense that $\widehat{W}^T\widehat{W}-I=F$  is $JRS$-symplectic and $\|F\|_2\approx\epsilon$.  

\bigskip

Recall the observation in Theorem \ref{t:JRSrealq} that the structure-preserving decompositions of  $JRS$-symmetric matrices can lead to  the corresponding decompositions of quaternion matrices.  
For instance, the upper $JRS$-Hessenberg form $H$ defined by \eqref{e:hess} is a real counterpart of quaternion matrix $H_0+H_1i+H_2j+H_3k$, which is a quaternion Hessenberg matrix with {\it real subdiagonal elements};  and the orthogonally $JRS$-symplectic matrix $W$ defined by \eqref{e:w} is  a real counterpart of a unitary quaternion matrix $W_0+W_1i+W_2j+W_3k$. The QR,  block-diagonal Schur and Hessenberg decompositions of quaternion matrices can be easily elicited from those of $JRS$-matrices based on Theorem \ref{t:JRSrealq}. 
One of the most important improvements here
 is that  the subdiagonal (or diagonal) entries of Hessenberg and block-diagonal Schur forms  (or $R$-factor) are real numbers, which will greatly enhance the algorithms based on quaternion matrix decompositions.

\section{A  new implicit double shift quaternion QR algorithm}\label{s:QQRA}
In this section, we present a new fast quaternion QR algorithm with applying the real structure-preserving methods.

A strategy to solve  the eigenproblem of a general quaternion matrix $Q\in\mathbb{H}^{n\times n}$ can be described in two steps:
\begin{itemize} 
\item[$(1)$] Calculate the real $JRS$-Schur form \eqref{e:schur}  of the real counterpart 
 $\Upsilon_Q\in\mathbb{R}^{4n\times 4n}$ of $Q$, and then lead to the quasi upper-triangular Schur matrix
 $$T=T_0+T_1i+T_2j+T_3k\in\mathbb{H}^{n\times n},$$
 where $T_0\in\mathbb{R}^{n\times n}$ is a real Schur form, $T_{1}$, $T_{2}$ and $T_{3} \in\mathbb{R}^{n\times n}$ are   upper triangular.
 \item[$(2)$] Solve the eigenproblem of $T$ and backstep for eigen-information of $Q$ under similarity transformations.
 \end{itemize}
We will concentrate into the first step to develop a new version of the practical quaternion QR algorithm in \cite{bbm89}.  
Without causing any confusion, we use the same notation
$$[Q_0,Q_2,Q_1,Q_3]$$
 to represent the quaternion matrix $Q=Q_0+Q_1i+Q_2j+Q_3k,\ Q_0,\ldots, Q_3\in\mathbb{R}^{n\times n}$,  and its real counterpart $\Upsilon_Q\in\mathbb{R}^{4n\times 4n}$. 
See Remark \ref{r:4blocks} for the explanation.

\subsection{Basic quaternion operations}
At first we introduce several unitary quaternion transformations, including four improved Householder-based transformations and one generalized quaternion Givens transformation.
\subsubsection{Improved Householder-based transformations}
Four Householder-based transformations proposed in \cite{bbm89,sabi06,jwl13,lwzz16}   are recalled with slight improvement.

 Given two different quaternion vectors $x=[x_1,\cdots,x_n]^T,~y=[y_1,\cdots,y_n]^T\in\mathbb{H}^n$ with $\|x\|=\|y\|$ and $y^*x\in\mathbb{R}$, 
there exists a quaternion Householder matrix defined by $\mathscr{H}=I-2uu^*,$  
where $u=\frac{y-x}{\|y-x\|}$, such that $\mathscr{H}y=x$; see  \cite{bbm89} and \cite[Theorem 3.1 and Theorem 3.2]{lwzz16}. 
Applying real structure-preserving methods, we can execute four kinds of improved Householder-based transformations:  for any real vector $v\in\mathbb{R}^n$ with  $\|v\|=1$,
\begin{itemize}
\item when $x=\alpha v$  with $\alpha\in\mathbb{H}$ and $|\alpha|=\|y\|$,    
$\mathscr{H}_1:=I-2uu^*,$  
where $u=\frac{y-x}{\|y-x\|},~(\text{proposed in \cite{bbm89}} )$ 
\item  when $x=\|y\|v$,  
$\mathscr{H}_2:=\frac{1}{\xi}(I-uu^*),$  
where 
 $$u=\frac{y-\xi x}{\sqrt{\|y\|(\|y\|+|y^Tv|)}},~
 \xi=\begin{cases}
1 , \qquad |y^{T}v|=0,\\
-\frac{y^{T}v}{|y^{T}v|},~\text{otherwise},
\end{cases}
$$
  (proposed in \cite{sabi06})
\item when $x=\|y\|v$, 
 $\mathscr{H}_3:=(I-2uu^T)G,$  
 where $u=\frac{Gy-x}{\|Gy-x\|}$, 
$G=\texttt{diag}(g_1,g_2,\dots,g_n),$
$$g_\ell=
\begin{cases}
\frac{\overline{y_\ell}}{|y_\ell|},\quad y_\ell \neq0, \\
1,~\text{otherwise},
\end{cases}
$$
(proposed in \cite{jwl13})
\item  and 
when $x=\|y\|v$, %
 $\mathscr{H}_4:=G\mathscr{H}_1,$ 
where $G=\texttt{diag}(g_1,g_2,\dots,g_n),$
$$g_\ell=
\begin{cases}
\frac{\overline{z_{\ell}}}{|z_{\ell}|},\quad z_{\ell}\neq 0, \\
1,~\text{otherwise}
\end{cases}
\text{with}~z=\mathscr{H}_1y.
$$
(proposed in \cite{lwzz16})
\end{itemize}
\begin{remark}
If $v$ is one column of the identity matrix,  then $\mathscr{H}_2=\mathscr{H}_4=g\mathscr{H}_1$, where $g$ is a unit quaternion scalar which rotates the nonzero element of $\mathscr{H}_1y$ into a positive number.
\end{remark}
\begin{remark}
As pointed by Li et al. \cite{lwzz16},  $\mathscr{H}_1, \ldots, \mathscr{H}_4$ are unitary quaternion matrices and  only  $\mathscr{H}_1$ is Hermitian and reflective. 
\end{remark}
\begin{remark}  Applying the realization of the quaternion operations  in Section \ref{ss:qJRS}, we can execute the quaternion Householder-based transformations in real arithmetic. The  necessary real flops and assignment numbers are listed in Table \ref{t:householder}.
\end{remark}

\begin{table}
\begin{center}
\mbox{ } \caption{Computation amounts and assignment numbers for $H_\ell$ and $H_\ell x$.
}\label{t:householder}
\bigskip
\begin{tabular}{c|llll }
Methods  & \multicolumn{2}{c}{Generate matrix $\mathscr{H}_\ell$}       & \multicolumn{2}{c}{Transformation $\mathscr{H}_\ell x$}       \\
               &  assignment     &     real flops           &      assignment       &  real flops  \\
\hline
$\mathscr{H}_1$     &         $8$         &    $8n+19$              &          $2$         &    $80n-4$         \\
$\mathscr{H}_2$     &       $10$        &      $8n+30$      &                $4$        &     $80n+24$      \\
$\mathscr{H}_3$     &       $n+1$        &     $13n+2$                &     $2n+2$        &     $32n$   \\
$\mathscr{H}_4$     &         $10$        &     $8n+30$        &            $4$        &     $80n+24$     \\
\hline
\end{tabular}
\end{center}
\end{table}

\subsubsection{Generalized quaternion Givens transformations}
Janovsk\'a and Opfer extended the   Givens transformation to quaternion valued matrices  in \cite{joap03}.   
Recall \cite[Theorem 3.4]{joap03} that for given nonzero  vector $x=[x_1,x_2]^T\in\mathbb{H}^2$,  define
$$\mathscr{G}_1=\left[\begin{matrix}\overline{c}&s\\-\overline{s}&c\end{matrix}\right],
~\text{with}~s=-\sigma\frac{\overline{x_2}}{\|x\|},~c=\sigma\frac{\overline{x_1}}{\|x\|},~|\sigma|=1,$$
where $\sigma$ is arbitrary in case $x_1$, $x_2$ are linearly dependent over $\mathbb{R}$ or otherwise $\sigma=\frac{\alpha x_1+\beta x_2}{|\alpha x_1+\beta x_2|}\in\mathbb{H}$ with nonzero vector $[\alpha,\beta]^T\in\mathbb{R}^2$,  then $\mathscr{G}_1$ is a unitary matrix and  $\mathscr{G}_1^*x=\sigma[\|x\|,0]^T$. 
Their extension is based on the traditional form of Givens matrix.
 We will define  a new quaternion Givens transformation in a different view from \cite{joap03,joap05}.
\begin{theorem}
\label{t:g-Givens}
Let $x=[x_1\ x_2]^T\in\mathbb{H}^2$ be given with $x_2\ne 0$. Then there exists a generalized Givens matrix 
$\mathscr{G}_2=\left[\begin{matrix}g_{11}&g_{12}\\g_{21}&g_{22}\end{matrix}\right]$ 
such that $\mathscr{G}_2^*x=[\|x\|_2\ 0]^T$. 
A choice of $\mathscr{G}_2$ is
\begin{flalign}\label{e:g-Givens}
\qquad\qquad\qquad
\begin{split}
g_{11}=\frac{x_1}{\|x\|_2},\  g_{21}=\frac{x_2}{\|x\|_2};\qquad\ \ \\
{\rm if} |x_1|\le |x_2|,\  g_{12}=|g_{21}|,\  g_{22}=-|g_{21}|g_{21}^{-*}g_{11}^*; \\
{\rm if} |x_1|> |x_2|,\  g_{22}=|g_{11}|,\  g_{12}=-|g_{11}|g_{11}^{-*}g_{12}^*.\\
\end{split}&
\end{flalign}
\end{theorem}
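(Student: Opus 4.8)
The plan is to verify directly that the matrix $\mathscr{G}_2$ defined by \eqref{e:g-Givens} is unitary and satisfies $\mathscr{G}_2^*x=[\|x\|_2\ 0]^T$; exhibiting one such matrix is exactly what the statement asks for. Throughout I use the scalar quaternion identities $q^*q=|q|^2$, $(ab)^*=b^*a^*$ and $(a^{-1})^*=(a^*)^{-1}$, together with the fact that $\|x\|_2^2=|x_1|^2+|x_2|^2>0$ because $x_2\ne 0$. Hence the normalizations $g_{11}=x_1/\|x\|_2$ and $g_{21}=x_2/\|x\|_2$ are well defined, and the first column $[g_{11},g_{21}]^T$ already has unit length since $|g_{11}|^2+|g_{21}|^2=(|x_1|^2+|x_2|^2)/\|x\|_2^2=1$.

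First I would dispose of the action on the first coordinate, which is common to both branches: the $(1,1)$ entry of $\mathscr{G}_2^*x$ is $g_{11}^*x_1+g_{21}^*x_2=(|x_1|^2+|x_2|^2)/\|x\|_2=\|x\|_2$. It then remains, in each branch, to check that the $(2,1)$ entry $g_{12}^*x_1+g_{22}^*x_2$ vanishes and that the second column $[g_{12},g_{22}]^T$ is a unit vector orthogonal, in the quaternion inner product, to the first. I treat the branch $|x_1|\le|x_2|$ in full; the branch $|x_1|>|x_2|$ is obtained by interchanging the roles of the two coordinates. The case split is made precisely so that the entry whose inverse is used, namely $g_{21}$ when $|x_1|\le|x_2|$ and $g_{11}$ when $|x_1|>|x_2|$, is guaranteed nonzero, since the larger of $|x_1|,|x_2|$ is always positive.

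For the branch $|x_1|\le|x_2|$ I would first pass to conjugates, obtaining $g_{22}^*=-|g_{21}|\,g_{11}\,g_{21}^{-1}$ from $g_{22}=-|g_{21}|g_{21}^{-*}g_{11}^*$ via $(g_{21}^{-*})^*=g_{21}^{-1}$, taking care to reverse the order of the factors. The key cancellation is then $g_{11}g_{21}^{-1}=x_1x_2^{-1}$, whence $g_{11}g_{21}^{-1}x_2=x_1$; since $g_{12}=|g_{21}|$ is real this gives $g_{12}^*x_1+g_{22}^*x_2=|g_{21}|x_1-|g_{21}|x_1=0$. Orthonormality of the second column follows the same way: $|g_{22}|^2=|g_{21}|^2|g_{21}^{-*}|^2|g_{11}^*|^2=|g_{11}|^2$, so $|g_{12}|^2+|g_{22}|^2=|g_{21}|^2+|g_{11}|^2=1$, and $g_{11}^*g_{12}+g_{21}^*g_{22}=|g_{21}|g_{11}^*-|g_{21}|g_{11}^*=0$ using $g_{21}^*g_{21}^{-*}=1$. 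Together with the unit first column this yields $\mathscr{G}_2^*\mathscr{G}_2=I$, and the mirror computation settles the other branch.

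The only genuine obstacle is bookkeeping rather than conceptual difficulty: because $\mathbb{H}$ is noncommutative one must track the order of every product and apply $(ab)^*=b^*a^*$ and $(a^{-1})^*=(a^*)^{-1}$ without inadvertently commuting factors. The whole argument hinges on the single identity $g_{11}g_{21}^{-1}=x_1x_2^{-1}$, together with its mirror image $g_{21}g_{11}^{-1}=x_2x_1^{-1}$ in the other branch, and it is this identity that makes both the annihilation of the second coordinate and the orthogonality of the two columns collapse to elementary cancellations.
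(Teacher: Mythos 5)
Your proof is correct and follows essentially the same route as the paper's: both arguments turn on the unitarity equations $g_{11}^*g_{12}+g_{21}^*g_{22}=0$, $|g_{12}|^2+|g_{22}|^2=1$ and the same quaternion cancellations ($g_{21}^*g_{21}^{-*}=1$, multiplicativity of the norm), the only difference being that the paper \emph{derives} the entries by solving those equations while you \emph{verify} the stated entries satisfy them, and you additionally check $\mathscr{G}_2^*x=[\|x\|_2\ 0]^T$ explicitly via $g_{11}g_{21}^{-1}=x_1x_2^{-1}$, a step the paper merely asserts. Note also that your symmetric treatment of the branch $|x_1|>|x_2|$ tacitly corrects the typo in the statement: $g_{12}=-|g_{11}|g_{11}^{-*}g_{12}^*$ should read $g_{12}=-|g_{11}|g_{11}^{-*}g_{21}^*$, which is the formula the paper's own derivation produces.
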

\begin{proof}
Because $\mathscr{G}_2$ is required to be unitary, we can define 
$$g_{11}=\frac{x_1}{\|x\|_2}, \ g_{21}=\frac{x_2}{\|x\|_2},$$
and $g_{12}, g_{22}$ should satisfy 
\begin{equation}\label{e:g2122}
g_{11}^*g_{12}+g_{21}^*g_{22}=0,\ g_{12}^*g_{12}+g_{22}^*g_{22}=1.
\end{equation}
In order to ensure stability, the selection problem of $g_{12},\ g_{22}$ will be discussed in the following two cases.
\begin{itemize}
\item[(1)]  $|x_1|\le |x_2|$ if and only if $|g_{11}|\le |g_{21}|$.
From  \eqref{e:g2122}, we get 
$$g_{22}=-g_{21}^{-*}g_{11}^*g_{12},\ 1=|g_{12}|^2+|g_{12}|^2|g_{21}^{-*}g_{11}^*|^2.$$
Then we can choose
$$g_{12}=\frac{1}{\sqrt{1+|g_{21}^{-*}g_{11}^*|^2}}=\frac{1}{\sqrt{1+|g_{21}^{-1}|^2|g_{11}|^2}}=\frac{|g_{21}|}{\sqrt{|g_{21}|^2+|g_{11}|^2}}=|g_{21}|,$$
and then 
$$g_{22}=-|g_{21}|g_{21}^{-*}g_{11}^*.$$
\item[(2)]  $|x_1|>|x_2|$ if and only if $|g_{11}|> |g_{21}|$. 
From  \eqref{e:g2122}, we get 
$$g_{12}=-g_{11}^{-*}g_{21}^*g_{22},\ 1=g_{12}^*g_{12}+g_{22}^*g_{22}=|g_{22}|^2|g_{11}^{-1}|^2|g_{21}|^2+|g_{22}|^2.$$
Therefore we can choose
$$g_{22}=\frac{1}{\sqrt{1+|g_{11}^{-1}|^2|g_{21}|^2}}=\frac{|g_{11}|}{\sqrt{|g_{11}|^2+|g_{21}|^2}}=|g_{11}|,$$
and then 
$$g_{12}=-|g_{11}|g_{11}^{-*}g_{21}^*.$$
\end{itemize}
Obviously, $\mathscr{G}_2$ with such structure is unitary. Finally,
$$\mathscr{G}_2^*x=[\|x\|_2, 0]^T.$$
\end{proof}
\begin{remark}
The quaternion Givens matrix $\mathscr{G}_2$ is the generalization of real Givens matrix, and $|g_{11}|=|g_{22}|$, $|g_{21}|=|g_{12}|$.
\end{remark}
\begin{remark} According to the absolute value of $x_1, x_2$, we take the different $g_{12}, g_{22}$.
When $|x_1|\leq |x_2|$, then $|g_{12}|=|g_{21}|\geq\frac{\sqrt{2}}{2}$. It can ensure  stability in the process of computing $g_{22}$.
When $|x_1|> |x_2|$, then $|g_{22}|=|g_{11}|>\frac{\sqrt{2}}{2}$. It can ensure  stability in the process of computing $g_{21}$.
\end{remark}
\begin{remark}
In Table \ref{t:folps4givens},  we present the comparison on the computation amounts and assigment numbers between the generalized quaternion Givens transformations and the fast quaternion Givens transformations.
\end{remark}

\begin{table}
\begin{center}
\mbox{ } \caption{Computation amounts and assignment numbers for quaternion Givens Transformations.
}\label{t:folps4givens}
\bigskip
\begin{tabular}{l|cccc }
Methods  & \multicolumn{2}{c}{Generate $\mathscr{G}$}       & \multicolumn{2}{c}{Givens Transformation $\mathscr{G}^*x$}     \\
         &  assignment     &     real flops           &      assignment       &  real flops  \\
\hline
    Fast Quaternion Givens $\mathscr{G}_1$     &         $15$         &    $120$              &       $2$            & $120$      \\
 Generalized Quaternion Givens    $\mathscr{G}_2$ &         $9$             &     $69$           &         $2$               & $120$    \\
\hline
\end{tabular}
\end{center}
\end{table}

\subsection{The quaternion Hessenberg reduction}
The Hessenberg reduction of quaternion matrices  based on quaternion Householder-based transformations were firstly proposed in \cite{bbm89}  in the range of our knowledge. 

Reducing a  quaternion matrix $Q\in\mathbb{H}^{n\times n}$ to the  Hessenberg form  means to find a unitary quaternion matrix $W=W_0+W_1i+W_2j+W_3k$ such that 
 \begin{equation}\label{e:hessQ_WH}W^*QW=H,\end{equation}
where  $H=H_0+H_1i+H_2j+H_3k$, $H_0,\ldots, H_3\in\mathbb{R}^{n\times n}$ are upper Hessenberg  matrices. 
Since the real  counterpart of  $Q$ is $JRS$-symmetric, we can firstly  calculate the $JRS$-Hessenberg form $H$ of $\Upsilon_Q$ as shown in the proof of Theorem \ref{t:hess}, and then backstep for the Hessenberg form of the quaternion matrix by Theorem \ref{t:JRSrealq}.

Now we present three real structure-preserving  algorithms.
For simplicity, we need to define  two auxiliary functions: 
\begin{equation}\label{e:idin} 
\texttt{id}(p)=[p,n+p,2n+p,3n+p],
~\texttt{in}(p,q)=[p:q,n+p:n+q,2n+p:2n+q,3n+p:3n+q]\end{equation}
 for any positive integers $p$ and $q$.
\begin{algorithm}[\bf Quaternion Hessenberg  Reduction Based on $\mathscr{H}_1$]\label{a:hessreduction1}  
Given a 
quaternion matrix $Q=Q_0+Q_1i+Q_2j+Q_3k\in\mathbb{H}^{n\times n}$, this algorithm overwrites $Q$ with an upper Hessenberg quaternion matrix $H=H_0+H_1i+H_2j+H_3k$ satisfying  $H=W^*QW$, where $W$ is a unitary quaternion matrix.
\begin{enumerate}
\item[$1.$] Form $H=[Q_0;Q_1;Q_2;Q_3]$; 
\item[$2.$] for s=2:n-1   
\item[$3.$] \qquad $[u,~\beta]=\mathscr{H}_1(H(\texttt{in}(s,n),s-1))$;
\item[$4.$] \qquad $Y=H(\texttt{in}(s,n),s-1:n)$;
\item[$5.$]  \qquad $H(\texttt{in}(s,n),s-1:n)=Y-(\beta*u)*(u'*Y)$;
\item[$6.$] \qquad $Y=[H(1:n,s:n),-H(n+1:2n,s:n),-H(2n+1:3n,s:n),$...
\item[] \qquad\qquad $-H(3n+1:4n,s:n)]$;
\item[$7.$] \qquad $Y=Y-(Y*u)*(\beta*u')$;
\item[$8.$]  \qquad $H(:,s:n)=[Y(1:n,1:n+1-s);-Y(1:n,nn+1:2(n+1-s));$...
\item[] \qquad\qquad $-Y(1:n,2(n+1-s)+1:3(n+1-s));-Y(1:n,3(n+1-s)+1:4(n+1-s))]$;
\item[$9.$] end 
\end{enumerate}

\end{algorithm}

\begin{algorithm}[\bf Quaternion Hessenberg  Reduction Based on $\mathscr{H}_2$ or $\mathscr{H}_4$ ]\label{a:hessreduction2}  Given a 
quaternion matrix $Q=Q_0+Q_1i+Q_2j+Q_3k\in\mathbb{H}^{n\times n}$, this algorithm overwrites $Q$ with an upper Hessenberg quaternion matrix $H=H_0+H_1i+H_2j+H_3k$ satisfying  $H=W^*QW$, where $W$ is a unitary quaternion matrix.
\begin{enumerate}
  \item[$1.$] Run Algorithm \ref{a:hessreduction1}; and store the computed upper Hessenberg matrix as  $H:=[H_0,H_2,H_1,H_3]$;
   \item[$2.$] for s=2:n
   \item[$3.$] \qquad\qquad $G=\texttt{JRSGivens}(H(\texttt{id}(s+1),s));$\quad ({\rm see~\cite[Algorithm~3.3]{jwl13}})
\item[$4.$]   \qquad \qquad          $[H_0(t,s:n), H_2(t,s:n), H_1(t,s:n),H_3(t,s:n)]$
\item[       ]\qquad \qquad \qquad \ \      $=G^T[H_0(t,s:n);-H_2(t,s:n);-H_1(t,s:n);-H_3(t,s:n)];$
\item[$5.$]    \qquad \qquad         $[H_0(:,t),H_2(:,t),H_1(:,t),H_3(:,t)]=[H_0(:,t),H_2(:,t),H_1(:,t),H_3(:,t)]G;$
\item[$7.$] end
\end{enumerate}
\end{algorithm}

\begin{algorithm}[\bf Quaternion Hessenberg  Reduction Based on $\mathscr{H}_3$]\label{a:hessreduction} Given a 
quaternion matrix $Q:=[Q_0,Q_2,Q_1,Q_3]$, where $Q_{0,1,2,3}\in\mathbb{R}^{n\times n}$, this algorithm overwrites $Q$ with an upper Hessenberg quaternion matrix $H:=[H_0,H_2,H_1,H_3]$  satisfying $\Upsilon_H=\Upsilon_W^T\Upsilon_Q\Upsilon_W$, where $W:=[W_0,W_2,W_1,W_3]$ is a unitary quaternion matrix.
\begin{itemize}\setlength{\itemsep}{1pt}
\item[$1.$]  for s=1:n-1
\item[$2.$] \qquad for t=s+1:n
\item[$3.$] \qquad \qquad            $G=\texttt{JRSGivens}(Q_0(t,s),Q_1(t,s),Q_2(t,s),Q_3(t,s));\quad ({\rm see~\cite[Algorithm~3.3]{jwl13}})$
\item[$4.$]   \qquad \qquad          $[Q_0(t,s:n), Q_2(t,s:n), Q_1(t,s:n),Q_3(t,s:n)]$
\item[       ]\qquad \qquad \qquad \ \      $=G^T[Q_0(t,s:n);-Q_2(t,s:n);-Q_1(t,s:n);-Q_3(t,s:n)];$
\item[$5.$]    \qquad \qquad         $[Q_0(:,t),Q_2(:,t),Q_1(:,t),Q_3(:,t)]=[Q_0(:,t),Q_2(:,t),Q_1(:,t),Q_3(:,t)]G;$
\item[$6.$]  \qquad       end
\item[$7.$]  \qquad       if $s<n-1$
\item[$8.$] \qquad \qquad       $ [u,\beta]=\texttt{house}(Q_0(s+1:n,s));$
\item[$9.$]  \qquad \qquad        $Q_{0,1,2,3}(s+1:n,s:n)=(I-\beta uu^T)Q_{0,1,2,3}(s+1:n,s:n);$
\item[$10.$]   \qquad \qquad      $Q_{0,1,2,3}(:,s+1:n)=Q_{0,1,2,3}(:,s+1:n)(I-\beta uu^T);$
\item[$11.$]   \qquad      end       
\item[$12.$]    end   
\end{itemize}
\end{algorithm}
In line $3$ of Algorithm \ref{a:hessreduction2} and Algorithm \ref{a:hessreduction}, running the function $\texttt{JRSGivens}$ costs  $11$ flops including in $1$ square root operation. The transformation $G$ acts
as a four-dimensional Givens rotation \cite{fmm01}. We refer to   \cite{mmt03,tiss01}  for a backward stable
implementation of  the generalized symplectic Givens rotation \eqref{e:gl} and more Givens-like actions.
 
\begin{remark}  With the same aim of executing the quaternion Hessenberg reduction  in real arithmetic,  Algorithms \ref{a:hessreduction1}  and  \ref{a:hessreduction} are respectively   based on the Householder-based transformations $\mathscr{H}_1$ and $\mathscr{H}_3$. 
 The marked difference between them   is in the following  two aspects.
\begin{itemize}
\item They utilize different real counter parts of quaternion matrices:    the real counter part used in  Algorithm \ref{a:hessreduction} is defined as in \eqref{e:realq},  while that in Algorithm \ref{a:hessreduction1}  is defined as 
  \begin{equation}\label{e:realq1}
   \widehat{\Upsilon}_Q\equiv\left[
              \begin{array}{rrrr}
                Q_0 & -Q_1&- Q_2 &- Q_3 \\
                Q_1 & Q_0 & -Q_3 & Q_2 \\
                Q_2 & Q_3 & Q_0 & -Q_1 \\
                Q_3 & -Q_2 & Q_1 & Q_0 \\
              \end{array}
            \right].
\end{equation}
These two real counter parts  are similar to each other and have the same  functionality. 
\item They adopt different styles of data motion:
the loads and stores of data are transported by  four $n$-by-$n$ matrices  in  Algorithm \ref{a:hessreduction},
while in Algorithm \ref{a:hessreduction1} by one  $4n$-by-$n$ matrices. 
\end{itemize}
\end{remark}

\begin{remark} 
 Algorithms \ref{a:hessreduction1}-\ref{a:hessreduction} are real structure-preserving methods with calculating the quaternion Hessenberg matrix defined in \cite{bbm89}. 
The calculated quaternion Hessenberg matrix by Algorithm \ref{a:hessreduction1} as well as that in \cite{bbm89} has quaternion elements on the subdiagonal; meanwhile,  the calculated quaternion Hessenberg matrices by Algorithms \ref{a:hessreduction2}  and \ref{a:hessreduction} have positive real numbers on the subdiagonals. 
Algorithm \ref{a:hessreduction2}  is the same as Algorithm \ref{a:hessreduction1}  but with an additional step of rotating the quaternion elements on the subdiagonal  to positive real numbers. 
Computation amounts numbers for the Hessenberg reduction of dense quaternion matrices are listed in the first two columns of Table \ref{t:hess_rehess}.

\begin{table}
\begin{center}
\mbox{ } \caption{Computation amounts and assignment numbers for Hessenberg reduction 
}\label{t:hess_rehess}
\bigskip
\begin{tabular}{l|llll }
Householder  & \multicolumn{2}{c}{Dense Matrix $Q$}       & \multicolumn{2}{c}{Broken Hessenberg matrix $H_F$ }     \\
               &  assignment     &     real flops           &      assignment       &  real flops  \\
\hline
$\mathscr{H}_1$  &   $9n-12$      &${128}n^3/{3}$&  8n-9    &$188n^2$\\
$\mathscr{H}_2$ or $\mathscr{H}_4$&     $14n-17$      &  $ {184}n^3/{3}$&    $13n-14$     &$272n^2$\\
$\mathscr{H}_3$  &      $8n^2+5n-26$   & ${80}n^3/{3}$&  $64n-121$    &$128n^2$\\
\hline
\end{tabular}
\end{center}
\end{table}

\end{remark}

\begin{remark}  In Algorithm \ref{a:hessreduction1} (lines 4-7),  we have improved the line 3 of Algorithm 4.5 in \cite{lwzz16} for multiplication  by  Householder matrices by reducing data motion. 
Remind that data motion is an important factor
when reasoning about performance.
\end{remark}

\subsection{Quaternion Hessenberg  QR}
According to the conventional QR iteration method, the practical QR algorithm of quaternion matrices can be presented as 
\begin{algorithm}[Practical Quaternion QR Algorithm] \label{a:practicalQQR}
 Input 
quaternion matrix $Q=Q_0+Q_1i+Q_2j+Q_3k\in\mathbb{H}^{n\times n}$.
\begin{itemize}
\item[$1.$]  Preliminarily reduce $Q$ to the  Hessenberg form $H$ (e.g., by Algorithm \ref{a:hessreduction}).

\item[$2.$] Until convergence, run
\item[] \qquad \qquad {\rm Factor}\  $H=WR$;  
\item[]\qquad \qquad {\rm Set}\ $H=RW$.
\end{itemize}
\end{algorithm}
\noindent
In general case, the subdiagonal entries of $H$ tends to zero when  proceeding the iteration.
The main work is the QR factorization of the upper Hessenberg matrix $H$.

Now we reduce a quaternion Hessenberg  matrix  into a triangular quaternion matrix by  unitary transformations based on the generalized quaternion Givens  matrices. 
\begin{algorithm}[\bf Quaternion Hessenberg  QR]\label{a:HessQRggivens}
 Given an upper Hessenberg quaternion matrix $H:=[H_0,H_2,H_1,H_3]$, where $H_{0,1,2,3}\in\mathbb{R}^{n\times n}$, the following algorithm overwrites $H$ with an upper triangular quaternion matrix $R:=[R_0,R_2,R_1,R_3]$ which satisfies  $\Upsilon_R=\Upsilon_W^T\Upsilon_H$, where $W:=[W_0,W_2,W_1,W_3]$ is a unitary quaternion matrix.
 \begin{itemize}\setlength{\itemsep}{1pt}
\item[1.]for s=1:n-1
\item[2.] \qquad $x:=H([s,s+1],[s,2*n+s,n+s,3*n+s]$;                              
\item[3.] \qquad     calculate the generalized quaternion  Givens matrix $\mathscr{G}_2$ as in Theorem \ref{t:g-Givens};
\item[4.] \qquad $H([s,s+1],[s,2*n+s,n+s,3*n+s]=\mathscr{G}_2^**H([s,s+1],[s,2*n+s,n+s,3*n+s]$;
\item[5.] end
\end{itemize}
\end{algorithm}
In Algorithm \ref{a:HessQRggivens}, $n-1$ generalized quaternion Givens matrices are calculated.  It needs $69$ real flops and $3$ square root operations to generate each $\mathscr{G}_2$ by equation \eqref{e:g-Givens}  if $x_1$ and $x_2$ are quaternion numbers.
Notice that if  $x_2$ is real,  
at most $48$ flops (at least $33$ flops) can be saved. 
This means  if the inputting quaternion Hessenberg matrix has real subdiagonal entries (i.e., $H_0$ is of upper Hessenberg form and $H_{1,2,3}$
 are upper triangular), then the amount of calculation  can be saved.  
So the cost of Algorithm \ref{a:HessQRggivens} is about $120n^2$ for a quaternion Hessenberg matrix of order $n$. 
If we  use  fast quaternion Givens transformations  instead of the generalized quaternion Givens transformations 
in line 3 of Algorithm \ref{a:HessQRggivens},  the cost of per iteration will rise to  about $148n^2$ for a quaternion Hessenberg matrix of order $n$.

\subsection{The implicit double shift quaternion QR algorithm}\label{ss:qralg}
To ensure rapid convergence of quaternion QR algorithm, we need to shift the eigenvalue. 
Bunse-Gerstner, Byers and Mehrmann \cite{bbm89} pointed that  the single-shift technique cannot choose any nonreal quaternion  as the shift  because of noncommunity of quaternions and  directly  proposed the implicitly double shift QR algorithm. 
They proposed the implicitly double shift QR algorithm directly. 
 \begin{algorithm}[ Implicitly Double Shift Quaternion QR Algorithm \cite{bbm89}]\label{a:QFQR} 
Given  a quaternion matrix $A\in\mathbb{H}^{n\times n}$,
 set $A_0:=U_0^*AU_0$ where $U_0$ is unitary chosen so that $A_0$ is Hessenberg. \\
For $s=0,1,2,\ldots$
 \begin{itemize}\setlength{\itemsep}{1pt}
 \item[1.] Select an approximate eigenvalue $\mu\in\mathbb{H}$.
 \item[2.] Set $A_{k+1}:=Q_k^*A_kQ_k$ where $Q_k$ is unitary chosen so that $Q_k^*(A_k^2-(\mu+\bar{\mu})A_k+\mu\bar{\mu})$ is triangular.
 \end{itemize}
 \end{algorithm}
\noindent
Generally, the $A_k^2-(\mu+\bar{\mu})A_k+\mu\bar{\mu}$ can not be explained as $(A_k-\mu I)(A_k-\bar{\mu} I)$ when the shift $\mu$ is a nonreal quaternion number.

In this section,  
we firstly introduce the implicitly double shift $JRS$-QR algorithm for calculating  real $JRS$-Schur forms of real counterparts of quaternion matrices, and then propose a new and fast implicit double shift quaternion QR algorithm.  
Based on the real structure-preserving methods,
the double shift technique is applied to the real counterpart instead of quaternion matrix itself and the dimension is not expanded. 

\subsubsection{ The implicitly double shift $JRS$-QR algorithm}
Once the upper Hessenberg reduction is completed,  the calculation of 
the real $JRS$-Schur form by the Francis QR step  becomes the main step of solving  the dense unsymmetric eigenproblem.

Firstly, we present the Francis $JRS$-QR step on the unreduced upper $JRS$-Hessenberg matrix $H$. 
\begin{algorithm}[\bf Francis $JRS$-QR step]\label{a:Francisqrstep}
Given the unreduced upper $JRS$-Hessenberg matrix $H\in\mathbb{R}^{4n\times 4n}$ and $s,t\in\mathbb{R}$, 
 this algorithm overwrite $H$ with $W_F^THW_F$, where $W_F$  is a orthogonal $JRS$-symplectic matrix. 
\begin{itemize}\setlength{\itemsep}{1pt}
\item[1.] m=n-1;
\item[2.] F=H(\texttt{in}(1,3),:)*H(:,\texttt{id}(1))-s*H(\texttt{in}(1,3),\texttt{id}(1))+t*[[1;0;0],0,0,0]; (see definitions in \eqref{e:idin})
\item[3.] for k=1:n-2  
\item[4.]\qquad       $W_F$= \texttt{house}(F);    ( the  function  \texttt{house} is defined by \eqref{e:house} )
\item[5.]\qquad        q=\texttt{max}(1,k-1);
\item[6.]\qquad       H(\texttt{in}(k,k+2),\texttt{in}(q,n))=$W_F^T$ *H(\texttt{in}(k,k+2),\texttt{in}(q,n));
\item[7.]\qquad        r=\texttt{min}(k+3,n);
\item[8.]\qquad       H(\texttt{in}(1,r),\texttt{in}(k,k+2))= H(\texttt{in}(1,r),\texttt{in}(k,k+2))*$W_F$;
\item[9.]\qquad        if $k<n-2$ 
\item[10.]\qquad\qquad            F=H(\texttt{in}(k+1,k+3),\texttt{id}(k));
\item[11.]  \qquad      end
\item[12.]   end
\item[13.]  $W_F$= \texttt{house}(H(\texttt{in}(n-1,n),\texttt{id}(n-2)));  
\item[14.]  H(\texttt{in}(n-1,n),\texttt{in}(n-2,n))=$W_F^T$ *H(\texttt{in}(n-1,n),\texttt{in}(n-2,n));
\item[15.]  H(\texttt{in}(n-2,n),\texttt{in}(n-1,n))= H(\texttt{in}(n-2,n),\texttt{in}(n-1,n))*$W_F$;
\item[16.]  $W_F$= \texttt{house}(H(\texttt{id}(n),\texttt{id}(n-1))); 
\item[17.]   H(\texttt{id}(n),\texttt{in}(n-1,n))=$W_F^T$ *H(\texttt{id}(n),\texttt{in}(n-1,n));
\item[18.]    H(\texttt{in}(n-1,n),\texttt{id}(n))=H(\texttt{in}(n-1,n),\texttt{id}(n))*$W_F$;
\end{itemize}
\end{algorithm}    
This algorithm requires $138n^2$ flops. If  $W_F$ is accumulated  into a given orthogonal matrix,  additional $138n^2$ flops are necessary. 
Steps 16-18 are  to delete the nonzero $(n,n-1)$-element of $H_1$, $H_2$ and $H_3$. 
Algorithm \ref{a:Francisqrstep} can preserve the upper  $JRS$-Hessenberg form defined by \eqref{d:hess}.
Notice that if we use  the MATLAB order \texttt{hess} on $M$, the resulted Hessenberg form is not $JRS$-symmetric.

\begin{remark}
In Algorithm \ref{a:Francisqrstep}, we are in essence processing the Hessenberg reduction of the broken quaternion Hessenberg matrix, of which the submatrix of first four rows and three columns no longer has upper Hessenberg form. 
Since only two elements are need to be cancelled, the Householder matrix is $3$-by-$3$, and so the  processing totally  needs  $O(n^2)$ flops.
  The computational counts are listed in the last two columns of Table \ref{t:hess_rehess}.
\end{remark}

During the iteration in  Francis $JRS$-QR step,  it is necessary to monitor the subdiagonal elements in $H_0$ in order to spot any possible decoupling.
We illustrate how to do this  in the following algorithm.
\begin{algorithm}[\bf Real $JRS$-Schur form of a real upper $JRS$-Hessenberg matrix]\label{a:realschur}
Given a real upper $JRS$-Hessenberg matrix $H\in\mathbb{R}^{4n\times 4n}$ and a tolerance \texttt{tol} greater than the unit roundoff,  
this algorithm computes the real $JRS$-Schur canonical form  $W^THW=T$, where $W$ is orthogonally $JRS$-symplectic. 
\begin{itemize}
\item[1.]  {\bf while} $q<n$
\item[2.] \quad Set to zero all subdiagonal elements of $H_0=H(1:n,1:n)$ that satisfy: 
           $$|H_0(i,i-1)|<\texttt{tol}(\|H(i,\texttt{id}(i-1))\|_2+\|H(i-1,\texttt{id}(i))\|_2);$$
 \item[3.] \quad Find the largest nonnegative integer $q$ and the smallest non-negative integer $p$ such
        that 
$$H_0=\begin{array}{cc}
\left[\begin{array}{ccc}
H_{11}&  H_{12}&  H_{13} \\
0&  H_{22}&  H_{23} \\
0&  0&  H_{33} \end{array}\right]
&
\begin{array}{l}
p \\
n-p-q\\
q
\end{array}
     \end{array}$$
     \quad    where $H_{33}$ is upper quasi-triangular and $H_{22}$ is unreduced.
 \item[4.] \quad  If $q<n$, perform a Francis  $JRS$-QR step (Algorithm \ref{a:Francisqrstep}) on the unreduced upper $JRS$-Hessenberg matrix  $H(\texttt{in}(p+1,n-q),\texttt{in}(p+1,n-q))$:
     \begin{eqnarray*}   
                H(\texttt{in}(p+1,n-q),\texttt{in}(p+1,n-q))&=&W_F^TH(\texttt{in}(p+1,n-q),\texttt{in}(p+1,n-q))W_F,\\
           H(1:p,\texttt{in}(p+1:n-q))&=&H(1:p,\texttt{in}(p+1:n-q))W_F,\\
           H(p+1:n-q,\texttt{in}(n-q+1,n))&=&W_F^TH(p+1:n-q,\texttt{in}(n-q+1,n)).
\end{eqnarray*}
\item[5.]  {\bf end}
\end{itemize}
\end{algorithm}
Based on the empirical observation that average only two Francis iterations are required before the lower $1$-by-$1$ or $2$-by-$2$ decouples, 
this algorithm approximately requires  $106\frac{2}{3}n^3$ flops if only the eigenvalues are desired.
 If $W$ and $T$ are computed,  then $325\frac{1}{3}n^3$ flops are necessary. 
 \begin{remark}
 If we use the traditional Francis QR step instead of the Francis  $JRS$-QR step in line 4, then the flops count for computing $T$ and $W$ will rise to  $1600n^3$.  It is worse that $W$ and $T$ will  no longer be $JRS$-symmetric and the storage space will be multiplied four times.
 \end{remark}

\subsubsection{Implicitly Double Shift Quaternion QR Algorithm}
Based on Theorem \ref{t:JRSrealq}, we can develop an implicit double shift quaternion QR algorithm with the help of the $JRS$-symmetric theory and algorithms.  
\begin{algorithm}[\bf Implicitly Double Shift Quaternion QR Algorithm]\label{a:schurform}
Given a quaternion matrix $Q:=[Q_0,Q_2,Q_1,Q_3]$, where $Q_{0,1,2,3}\in\mathbb{R}^{n\times n}$, the following algorithm overwrites $Q$ with the  quasi upper-triangular  Schur matrix  $T:=[T_0,T_2,T_1,T_3]$ which satisfies  $T=W^*QW$, where $W:=[W_0,W_2,W_1,W_3]$ is a unitary quaternion matrix.
\begin{itemize}
\item[1.] Apply Algorithm \ref{a:hessreduction}  to calculate the Hessenberg  form  $\widehat{W}^*Q\widehat{W}=H:=[H_0,H_2,H_1,H_3]$ of the quaternion matrix $Q$, where $\widehat{W}:=[\widehat{W}_0,\widehat{W}_2,\widehat{W}_1,\widehat{W}_3]$ is a unitary quaternion matrix.
\item[2.] Utilize Algorithm \ref{a:realschur} to calculate the  quasi upper-triangular Schur canonical form  $\widetilde{W}^*H\widetilde{W}=T:=[T_0,T_2,T_1,T_3]$ of the quaternion Hessenberg  matrix $H$, where $\widetilde{W}:=[\widetilde{W}_0,\widetilde{W}_2,\widetilde{W}_1,\widetilde{W}_3]$ is a unitary quaternion matrix.
\item[3.] Calculate $W=\widehat{W}*\widetilde{W}$.
\end{itemize}
\end{algorithm}

\begin{remark}Bunse-Gerstner, Byers and Mehrmann \cite{bbm89} straightly suggested to  replace  $H$ by $M=H^2-(\kappa +\overline{\kappa})H+\kappa \overline{\kappa}I$ in the quaternion QR step.  The supporting  theory is applying two steps of shifted QR iteration applied to the real counterpart  $\Upsilon_H$, which is $JRS$-symmetric; see Section \ref{s:JRSHessQR}.  
Since   $\kappa +\overline{\kappa}$ and $\kappa \overline{\kappa}$ are real, if $(\lambda, x)$ is an eigenpair of $H$ then $(\lambda^2-(\kappa +\overline{\kappa})\lambda+\kappa \overline{\kappa}, x)$  is an eigenpair of $M$. 
\end{remark}
\begin{remark}
The eigenvectors of the original quaternion matrix $Q$ can be found by computing the eigenvectors of the  quasi upper-triangular Schur  matrix $T$ produced by Algorithm \ref{a:schurform},  and  transforming them back under
the unitary quaternion transformation $W$. 
Thus the problem of finding the eigenvectors of the
original quaternion matrix $Q$ is reduced to computing the eigenvectors of a quasi-triangular quaternion matrix
$T$.  We will study this project in further.
\end{remark}

The main differences between  Algorithm  \ref{a:schurform}  and Algorithm A5 in \cite{bbm89}  are as follows.
\begin{itemize}
\item[(1)]By Algorithm \ref{a:schurform}, the calculated Hessenberg matrix $H$ in step 1 has real subdiagonal entries, and this structure is preserved  in step 2 (see steps 4-5 in Algorithm \ref{a:realschur}); and hence, the subdiagonal entries of the resulted quasi upper-triangular Schur form are real.  
 The subdiagonal entries of the calculated Hessenberg  form by  Algorithm A5 in \cite{bbm89}  are not necessary to be real.
\item[(2)]In  Algorithm \ref{a:schurform}, the smallest magnitude eigenvalues of the $2$-by-$2$ right-down submatrix  of the unreduced Hessenberg quaternion matrix and its conjugate are chosen  as the double shifts, while the last diagonal element and its conjugate are chosen in \cite[Algorithm A5]{bbm89}.
\item[(3)] The calculation of Algorithm \ref{a:schurform} is only in real arithmetic, while  Algorithm A5 in \cite{bbm89}  runs in quaternion operations.
\end{itemize}

\section{Numerical experiment}\label{s:numex}
In this section we present four numerical examples to compare the efficiency of newly proposed algorithms
 with the state-of-the-art algorithms.
All  numerical experiments are performed on a personal computer with   
 2.4GHz  Intel Core i7 and  8GB 1600 MHz DDR3, 
 and all codes are written in MATLAB using MATLAB version 9.0.0.321247 (2016a).

\begin{example}[\bf Upper Hessenberg  Reduction of Quaternion Matrices]\label{ex:hessQ}
Suppose that 
$$M=M_0+M_1i+M_2j+M_3k:=[M_0,M_2,M_1,M_3]$$
 is a Toeplitz quaternion matrix, where $M_{0,1,2,3}$ are real matrices of order $n$, generated by the Matlab order \texttt{teoplitz} as  
$$M_0=\texttt{teoplitz}(C,R),
M_1=\texttt{teoplitz}(R),
M_2=\texttt{teoplitz}(C),
M_3=\texttt{teoplitz}(R,C),$$ 
with $C=[n, 1:n, n]$ and  $R=C(n:-1:1)$.
For n=100:100:2000, we compare the numerical efficiency of  the following algorithms on Hessenberg reduction:
\begin{itemize}
\item   \texttt{hessq}: Algorithm A3 in \cite{bbm89} based on the quaternion Householder-based transformation(\cite[Algorithms A2]{bbm89}) and using quaternion toolbox \cite{steve-toobox};
\item \texttt{hessQH1}:  based  on the  quaternion Householder-based transformation $\mathscr{H}_1$ in \cite{lwzz16};   
\item \texttt{hessQH2}: based on the quaternion Householder-based transformation $\mathscr{H}_2$ or $\mathscr{H}_4$ in \cite{lwzz16};
\item \texttt{hessQH1im}: Algorithm \ref{a:hessreduction1};  
\item\texttt{hessQH2im}: Algorithm \ref{a:hessreduction2}; 
\item\texttt{hessQH3}: Algorithm \ref{a:hessreduction}. 
\end{itemize}
In the left figure of  Figure \ref{f:hess}, the CPU times costed by six algorithms  are for the calculation of the Hessenberg  form $H:=[H_0,H_2,H_1,H_3]$ 
  and the unitary matrix $W:=[W_0,W_2,W_1,W_3]$. 
In  the right figure of  Figure \ref{f:hess},  the relative error is  defined as
\begin{equation*}\label{e:residual}
Re=\frac{\|\texttt{tril}(H_0,-2)\|_F+\sum_{s=1}^3\|\texttt{tril}(H_s,-1)\|_F}{\|[H_0,H_2,H_1,H_3]\|_F}.
\end{equation*} 
\begin{figure}\label{f:hess}
  \begin{center}
\includegraphics[width=6cm,height=4.5cm]{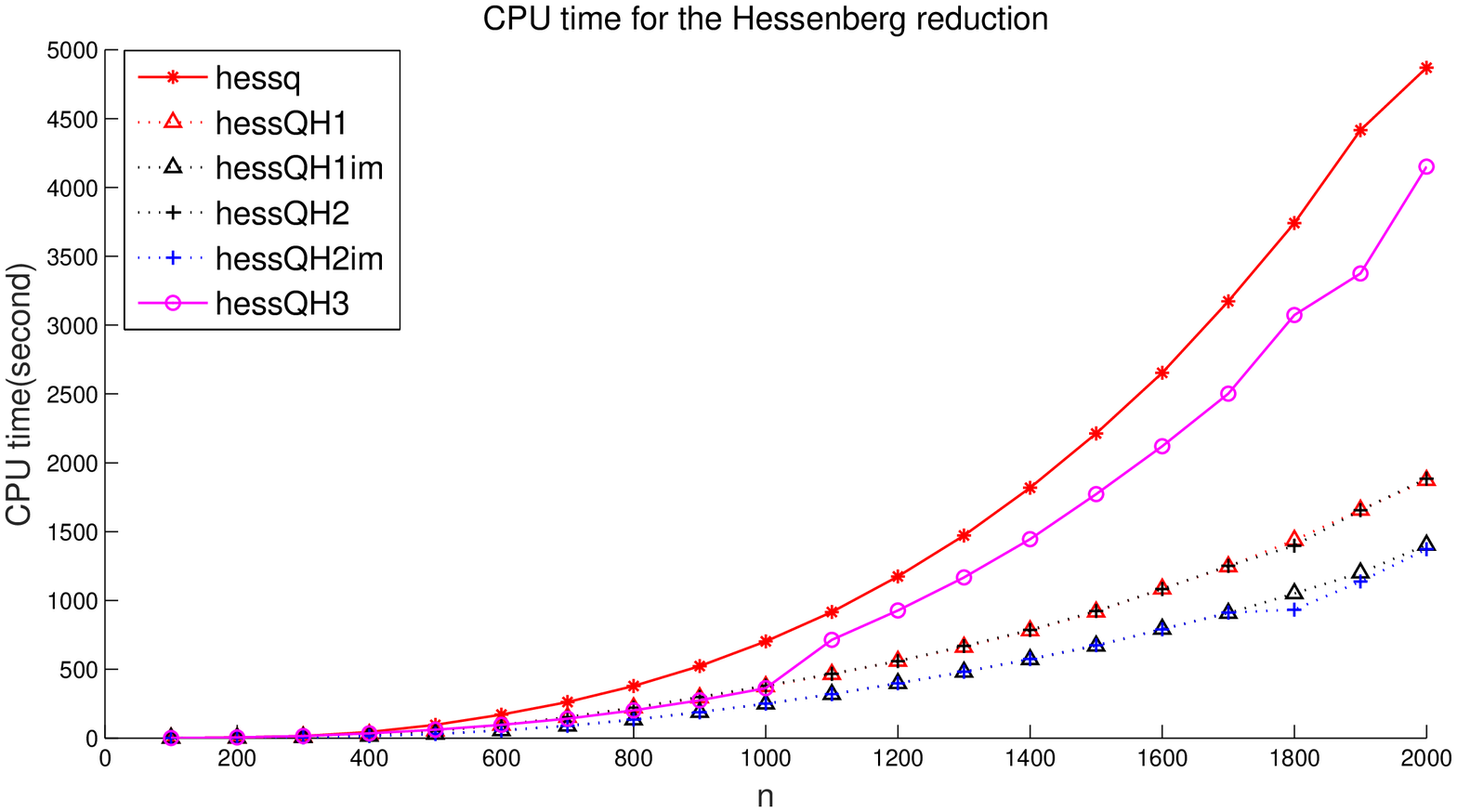}
\includegraphics[width=6cm,height=4.5cm]{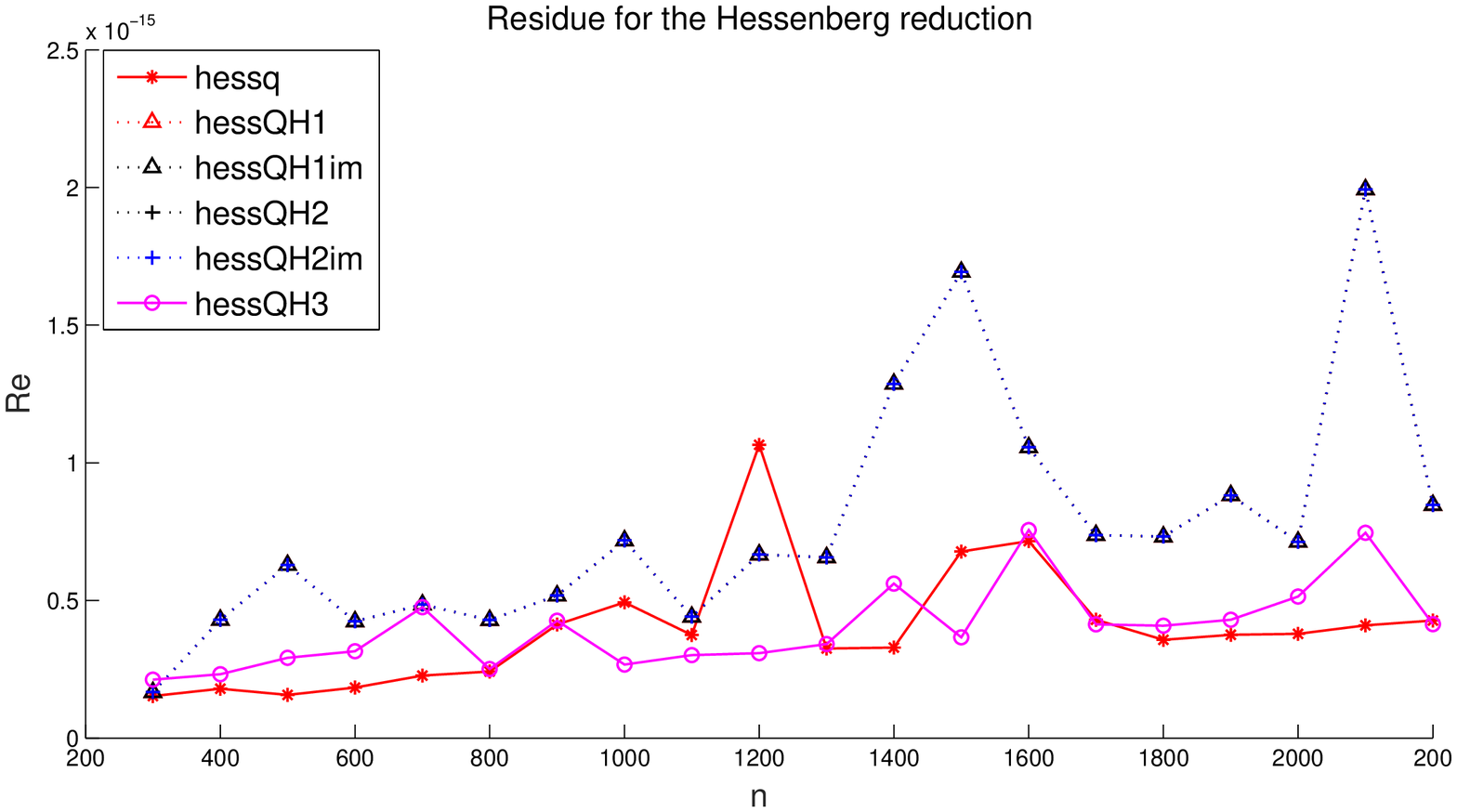}
  \end{center}
  \caption{The CPU times (seconds) and the relative residuals for quaternion Hessenberg reduction}\label{f:eig2}
\end{figure}
 Figure \ref{f:hess} indicates that  
 \begin{itemize}
 \item  when the dimension is large,  the real structure-preserving algorithms cost less CPU times than the algorithms based  on quaternion operations;  
 \item Algorithm \ref{a:hessreduction1} and  Algorithm \ref{a:hessreduction2} generally are faster  than   the Hessenberg reduction algorithms based on the Householder-based transformations $\mathscr{H}_1$, $\mathscr{H}_2$ and $\mathscr{H}_4$ in \cite{lwzz16};
 \item  and  the residue of  Algorithm \ref{a:hessreduction} is generally smaller than those  of Algorithm \ref{a:hessreduction1} and  Algorithm \ref{a:hessreduction2}.
 \end{itemize}
\end{example}

\begin{example}[\bf QR Decompositions of Quaternion Hessenberg  Matrices]\label{ex:hessQR} %
Suppose that 
$$H=H_0+H_1i+H_2j+H_3k:=[H_0,H_2,H_1,H_3]$$
is a random upper  quaternion Hessenberg matrix with the real counterpart  
 $JRS$-symmetric, where $H_{0,1,2,3}\in\mathbb{R}^{n\times n}$.
For n=100:100:4000, we compare the numerical efficiency of the following two quaternion Givens transformations  on the QR decomposition of $H$:
\begin{itemize}
\item \texttt{FGivensQ}: applying fast Givens transformation in \cite{joap05};
\item \texttt{GGivensQ}:  Algorithm \ref{a:HessQRggivens}. %
\end{itemize}

 In the left figure of  Figure \ref{f:hessQR}, the CPU times costed by two algorithms \texttt{FGivensQ} and \texttt{GGivensQ} are for the calculation of the upper $JRS$-triangular  matrix $R:=[R_0,R_2,R_1,R_3]$ 
  and the $Q$ factor $W:=[W_0,W_2,W_1,W_3]$.
In the right figure of  Figure \ref{f:hessQR},  the relative residual is  defined as
\begin{equation*}\label{e:ERR}
Re=\frac{\|A-WR\|_F}{\|A\|_F}.
\end{equation*}   
\begin{figure}\label{f:hessQR}
  \begin{center}
\includegraphics[width=6cm,height=4.5cm]{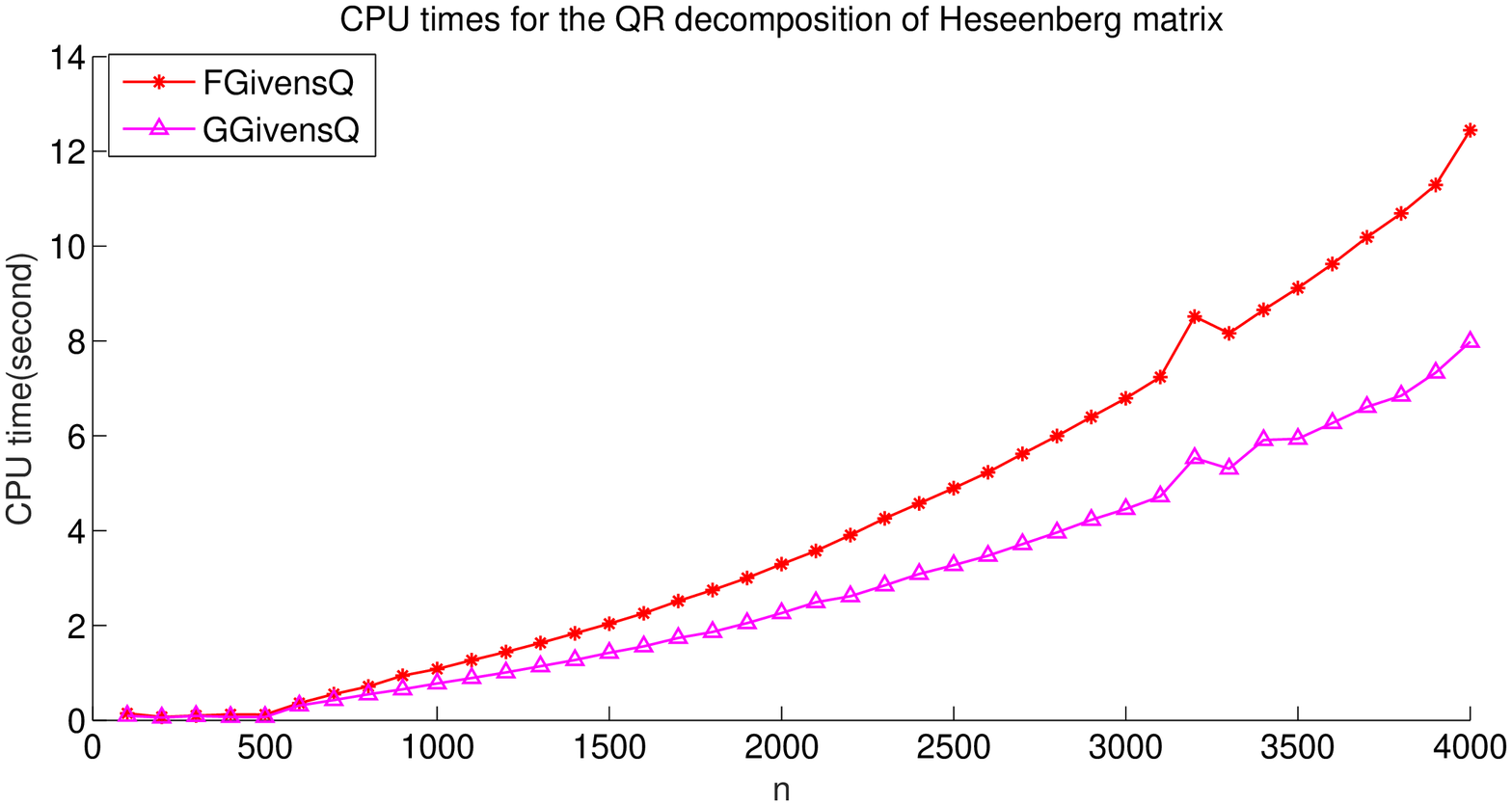}
\includegraphics[width=6cm,height=4.5cm]{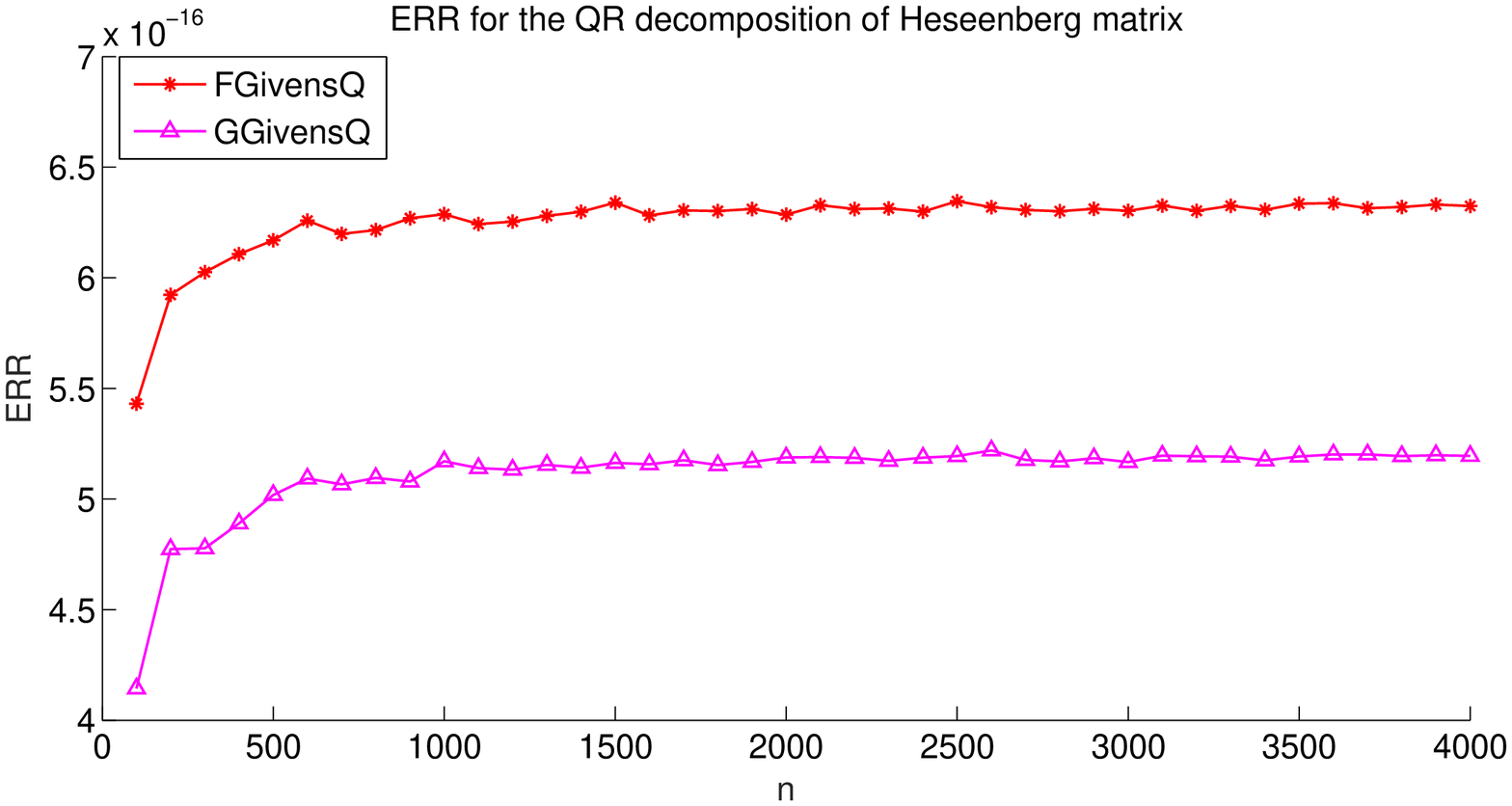}
  \end{center}
  \caption{The CPU times (seconds)  and the relative residuals for quaternion Hessenberg reduction}\label{f:eig2}
\end{figure}

From the numerical results in Figure \ref{f:hessQR}, we can see that  when the dimension is very large  \texttt{GGivensQ} is faster than \texttt{FGivensQ} and the relative residual of \texttt{GGivensQ} is smaller. 
\end{example}
\begin{example} [\bf Hessenberg reduction of $H_F$]\label{ex:PHPhess} 
Suppose that 
$$H_F=H^F_0+H^F_1i+H^F_2j+H^F_3k:=[H^F_0,H^F_2,H^F_1,H^F_3]$$
 is the broken Hessenberg  quaternion matrix in Francis QR step,   where $H^F_{0,1,2,3}$ are $n$-by-$n$ real matrices  as defined in Section \ref{s:doubleim}.
For n=4000:100:7000, we compare the numerical efficiency of  the following  algorithms on Hessenberg reduction of $H^F$:
Algorithm \ref{a:hessreduction1}(\texttt{hessQ1im}), 
Algorithm \ref{a:hessreduction2}(\texttt{hessQH2im}),
Algorithm \ref{a:hessreduction}(\texttt{hessQH3}),
the Hessenberg reduction based on fast Givens transformation (\texttt{hessQ-FGivensQ}),
 and Algorithm \ref{a:HessQRggivens} (\texttt{hessQ-GGivensQ}). 
In the left figure of  Figure \ref{f:PHPhess}, the CPU times costed by four algorithms  are for the calculation of the upper $JRS$-Hessenberg  form $\widehat{H}:=[\widehat{H}_0,\widehat{H}_2,\widehat{H}_1,\widehat{H}_3]$
  and the orthogonally $JRS$-symplectic matrix $\widehat{W}:=[\widehat{W}_0,\widehat{W}_2,\widehat{W}_1,\widehat{W}_3]$. 
In  the right figures of  Figure \ref{f:PHPhess},  the backward error is  defined as
\begin{equation*}\label{e:residual}
ERR=||H_F\widehat{W}-\widehat{W}\widehat{H}||_F.
\end{equation*}
\begin{figure}\label{f:PHPhess}
  \begin{center}
\includegraphics[width=6cm,height=4.5cm]{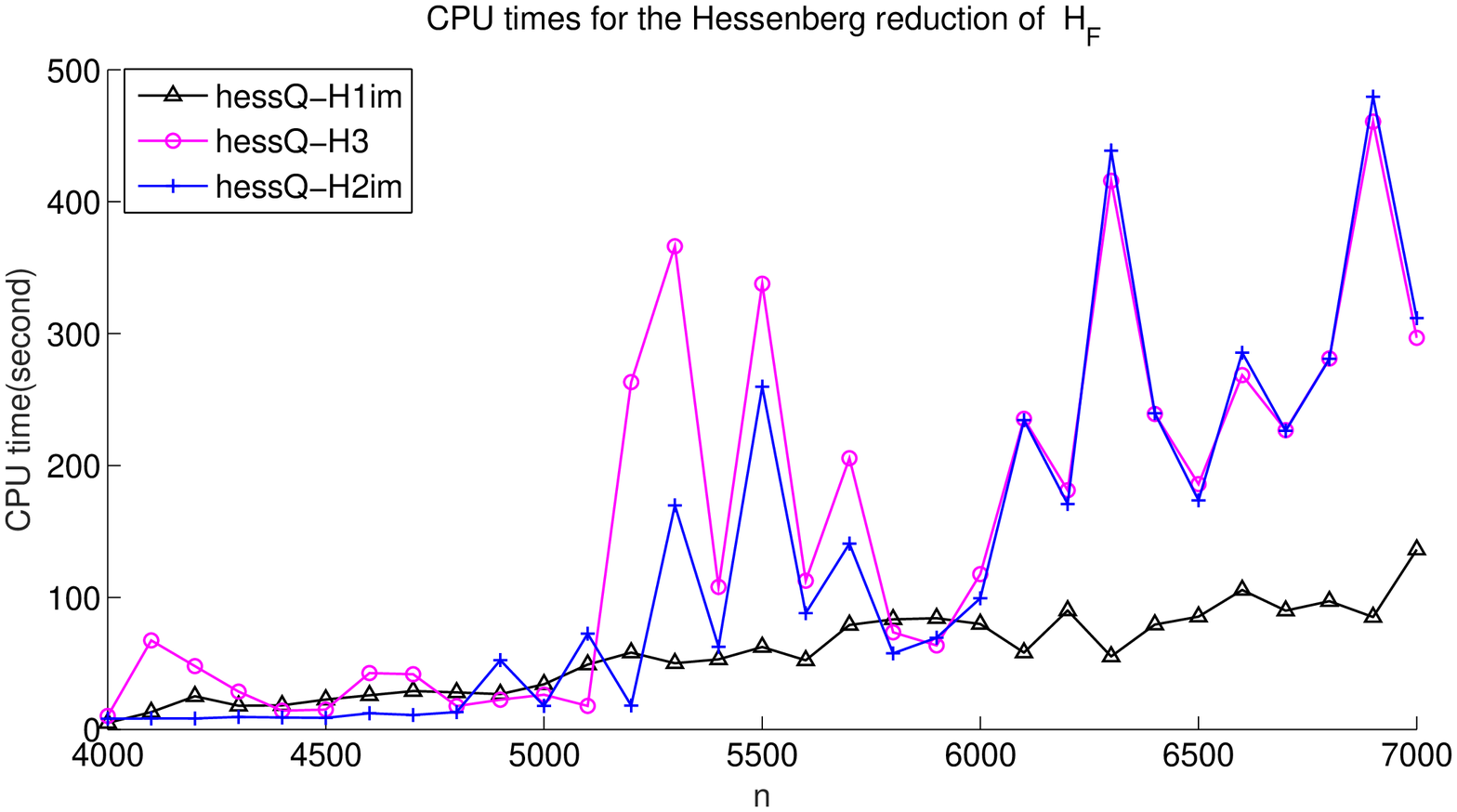}
\includegraphics[width=6cm,height=4.5cm]{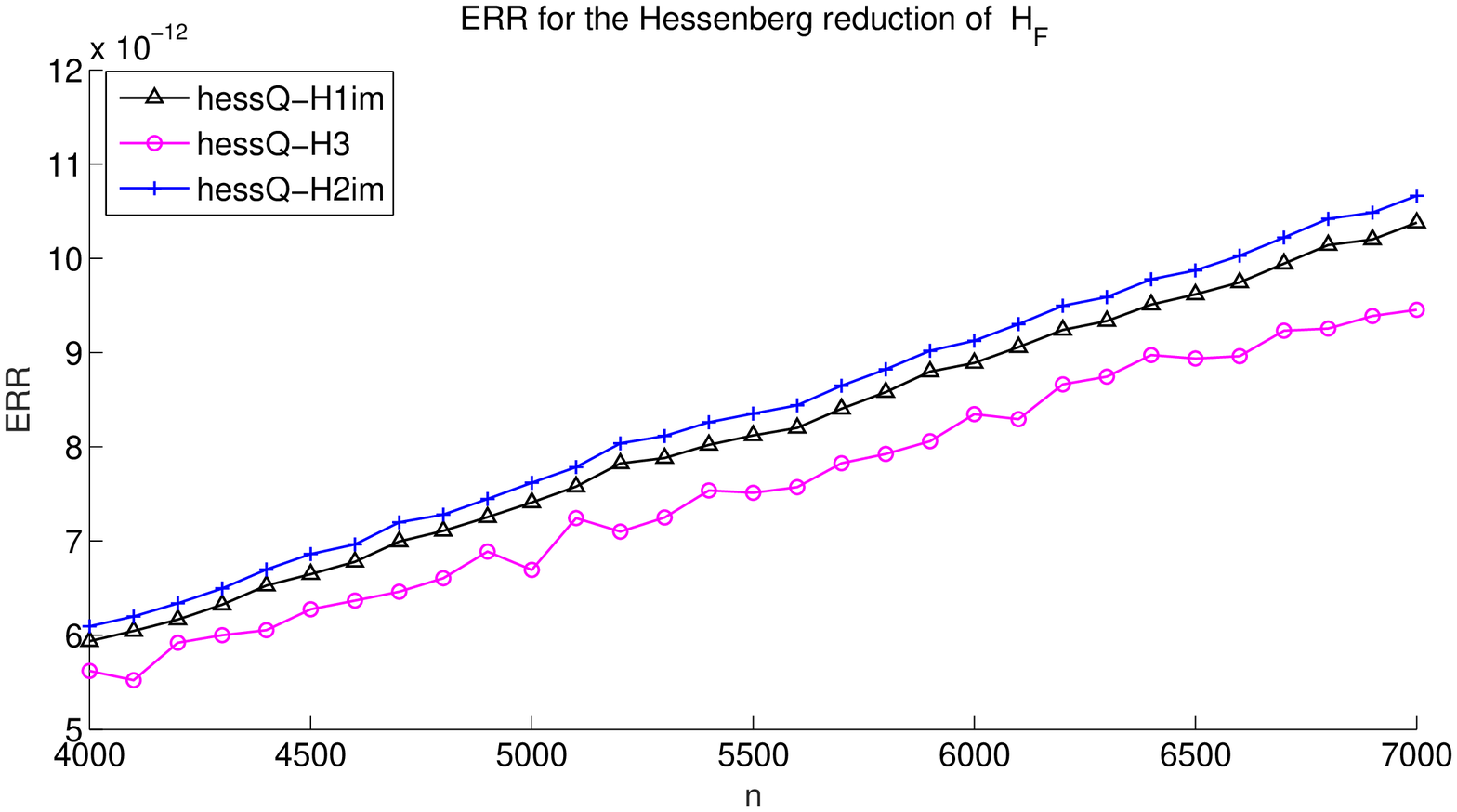}
\includegraphics[width=6cm,height=4.5cm]{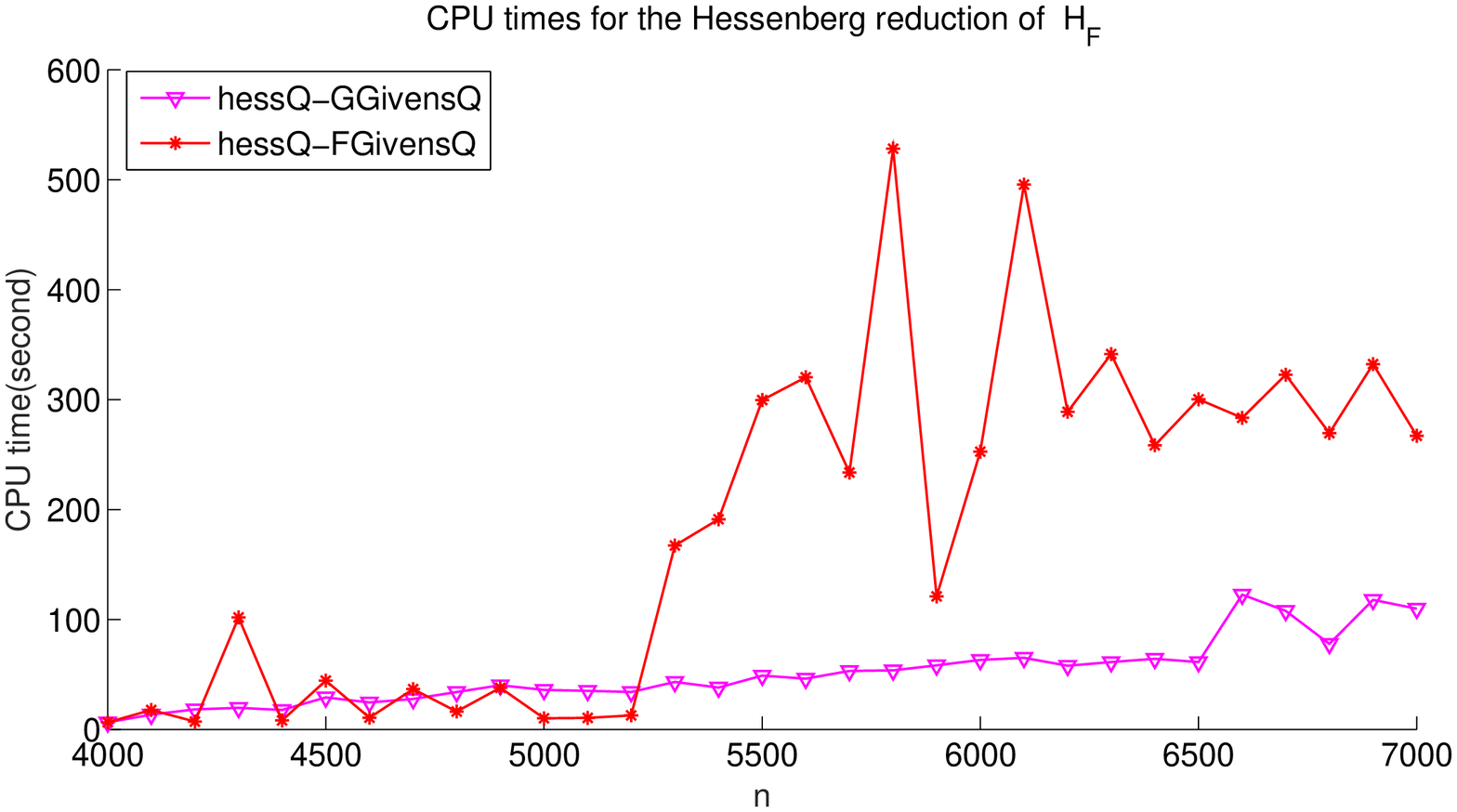}
\includegraphics[width=6cm,height=4.5cm]{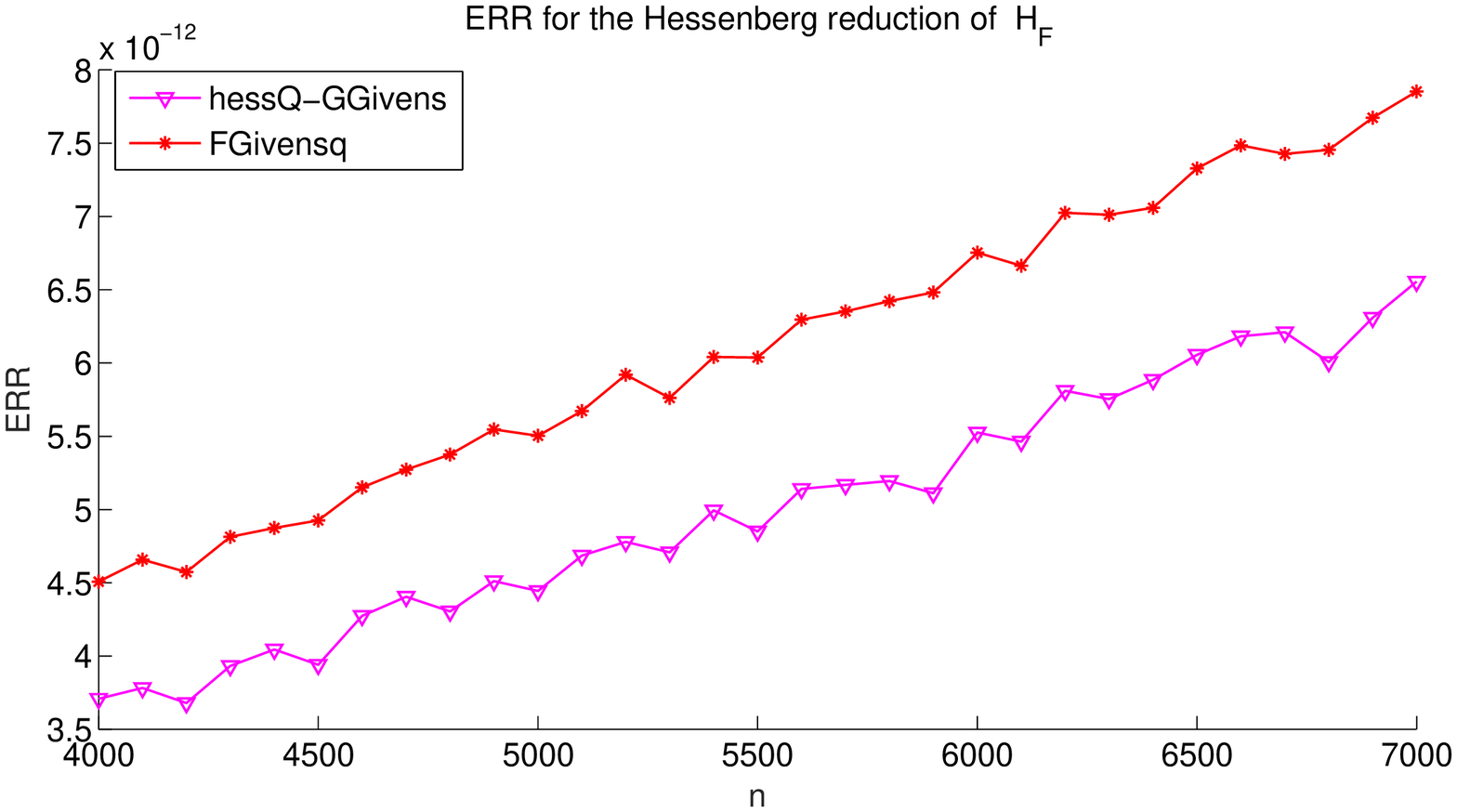}
  \end{center}
  \caption{The CPU times (seconds) and the relative residuals for quaternion Hessenberg reduction}\label{f:eig2}
\end{figure}
\end{example}

\begin{example}[\bf Schur Decompositions of Quaternion Matrices]\label{ex:Schur} 
A newly proposed technique of  the copyright protection of color image is  the blind watermarking scheme based on Schur decomposition.
The features obtained by  Schur decomposition are used for embedding watermark and extracting watermark in the blind manner. 
These watermarking algorithms have  a very good performance, such as in the aspects of the invisibility, robustness, computational complexity, security, capacity etc.; see \cite{such16} for more details.

We apply Algorithm \ref{a:schurform} to compute the quasi upper-triangular Schur decompositions of purely imaginary quaternion matrices denoting color images.
The color image for testing is the standard Lena image of order $512$, denoted by $M=M_1i+M_2j+M_3k:=[0,M_2,M_1,M_3]$, where all elements of $M_{1,2,3}\in\mathbb{R}^{512\times 512}$ are nonnegative but not bigger than $1$.

Let $n$ denote the order of the principle submatrix of $M$. 
For n=12:10:512, we compare the numerical efficiency of two QR algorithms with different kinds of shift:
\begin{itemize}
\item Quaternion QR Algorithm \cite[Algorithm A5]{bbm89}  (QRASq);
\item Algorithm \ref{a:schurform}  with the shift suggested in Section \ref{ss:qralg} (QRASQ).
\end{itemize}

The CPU times reported in Figure \ref{f:schur}  are for the calculation of the $JRS$-Schur  form $T:=[T_0,T_2,T_1,T_3]$ 
 and the orthogonally $JRS$-symplectic matrix  $W$.
\begin{figure}\label{f:schur}
  \begin{center}
  \includegraphics[width=6cm]{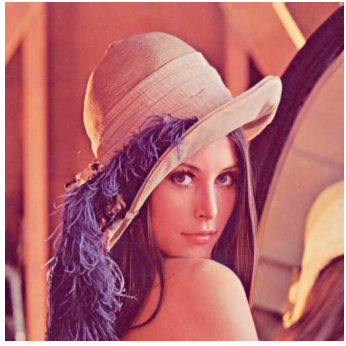}
\includegraphics[width=6cm]{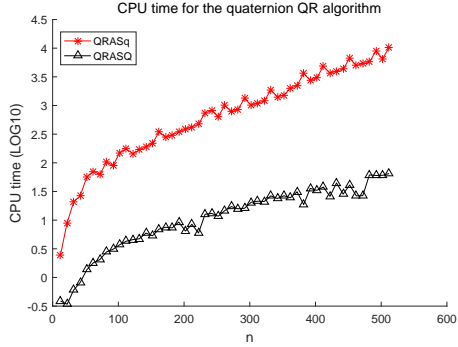}
  \end{center}
  \caption{Lena image and the CPU times (seconds) for Schur decompositions }\label{f:eig2}
\end{figure}
\end{example}

\section{Conclusion}\label{s:conclusion}
A  structure-preserving 
QR algorithm  is presented to calculate the quasi upper-triangular Schur forms of  quaternion matrices. 
The strategy is to preserve the algebraic symmetry of the real counterpart in the processing  and to be in real arithmetic.  
The storage and cost of the newly proposed algorithm are  reduced to 
 the same level of the traditional QR  algorithm in quaternion arithmetic with same  accuracy and stability.
The main contribution of this paper can be concluded as follows.
\begin{itemize}
  \item  
  Prove that once the first column of each block  of the orthogonally $JRS$-symplectic reduction matrix is decided,  the upper $JRS$-Hessenberg form is unique under the  similarity transformation by a diagonal matrix; propose the Francis $JRS$-QR step and a  QR algorithm for computing the real $JRS$-Schur form with preserving the upper $JRS$-Hessenberg structure. 
  \item   Define a novel quaternion Givens transformation and apply it to compute the QR decomposition of quaternion Hessenberg matrix;
  develop a new implicit double shift quaternion QR algorithm which only executes real operations and preserves the structures of quaternion matrices.
  \item The newly  proposes  real structure-preserving quaternion QR algorithm only need to store the real part and three imaginary parts and apply real operations on them directly. We are sure that this is a novel method of computing the right eigenvalues of general quaternion matrices. 
\end{itemize}
Numerical examples show that the newly proposed algorithms are fast and reliable, and that  the larger the dimension of the problem,   the better are they than the state-of-the-art algorithms.\\

\end{document}